\documentclass[11pt,letterpaper]{amsart}

\usepackage[margin=1.08in]{geometry}
\usepackage{amsmath,amssymb,amsthm,mathtools}
\usepackage{microtype}
\usepackage[T1]{fontenc}
\usepackage{libertinus}
\usepackage{enumitem}
\usepackage{booktabs}
\usepackage{tabularx}
\usepackage[dvipsnames]{xcolor}
\usepackage{array}
\usepackage{float}
\usepackage{tikz}
\usetikzlibrary{patterns,calc,positioning,arrows.meta}
\usepackage{cite}
\usepackage{hyperref}
\usepackage{aliascnt}
\usepackage[nameinlink,capitalize]{cleveref}

\hypersetup{
  colorlinks=true,
  linkcolor=black,
  citecolor=black,
  urlcolor=black,
  pdftitle={Sparse KSZ and Weighted Hardy--Littlewood Inequalities},
  pdfauthor={Anderson Barbosa, Daniel Nunez-Alarcon, Anselmo Raposo, and Joedson Santos}
}
\setlist{itemsep=0.25em,topsep=0.45em}
\allowdisplaybreaks
\numberwithin{equation}{section}
\definecolor{gmmGold}{HTML}{F2C94C}
\definecolor{gmmRose}{HTML}{C96B8D}
\definecolor{gmmCoral}{HTML}{F2997A}
\definecolor{gmmIndigo}{HTML}{6C75D8}
\definecolor{gmmOlive}{HTML}{B7B86A}
\definecolor{gmmViolet}{HTML}{8B6FD6}
\definecolor{ourTeal}{HTML}{169C95}
\definecolor{axisInk}{HTML}{27313A}
\colorlet{gmmGold}{black!16}
\colorlet{gmmRose}{black!24}
\colorlet{gmmCoral}{black!30}
\colorlet{gmmIndigo}{black!38}
\colorlet{gmmOlive}{black!45}
\colorlet{gmmViolet}{black!52}
\colorlet{ourTeal}{black!68}
\colorlet{axisInk}{black}

\newtheorem{theorem}{Theorem}[section]

\newaliascnt{proposition}{theorem}
\newtheorem{proposition}[proposition]{Proposition}
\aliascntresetthe{proposition}

\newaliascnt{lemma}{theorem}
\newtheorem{lemma}[lemma]{Lemma}
\aliascntresetthe{lemma}

\newaliascnt{corollary}{theorem}
\newtheorem{corollary}[corollary]{Corollary}
\aliascntresetthe{corollary}

\theoremstyle{definition}
\newaliascnt{definition}{theorem}
\newtheorem{definition}[definition]{Definition}
\aliascntresetthe{definition}

\theoremstyle{remark}
\newaliascnt{remark}{theorem}
\newtheorem{remark}[remark]{Remark}
\aliascntresetthe{remark}

\crefname{theorem}{theorem}{theorems}
\Crefname{theorem}{Theorem}{Theorems}
\crefname{proposition}{proposition}{propositions}
\Crefname{proposition}{Proposition}{Propositions}
\crefname{lemma}{lemma}{lemmas}
\Crefname{lemma}{Lemma}{Lemmas}
\crefname{corollary}{corollary}{corollaries}
\Crefname{corollary}{Corollary}{Corollaries}
\crefname{definition}{definition}{definitions}
\Crefname{definition}{Definition}{Definitions}
\crefname{remark}{remark}{remarks}
\Crefname{remark}{Remark}{Remarks}

\newcommand{\R}{\mathbb R}
\newcommand{\C}{\mathbb C}
\newcommand{\K}{\mathbb K}
\newcommand{\N}{\mathbb N}
\newcommand{\T}{\mathbb T}
\newcommand{\cK}{\mathcal K}
\newcommand{\sK}{\mathsf K}
\newcommand{\cL}{\mathcal L}
\newcommand{\eps}{\varepsilon}
\newcommand{\pos}[1]{\bigl(#1\bigr)_{+}}

\newcommand{\Wfun}{\mathcal W}
\newcommand{\Nzero}{\mathbb N_0}
\newcommand{\supp}{\operatorname{supp}}

\title[Sparse KSZ and weighted Hardy--Littlewood inequalities]
{Sparse Kahane--Salem--Zygmund Forms and Weighted Hardy--Littlewood Inequalities Across the Critical Endpoint}

\author[A. Barbosa]{Anderson Barbosa}
\address{Universidade Federal Rural do Semi-\'Arido (UFERSA), Rio Grande do Norte, Brazil}
\email{anderson.barbosa@ufersa.edu.br}

\author[D. N\'u\~nez-Alarc\'on]{Daniel N\'u\~nez-Alarc\'on}
\address{Departamento de Matem\'aticas, Facultad de Ciencias, Universidad Nacional de Colombia, Sede Bogot\'a, Bogot\'a, Colombia}
\email{dnuneza@unal.edu.co}

\author[A. Raposo]{Anselmo Raposo}
\address{Departamento de Matem\'atica, Universidade Federal do Maranh\~ao, S\~ao Lu\'is, MA, Brazil}
\email{anselmo.junior@ufma.br}

\author[J. Santos]{Joedson Santos}
\address{Departamento de Matem\'atica, Centro de Ci\^encias Exatas e da Natureza, Universidade Federal da Para\'iba, Jo\~ao Pessoa, PB, Brazil}
\email{joedson.santos@academico.ufpb.br}

\subjclass[2020]{Primary 46G25; Secondary 32A05, 46B28, 46M35}
\keywords{Hardy--Littlewood inequality, Kahane--Salem--Zygmund inequality, weighted coefficient inequality, coefficient norms, homogeneous polynomials, critical endpoint}
\date{}

\begin{document}

\begin{abstract}
We study sparse Kahane--Salem--Zygmund constructions and weighted Hardy--Littlewood inequalities for homogeneous polynomials. For supports of cardinality $n^{d+o(1)}$, we determine the sharp power of $n$ governing the smallest norm of a unimodular $m$-linear form on $\ell_{p_1}^n\times\cdots\times\ell_{p_m}^n$; in the diagonal case, this yields the missing polynomial growth exponent in the coefficient-versus-supremum norm problem for $2\le p\le m$ and $2\le r\le\infty$. We then introduce a diagonal weighted Hardy--Littlewood functional which, on $s=p\ge m$, agrees exactly with the classical Hardy--Littlewood coefficient norm with the same optimal constant. We determine the optimal diagonal weight exponent for $2\le p\le m$ and $1\le q\le2$, on the full critical line $p=m$, and on a sharp part of the region $q>2$; at $q=\infty$ the optimal weight exponent is obtained for every $2\le p\le m$. The sparse coefficient estimates provide the matching dimensional obstructions.
\end{abstract}

\maketitle
\setcounter{tocdepth}{1}

\section{Introduction}\label{sec:introduction}

For an $m$-homogeneous polynomial
\[
P(z)=\sum_{|\alpha|=m}a_\alpha z^\alpha
\qquad (z\in\ell_p^n),
\]
the relation between its polynomial norm and the norm of its coefficient
sequence is expressed by inequalities of the form
\[
\left\|(a_\alpha)_{|\alpha|=m}\right\|_{\ell_r}
\le C_{m,p,r}(n)
\|P\|_{\mathcal P({}^m\ell_p^n)}.
\]
The dimension-free cases include the Bohnenblust--Hille inequality on
$\ell_\infty^n$ \cite{BohnenblustHille1931,DefantEtAl2011} and the
polynomial Hardy--Littlewood inequalities on $\ell_p^n$
\cite{HardyLittlewood1934,PracianoPereira,AraujoEtAl2015,
AronNunezPellegrinoSerrano}. When the optimal constant depends on $n$,
its growth measures the norm of the finite-dimensional coefficient
embedding. Estimates for these embedding constants in several parameter
ranges were obtained in \cite{GalicerMansillaMuro}.

Lower bounds for coefficient embeddings can be obtained from
multilinear forms with unimodular nonzero coefficients. For
$\mathbf p=(p_1,\ldots,p_m)\in[2,\infty]^m$ and a support
$\Lambda\subset[n]^m$, consider
\[
T_{\Lambda,\varepsilon}(x^{(1)},\ldots,x^{(m)})
=
\sum_{\mathbf i\in\Lambda}
\varepsilon_{\mathbf i}
x^{(1)}_{i_1}\cdots x^{(m)}_{i_m},
\qquad |\varepsilon_{\mathbf i}|=1.
\]
The infimum of the operator norm of these forms under
$|\Lambda|\asymp n^d$ depends on the cardinality of $\Lambda$ and on
the norms of the coordinate restrictions of
$T_{\Lambda,\varepsilon}$. For full supports, norm estimates of this
type belong to the Kahane--Salem--Zygmund theory
\cite{Kahane1985,Boas2000,DefantMastylo2023,
AlbuquerqueRezende,PellegrinoSerranoSilva,PellegrinoRaposo}.

For prescribed supports, the coordinate projections of rectangular
restrictions determine the dimensions of the subspaces on which the
restricted forms act. Related support constructions and coefficient
estimates occur in \cite{Blei1984,BleiKorner1984,BayartJEMS}.
In particular, \cite[Corollary~4.12]{BayartJEMS} gives unimodular
multilinear forms with supports of prescribed polynomial cardinality
in several parameter ranges. Partial Steiner supports yield
homogeneous polynomials with approximately $n^{m-1}$ nonzero
unimodular coefficients and controlled norm
\cite{GalicerMuroSevillaPeris2018,GalicerMansillaMuro}.

For isotropic exponents, the zero-growth threshold for unimodular
forms supported on sets of cardinality comparable to $n^d$, together
with the associated multilinear summability invariant, was determined
in \cite{BarbosaPellegrinoRaposoTeixeira2026}. The local estimates
used here concern rectangular restrictions and their coordinate
projection sizes. Their anisotropic form retains the dependence on
each exponent $p_j$.

Throughout, $[m]:=\{1,\ldots,m\}$ and $x_+:=\max\{x,0\}$ for $x\in\mathbb R$.

Let $\mathbf p=(p_1,\ldots,p_m)\in[2,\infty]^m$, and put
\[
 \sigma:=\sum_{j=1}^m\frac1{p_j},
 \qquad
 \sigma_I:=\sum_{j\in I}\frac1{p_j}.
\]
Let
\[
 \mathcal I(\mathbf p):=
 \left\{\varnothing\ne I\subset[m]:
 \frac12\le\sigma_I<1\right\}.
\]
For $d\in[1,m]$ define
\begin{equation}\label{eq:alpha-aniso}
 \alpha_{m,d}(\mathbf p):=
 \begin{cases}
 \displaystyle
 d\,\frac{m+1-2\sigma}{2m},
 &\sigma\le\frac12,\\[2mm]
 \displaystyle
 \max\left\{
 0,\frac{d+1}{2}-\sigma,
 \max_{I\in\mathcal I(\mathbf p)}
 (d-m+|I|)_+(1-\sigma_I)
 \right\},
 &\sigma\ge\frac12.
 \end{cases}
\end{equation}

When $\sigma\ge\frac12$, the three quantities in the maximum are, respectively, the basis-vector obstruction, the global $\ell_2$ obstruction, and the obstructions produced by coordinate sections.

For $\Lambda\subset[n]^m$ and scalars
$\varepsilon_{\mathbf i}\in\mathbb K$ satisfying
$|\varepsilon_{\mathbf i}|=1$, define the sparse unimodular form
\[
 T_{\Lambda,\varepsilon}
 (x^{(1)},\ldots,x^{(m)})
 :=
 \sum_{\mathbf i=(i_1,\ldots,i_m)\in\Lambda}
 \varepsilon_{\mathbf i}
 x^{(1)}_{i_1}\cdots x^{(m)}_{i_m}.
\]
For $1\le N\le n^m$, set
\[
 \sK^{\mathbb K}_{m,\mathbf p}(n,N)
 :=
 \inf\left\{
 \|T_{\Lambda,\varepsilon}\|:
 \Lambda\subset[n]^m,\;
 |\Lambda|\ge N,\;
 |\varepsilon_{\mathbf i}|=1
 \text{ for every }\mathbf i\in\Lambda
 \right\},
\]
where the norm is taken on
\[
 \ell_{p_1}^n\times\cdots\times\ell_{p_m}^n.
\]
Finally, put
\[
 \cK^{\mathbb K}_{m,d,\mathbf p}(n)
 :=
 \sK^{\mathbb K}_{m,\mathbf p}
 (n,\lfloor n^d\rfloor).
\]
\begin{theorem}\label{thm:A}
Let $m\ge2$, $d\in[1,m]$, $\mathbf p=(p_1,\ldots,p_m)\in[2,\infty]^m$, and $\K\in\{\R,\C\}$. Then
\begin{equation}\label{eq:main-asymptotic}
 \cK^{\K}_{m,d,\mathbf p}(n)
 =n^{\alpha_{m,d}(\mathbf p)+o(1)}.
\end{equation}
More precisely, there is $c=c(m,d,\mathbf p)>0$ such that
\begin{equation}\label{eq:main-lower}
 \cK^{\K}_{m,d,\mathbf p}(n)\ge c\,n^{\alpha_{m,d}(\mathbf p)}
\end{equation}
for all $n\ge2$, while for every $\eta>0$ there is $C_\eta>0$ such that
\begin{equation}\label{eq:main-upper}
 \cK^{\K}_{m,d,\mathbf p}(n)\le C_\eta n^{\alpha_{m,d}(\mathbf p)+\eta}
\end{equation}
for all $n\ge2$.  If $\sigma\le1/2$, or if $d=m$, the upper estimate holds with $\eta=0$.

More generally, if $N_n\in\{1,\ldots,n^m\}$ and
\[
 \frac{\log N_n}{\log n}\longrightarrow d\in[1,m],
\]
then
\[
 \lim_{n\to\infty}
 \frac{\log \sK^{\K}_{m,\mathbf p}(n,N_n)}{\log n}
 =\alpha_{m,d}(\mathbf p).
\]
\end{theorem}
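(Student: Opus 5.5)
The proof splits into a lower bound \eqref{eq:main-lower}, obtained by exhibiting test vectors for each term in \eqref{eq:alpha-aniso}, and an upper bound \eqref{eq:main-upper}, obtained from random signs on a structured support.

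\emph{Lower bound.} Fix $\Lambda$ with $|\Lambda|\ge n^d$ and unimodular $\varepsilon$, and bound each term of the maximum separately.

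The basis-vector obstruction is immediate: evaluating at a tuple of basis vectors $e_{i_1},\ldots,e_{i_m}$ with $\mathbf i\in\Lambda$ gives $\|T\|\ge1$.

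The global $\ell_2$ obstruction comes from averaging. Integrate $|T(x^{(1)},\ldots,x^{(m)})|^2$ over independent Steinhaus or Rademacher vectors $x^{(j)}=n^{-1/p_j}(\pm1)$. Orthogonality gives
\[
\|T\|^2\ge |\Lambda|\,n^{-2\sigma}\ge n^{d-2\sigma}.
\]
This is the bound $n^{d/2-\sigma}$. To reach $\frac{d+1}{2}-\sigma$ we need one more factor $n^{1/2}$, which I would extract by duality in one variable. Freeze $m-1$ variables, view $T$ as the vector $\bigl(T(\cdot,x^{(2)},\ldots)_{i}\bigr)_i$, and bound its $\ell_{p_1'}$-norm from below. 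Averaging the $\ell_{p_1'}$-norm with Khintchine's inequality in the remaining variables gives $\sum_{i_1}(\#\text{row})^{1/2}$-type quantities. I would then choose the free coordinate to be the one whose slicing maximises the count, and use H\"older together with $p_j\ge2$.

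If this bookkeeping does not close, I would fall back on the standard inequality $\|T\|\gtrsim n^{-\sigma}\cdot\bigl(\sum|\text{coef}|^2\bigr)^{1/2}\cdot n^{1/2}$ for $\sigma\ge\frac12$ via Hardy--Littlewood/Bohnenblust--Hille mixed estimates, in the form of the known $\ell_2$-to-$\ell_{p}$ embedding. For $\sigma\le1/2$ the exponent $d\frac{m+1-2\sigma}{2m}$ comes from the Hardy--Littlewood inequality with the optimal exponent $\rho=\frac{2m}{m+1-2\sigma}$, namely
\[
|\Lambda|^{1/\rho}\le C\|T\|.
\]

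The section obstructions come from restriction. For $I\in\mathcal I(\mathbf p)$, pigeonhole a product set $\prod_{j\notin I}\{i_j\}$ (fixed coordinates outside $I$) containing at least $n^{d-(m-|I|)}$ points of $\Lambda$. Restrict $T$ to the corresponding $|I|$-linear form and apply the global bound with $\sigma_I\in[\tfrac12,1)$ and effective dimension $d'=d-m+|I|$. This should give $(d')_+(1-\sigma_I)$, via an $\ell_1$-type bound on the restricted support: testing with normalized flat vectors on the projections and optimising over projection sizes. Since this step is pure optimisation of sizes, I expect it to be routine once the right test vectors are fixed.

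\emph{Upper bound.} Take $\Lambda$ to be a near-design support, for example a partial Steiner-type system or an affine subspace of $\mathbb F_q^m$ of dimension $d$ (intersected with a grid, for non-integer $d$ using a blow-up). Such a support has all rectangular restrictions with projection sizes as balanced as possible, up to $n^{o(1)}$ losses. Put random signs on $\Lambda$ and bound $\mathbb E\|T\|$ by a chaining/entropy argument (Bayart's \cite[Corollary~4.12]{BayartJEMS} style). The norm of a random form is governed by the maximum over rectangular restrictions of $\sqrt{\text{count}}\times$ normalisation, plus a Dudley entropy loss of $\log n$, absorbed into $n^\eta$. Matching this maximum to the three obstructions is exactly the content of the local estimates described before the theorem. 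When $\sigma\le\frac12$ or $d=m$, the full KSZ inequality gives the bound without a log loss.

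\emph{General $N_n$.} This follows by monotonicity of $\sK$ in $N$ and continuity of $\alpha_{m,d}$ in $d$.

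The main obstacle is the upper bound: constructing supports whose every rectangular restriction has the extremal projection profile simultaneously for all $I$, and showing that the random-sign norm is controlled by these local counts.
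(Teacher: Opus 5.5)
\textbf{The main gap is the upper bound for $\sigma\ge\frac12$, $1\le d<m$.} Your lower-bound scheme is essentially the paper's. The upper bound, which you yourself call the main obstacle, is not proved, and the heuristic you give for it is off by a power of $n$.

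For random signs on a rectangular trace $F$ with $M=|F|$ and projection sizes $v_j$, the union bound runs over rectangles and over nets of $\T^{v_1}\times\cdots\times\T^{v_m}$. Its cost is a factor $\sqrt{H_n(F)}$, where $H_n(F)=1+\sum_j v_j\log(en/v_j)\approx V\log n$. This is a power of $n$, not a $\log n$ Dudley loss, and it is why Proposition~\ref{prop:rect-sign} gives $\sqrt{M(F)H_n(F)}$. With only a logarithmic loss, your recipe would give the full trace of a spread support the value $n^{d/2-\sigma+o(1)}$. That is the same flat-vector quantity you already found too small in the lower bound. Proposition~\ref{prop:global-lower} shows every such form has norm at least $n^{(d+1)/2-\sigma}$; the missing $n^{1/2}$ is $\sqrt V$ with $V=n$.

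The correct local requirement is therefore $(\mathbf p,\lambda)$-sparsity: $M(F)H_n(F)\le n^\lambda\prod_j v_j(F)^{2/p_j}$ for \emph{every} nonempty $F\subset\Lambda$. The substance of the proof is to produce $\Lambda$ with $|\Lambda|\ge n^d$ that is $(\mathbf p,\lambda)$-sparse for every $\lambda>2\alpha_{m,d}(\mathbf p)$. Saying that matching to the three obstructions ``is exactly the content of the local estimates'' assumes this.

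The paper takes a Bernoulli support of density $2n^{d-m}$. It bounds the expected number of violating witnesses by $\exp\bigl(\sum_j v_j\log(en/v_j)\bigr)\bigl(2e\,n^{d-m}\prod_j v_j/M\bigr)^M$. The regime $M\ge Vn^\tau$ yields the threshold $d+1-2\sigma$. The regime $M<Vn^\tau$ reduces to the linear-fractional program of Lemma~\ref{lem:weighted-obstruction}, whose vertex values are controlled exactly by the global and sectional terms.

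Your algebraic candidates are never checked against this condition, and they sidestep the hard case. For integer $d$ the number $d'=d-m+|I|$ is an integer. When $d'\ge1$ one has $d'(1-\sigma_I)\le\frac{d'+1}2-\sigma_I\le\frac{d+1}2-\sigma$. So sectional terms only matter for non-integer $d$, which is exactly what you leave to an unspecified ``blow-up''. There the additive structure of affine subspaces or designs would have to be controlled on all structured rectangles.

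\textbf{Smaller points.} First, in the sectional lower bound, ``the global bound with effective dimension $d'$'' on the full spaces $\ell_{p_j}^n$ yields only $\frac{d'+1}2-\sigma_I$. This is below $d'(1-\sigma_I)$ exactly when $0<d'<1$ and $\sigma_I>\frac12$, the only case in which that term can be the maximum. The paper uses the Hardy--Littlewood inequality with exponent $(1-\sigma_I)^{-1}$ (Lemma~\ref{lem:sectional-coeff}). Alternatively, you can make ``optimize over projection sizes'' rigorous by running the critical argument on $\prod_{j\in I}\ell_{p_j}(\pi_j(F))$ instead. Take $q_j\ge p_j$ with $\sum_{j\in I}1/q_j=\frac12$. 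The inclusion factor is $\prod_{j\in I}v_j^{1/p_j-1/q_j}\le M^{\sigma_I-1/2}$, because $v_j\le M$. Lemma~\ref{lem:l2-critical} then gives $\|A\|\gtrsim M^{1/2}M^{1/2-\sigma_I}=M^{1-\sigma_I}$.

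Second, for the global term, drop the duality/Khintchine sketch. Your fallback, Lemma~\ref{lem:l2-critical} on $\prod_j\ell_{q_j}^n$ together with $\|x\|_{p_j}\le n^{1/p_j-1/q_j}\|x\|_{q_j}$, is exactly the paper's argument.

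Third, for $\sigma\le\frac12$ and $d<m$, the Kahane--Salem--Zygmund form must sit on the compressed box $[\lceil n^{d/m}\rceil]^m$. On $[n]^m$ it only gives the larger exponent $\frac{m+1}2-\sigma$.

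Finally, at $d=1$ the monotonicity argument for $N_n$ needs lower bounds for supports of size $n^{1-\delta}$, which lies outside the range $d\in[1,m]$ of the fixed-$d$ statement. Your lower-bound arguments do provide these bounds, but this has to be stated.
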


In the diagonal case $p_1=\cdots=p_m=p$, the sparse multilinear
estimate gives lower bounds for polynomial coefficient embeddings.
Let $T$ be an $m$-linear form on $(\ell_p^k)^m$, and place its variables
in $m$ disjoint coordinate blocks of $\ell_p^{mk}$. The resulting
$m$-homogeneous polynomial has one distinct monomial for each nonzero
coefficient of $T$. Its coefficient $\ell_r$ norm is determined by
the cardinality of the support, while its polynomial norm is controlled
by the operator norm of $T$.

Weighted coefficient estimates arise from diagonal operators. Let
$1\le p\le s\le\infty$ and
\[
\frac1{r(p,s)}=\frac1p-\frac1s.
\]
For $u=(u_j)_{j=1}^n$, define
\[
D_u:\ell_s^n\longrightarrow\ell_p^n,
\qquad
D_u(z)=(u_jz_j)_{j=1}^n.
\]
H\"older's inequality gives
\[
\|D_u z\|_p\le\|u\|_{r(p,s)}\|z\|_s.
\]
If $P(z)=\sum_{|\alpha|=m}a_\alpha z^\alpha$ is defined on $\ell_p^n$,
then $P\circ D_u$ is defined on $\ell_s^n$ and has coefficient sequence
$(a_\alpha u^\alpha)_{|\alpha|=m}$. A Hardy--Littlewood coefficient
inequality for $P\circ D_u$ therefore gives a weighted coefficient
inequality for $P$.
For $s\ge m$, set
\[
 q(m,s):=
 \begin{cases}
 \infty,&s=m,\\[1mm]
 \dfrac{s}{s-m},&m<s<2m,\\[2mm]
 \dfrac{2ms}{ms+s-2m},&2m\le s<\infty,\\[2mm]
 \dfrac{2m}{m+1},&s=\infty.
 \end{cases}
\]

For $s>m$, these are the classical polynomial Hardy--Littlewood
exponents, with $s=\infty$ giving the Bohnenblust--Hille endpoint.
At $s=m$ we use the critical maximum-coefficient continuation
$q(m,m)=\infty$. This endpoint was studied in
\cite{CavalcanteNunez2016}; the exact complex constant needed below
will follow from the sharp coefficient estimate in
Proposition~\ref{whl:prop:coefficient-functional}.

Let $H^{\mathrm{pol}}_{m,s}(\mathbb C)$ be the optimal constant in the corresponding polynomial coefficient inequality
\begin{equation}\label{whl:eq:classical-HL}
 \left\|(a_\alpha)_{|\alpha|=m}\right\|_{\ell_{q(m,s)}}
 \le H^{\mathrm{pol}}_{m,s}(\mathbb C)\|P\|_s.
\end{equation}
If $1\le p\le s$ and
\begin{equation}\label{whl:eq:rps-intro}
 \frac1{r(p,s)}=\frac1p-\frac1s,
\end{equation}
define
\[
 \mathcal W_{m,p,s}(P):=
 \sup_{\|u\|_{r(p,s)}\le1}
 \left\|(a_\alpha u^\alpha)_{|\alpha|=m}\right\|_{\ell_{q(m,s)}}.
\]
Equivalently, $\mathcal W_{m,p,s}(P)$ is the supremum of the
$\ell_{q(m,s)}$ norm of the coefficient sequence of $P\circ D_u$
over the unit ball of $\ell_{r(p,s)}^n$.

\begin{theorem}\label{thm:B}
Let $m\ge1$, $m\le s\le\infty$, and $1\le p\le s$. Then
\begin{equation}\label{whl:eq:intro-transfer-functional}
 \mathcal W_{m,p,s}(P)
 \le H^{\mathrm{pol}}_{m,s}(\mathbb C)\|P\|_p.
\end{equation}
If $p=s\ge m$, then
\[
 \mathcal W_{m,p,p}(P)
 =\left\|(a_\alpha)_{|\alpha|=m}\right\|_{\ell_{q(m,p)}},
\]
so the new functional coincides exactly with the classical Hardy--Littlewood coefficient norm, with the same optimal constant. Moreover, if $1\le p\le m$ and
\[
 r_{m,p}:=\begin{cases}\dfrac{mp}{m-p},&p<m,\\[1mm]\infty,&p=m,\end{cases}
\]
then
\[
 \max_{|\alpha|=m}|a_\alpha u^\alpha|
 \le m\,\|u\|_{r_{m,p}}^m\|P\|_p,
\]
and the constant $m$ is optimal.
\end{theorem}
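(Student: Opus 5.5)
The statement has three parts, and I would prove them in order.

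\emph{Part 1: the transfer inequality.} Fix $u$ with $\|u\|_{r(p,s)}\le1$. The polynomial $P\circ D_u$ on $\ell_s^n$ has coefficients $(a_\alpha u^\alpha)$. Apply the classical inequality \eqref{whl:eq:classical-HL} to it, with the endpoint $s=m$ covered by the sharp coefficient estimate mentioned before the statement (Proposition~\ref{whl:prop:coefficient-functional}). This gives
\[
\|(a_\alpha u^\alpha)\|_{\ell_{q(m,s)}}\le H^{\mathrm{pol}}_{m,s}(\C)\,\|P\circ D_u\|_s .
\]
By H\"older, $D_u$ maps the unit ball of $\ell_s^n$ into that of $\ell_p^n$, so $\|P\circ D_u\|_s\le\|P\|_p$. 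Taking the supremum over $u$ yields \eqref{whl:eq:intro-transfer-functional}. For real scalars one complexifies, or reads the constant as the complex one.

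\emph{Part 2: the case $p=s$.} Here $r(p,s)=\infty$, so $u$ ranges over the unit ball of $\ell_\infty^n$. Then $|u^\alpha|\le1$ for every $\alpha$, hence $\|(a_\alpha u^\alpha)\|_{q}\le\|(a_\alpha)\|_q$. The choice $u=(1,\dots,1)$ attains this bound. Thus $\mathcal W_{m,p,p}$ is exactly the coefficient norm, and the optimal constant is the same as in \eqref{whl:eq:classical-HL}.

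\emph{Part 3: the sharp maximum-coefficient bound for $p\le m$.} Fix $\alpha$ with $|\alpha|=m$ and support $J$. I would use the Cauchy estimate on the torus. For $z=(t_j w_j)$ with $w_j\in\T$,
\[
a_\alpha t^\alpha=\int_{\T^n}P(tw)\,\overline{w^\alpha}\,dw,
\]
so $|a_\alpha|\,t^\alpha\le\|P\|_p$ whenever $\|t\|_p\le1$. This is the place to optimize, which I expect to be the main obstacle. Choose $t_j^p=\alpha_j/m$ on $J$. This gives
\[
|a_\alpha|\le\|P\|_p\prod_{j\in J}(m/\alpha_j)^{\alpha_j/p}.
\]
The target is then
\[
|u^\alpha|\prod_{j}(m/\alpha_j)^{\alpha_j/p}\le m\,\|u\|_{r}^m ,\qquad \frac1r=\frac1p-\frac1m .
\]
Maximize $\prod|u_j|^{\alpha_j}$ subject to $\|u\|_r\le1$; this is again a Lagrange problem, with $|u_j|^r=\alpha_j/m$. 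The left side becomes
\[
\prod_j (m/\alpha_j)^{\alpha_j(1/p-1/r)}=\prod_j(m/\alpha_j)^{\alpha_j/m}.
\]
This is the weighted geometric mean of $m/\alpha_j$ with weights $\alpha_j/m$. By AM--GM it is at most $\sum_j(\alpha_j/m)(m/\alpha_j)=|J|\le m$. For $p=m$, $r=\infty$, the same computation works with $|u_j|\le1$.

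For optimality I would take the case of equality: $\alpha=(1,\dots,1,0,\dots)$ with $n\ge m$, $u=(1,\dots,1)$ on $[m]$ at $p=m$, and $P(z)=z_1\cdots z_m$. Then $\|P\|_m=m^{-1}$ by AM--GM on $\ell_m$, so the ratio is exactly $m$. For general $p<m$, take $u$ constant $m^{-1/r}$ on $[m]$, so that $\|u\|_r=1$, and compute $\|z_1\cdots z_m\|_p=m^{-m/p}$. The ratio is
\[
\frac{m^{-m/r}}{m^{-m/p}}=m^{m(1/p-1/r)}=m ,
\]
which confirms sharpness. The delicate point is checking that the two optimizations combine with no loss, which the AM--GM step handles.
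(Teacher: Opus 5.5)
Your proposal is correct and follows the paper's proof essentially step for step. The first part uses the diagonal transfer $P\circ D_u$ with H\"older. The case $p=s$ compares $|u^\alpha|\le 1$ against $u=(1,\ldots,1)$. The maximum-coefficient bound combines the optimized Cauchy estimate (radii $t_j^p=\alpha_j/m$) with the monomial optimization on $\ell_{r}$, and sharpness comes from $z_1\cdots z_m$ with $u=m^{-1/r}(1,\ldots,1)$. The only cosmetic difference is that you bound $\prod_j(m/\alpha_j)^{\alpha_j/m}$ by $|J|\le m$ via weighted AM--GM, whereas the paper uses $\alpha^\alpha\ge1$ to get $(m^m/\alpha^\alpha)^{1/m}\le m$.
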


If $s=p\ge m$, then $r(p,p)=\infty$. Taking $u=(1,\ldots,1)$
shows that $\mathcal W_{m,p,p}(P)$ is at least the classical
Hardy--Littlewood coefficient norm. The reverse inequality follows
from $|u^\alpha|\le1$ whenever $\|u\|_\infty\le1$. Thus the two
functionals coincide, including their optimal constants.

If $s=m$ and $p\le m$, let
\[
\frac1{r_{m,p}}=\frac1p-\frac1m,
\]
with $r_{m,m}=\infty$. For $u\in\ell_{r_{m,p}}^n$, the diagonal operator
\[
D_u:\ell_m^n\longrightarrow\ell_p^n
\]
transfers the maximum-coefficient estimate at $s=m$ to a weighted
maximum-coefficient estimate on $\ell_p^n$.

For the coefficient/sup-norm application, let $2\le p\le m$, set
\[
 h:=\lceil p\rceil-1,\qquad c:=1-\frac hp,\qquad D:=m-h,\qquad G:=\frac{2m}{p}-1,
\]
and define
\[
 \beta_{m,p}(r):=
 \sup_{1\le d\le m}\left\{\frac dr-\alpha_{m,d}(p)\right\},
 \qquad 2\le r\le\infty.
\]
Let $\mathfrak A^m_{p,r}(n)$ be the least constant such that
\[
 \|(a_\alpha)_{|\alpha|=m}\|_{\ell_r}
 \le \mathfrak A^m_{p,r}(n)\|P\|_p
\]
for every $m$-homogeneous polynomial on $\mathbb C^n$.

\begin{theorem}\label{thm:C}
Let $m\ge2$, $2\le p\le m$, and $2\le r\le\infty$.  Then
\begin{equation}\label{eq:poly-A-limit}
 \lim_{n\to\infty}
 \frac{\log \mathfrak A_{p,r}^m(n)}{\log n}
 =\beta_{m,p}(r).
\end{equation}
Equivalently,
\[
 \mathfrak A_{p,r}^m(n)=n^{\beta_{m,p}(r)+o(1)}.
\]
The exponent admits the following explicit form.  If $D\ge G$, then
\begin{equation}\label{eq:beta-case-DgeG}
 \beta_{m,p}(r)=\frac Gr.
\end{equation}
If $D<G$, then necessarily $p>2$.  Set
\begin{equation}\label{eq:dstar-def}
 d_*:=\frac{G-2cD}{1-2c}.
\end{equation}
For $x:=1/r\in[0,1/2]$ one has
\begin{equation}\label{eq:beta-case-DltG}
 \beta_{m,p}(r)=
 \begin{cases}
 Dx, & 0\le x\le c,\\[1.5mm]
 Dc+\displaystyle\frac{x-c}{1/2-c}
 \left(\frac G2-Dc\right),
 & c\le x\le1/2.
 \end{cases}
\end{equation}
In particular, \eqref{eq:poly-A-limit} determines the optimal polynomial power in the part of the strip $2\le p\le m$, $2\le r<\infty$ which is left undetermined in the diagram of \cite[Theorem~2.1]{GalicerMansillaMuro}.
\end{theorem}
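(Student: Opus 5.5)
We prove \eqref{eq:poly-A-limit} through a lower bound and an upper bound for $\log\mathfrak A^m_{p,r}(n)/\log n$, and then compute the supremum defining $\beta_{m,p}(r)$ explicitly.

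\emph{Lower bound.} Fix $d\in[1,m]$ and set $k=\lfloor n/m\rfloor$. Theorem~\ref{thm:A} with $\mathbf p=(p,\ldots,p)$ gives $\Lambda\subset[k]^m$ with $|\Lambda|\ge\lfloor k^d\rfloor$ and unimodular signs such that $\|T_{\Lambda,\varepsilon}\|\le C_\eta k^{\alpha_{m,d}(p)+\eta}$ on $(\ell_p^k)^m$. Put the $m$ variables in disjoint blocks of $\ell_p^{mk}\subset\ell_p^n$ and set $P(z)=T(z^{(1)},\ldots,z^{(m)})$. Each index $\mathbf i\in\Lambda$ produces a distinct monomial with unimodular coefficient, so $\|(a_\alpha)\|_{\ell_r}=|\Lambda|^{1/r}\asymp k^{d/r}$. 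On the other hand, $\|z^{(j)}\|_p\le\|z\|_p$ gives $\|P\|_p\le\|T\|$. Hence $\mathfrak A^m_{p,r}(n)\gtrsim n^{d/r-\alpha_{m,d}(p)-\eta}$. Taking the supremum over $d$ (continuity in $d$ lets us restrict to finitely many $d$) and letting $\eta\to0$ gives $\liminf\ge\beta_{m,p}(r)$.

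\emph{Upper bound.} This is where I expect the main obstacle: we must show that no polynomial beats the sparse examples. The plan is a level-set decomposition. Normalize $\|P\|_p=1$ and split the coefficients dyadically, $\{\alpha:2^{-j-1}<|a_\alpha|\le2^{-j}\}$, with $N_j$ elements. Then $\|a\|_r^r\lesssim\sum_j N_j2^{-jr}$, and there are only $O(\log n)$ relevant levels, since the tail below $n^{-m}$ contributes negligibly. So it suffices to show: if a polynomial of norm $\le1$ has $N\approx n^d$ coefficients of modulus $\approx t$, then $t\lesssim n^{-\alpha_{m,d}(p)+o(1)}$. This is a ``non-unimodular'' version of the lower bound \eqref{eq:main-lower}. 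It should follow from the same three obstructions, transferred to polynomials by polarization (which costs only constants depending on $m$):
\begin{itemize}
\item the basis-vector obstruction $|a_\alpha|\le C\|P\|$ (for example Theorem~\ref{thm:B});
\item the $\ell_2$ obstruction, by averaging over the torus: $\|a\|_2\le\|P\|_\infty\le n^{m/p}\|P\|_p$;
\item the section obstructions, restricting $P$ to coordinate subspaces on which many large coefficients live and applying the $\ell_2$ or Hardy--Littlewood bound there.
\end{itemize}
The difficulty is that the proof of \eqref{eq:main-lower} may use unimodularity through a pigeonhole on coordinate projections. I would redo that argument for the level set, where the coefficients are comparable up to a factor $2$, so unimodularity is only needed up to constants. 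Then $\|a\|_r\le n^{\beta+o(1)}$.

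\emph{Explicit form.} It remains to evaluate $\sup_d\{d/r-\alpha_{m,d}(p)\}$ with $\sigma=m/p\ge1$, so $\sigma\ge1/2$. The set $\mathcal I$ consists of the $I$ with $p/2\le|I|<p$. The best section has $|I|=h=\lceil p\rceil-1$, giving $1-\sigma_I=c$, and contributes $c(d-D)_+$. The global term is $(d+1)/2-m/p=(d-G)/2$. Thus $\alpha_{m,d}=\max\{0,c(d-D)_+,(d-G)/2\}$, which is piecewise linear and convex in $d$, so $d/r-\alpha$ is concave with slope $x$, $x-c$, $x-1/2$ on its three pieces. If $D\ge G$, the middle piece is dominated and the maximum is at $d=G$, giving $G/r$. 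If $D<G$, the kinks are at $D$ and $d_*$, the crossing of $c(d-D)$ with $(d-G)/2$. The maximum sits at $D$ for $x\le c$ and at $d_*$ for $x\ge c$, which gives the piecewise formula \eqref{eq:beta-case-DltG} after simplifying $d_*x-c(d_*-D)$. The claim that $D<G$ forces $p>2$ holds because $p=2$ gives $h=1$, $D=m-1=G$.
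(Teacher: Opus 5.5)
Your lower bound is the paper's block construction; using $\|P\|_p\le\|T\|$ instead of the AM--GM factor $m^{-m/p}$ is harmless. Your evaluation of the supremum follows the paper's computation.

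\textbf{The gap is in the upper bound: the $\ell_2$ ingredient you name is too weak.} Torus averaging gives only $\|(a_\alpha)\|_{\ell_2}\le\|P\|_\infty\le n^{m/p}\|P\|_p$. So a level set of $N=n^d$ coefficients of modulus about $t$ yields only $t\lesssim n^{m/p-d/2}$. That obstruction, $d/2-m/p$, falls short of the global term $(d+1)/2-m/p=(d-G)/2$ of $\alpha_{m,d}(p)$ by $1/2$.

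The other obstructions do not compensate. For $m=6$, $p=5$ (so $D=2>G=7/5$) and $r=2$, your three obstructions fed into the level-set sum give exponent $6/5$, whereas $\beta_{6,5}(2)=7/10$.

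The correct input is the critical Hardy--Littlewood inequality on $\ell_{2m}^n$ (\cref{lem:l2-critical} with $q_j=2m$, equivalently $q(m,2m)=2$), together with $\|P\|_{2m}\le n^{m/p-1/2}\|P\|_p$. This gives $N^{1/2}t\lesssim n^{m/p-1/2}$, which is exactly the global term.

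Likewise, the sectional obstruction does not come from restricting $P$ to a coordinate subspace and using an $\ell_2$ bound there. The steps are:
\begin{itemize}
\item polarize, noting that the symmetric form $L$ has coefficients $a_\alpha\,\alpha!/m!$, comparable to $a_\alpha$;
\item pigeonhole over $m-k$ of the indices to find a section with at least $N/n^{m-k}$ coefficients of modulus $\gtrsim t$;
\item freeze those variables at basis vectors and apply \cref{lem:sectional-coeff} with $1/r=1-k/p$.
\end{itemize}
This is the argument of \cref{prop:section-lower}. It uses only a lower bound on the moduli, so unimodularity is not an issue.

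With these corrections your layer-cake route works. Each level with $N_j=n^{d_j}$ elements and height $2^{-j}$ satisfies $2^{-j}\lesssim_{m,p}n^{-\alpha_{m,d_j}(p)}$ uniformly in $d_j$. Levels with $N_j<n$ are handled by $|a_\alpha|\le m^{m/p}\|P\|_p$ together with $\beta_{m,p}(r)\ge1/r$. Summing $O(\log n)$ levels gives $|P|_r\lesssim(\log n)^{1/r}n^{\beta_{m,p}(r)}\|P\|_p$.

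This is genuinely different from the paper, which uses no level sets. The paper takes three endpoint estimates and interpolates them by H\"older's inequality in $1/r$:
\begin{itemize}
\item $r=\infty$, from basis vectors;
\item $r=2$, from the critical $\ell_{2m}$ inequality;
\item $r_0=1/c$, from Bayart's sectional bound $\mathfrak A^m_{p,r_0}(n)\lesssim n^{Dc}$ (\cref{lem:Bayart-sectional}).
\end{itemize}
It then checks that the resulting piecewise linear envelope coincides with the explicit formula for $\beta_{m,p}(r)$.

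The paper's route has no logarithmic loss and needs no non-unimodular version of the lower bounds. However, it relies on the explicit formula and on Bayart's estimate. Yours bounds the variational expression $\sup_d\{d/r-\alpha_{m,d}(p)\}$ directly, from the same Hardy--Littlewood inequalities that give \eqref{eq:main-lower}. It needs neither, and in the polynomial setting it even recovers Bayart's bound at $r_0$ up to a logarithm.

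For the explicit form, two small points.
\begin{itemize}
\item You did not check that $G<d_*\le m$ when $D<G$. Without $d_*\le m$, the three-piece formula for $\alpha_{m,d}(p)$ and the location of the maximizer at $d_*$ for $x\ge c$ are unjustified. The paper proves it by showing $E(p)=p(m+1-2h)+2h^2-2m\ge0$ at $p=h$ and $p=h+1$.
\item The best section is not always $|I|=h$: for $p\ge4$ and $d=m$, the term $k(1-k/p)$ is largest at $k=\lceil p/2\rceil<h$. The formula $\alpha_{m,d}(p)=\max\{0,(d-G)/2,c(d-D)_+\}$ is nevertheless correct. For integers $p/2\le k\le h-1$ one has $(d-m+k)(1-k/p)\le(d+1)/2-m/p$ whenever $d\ge m-k$.
\end{itemize}
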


Suppose that, with a constant independent of $n$,
\[
\left\|(a_\alpha u^\alpha)_{|\alpha|=m}\right\|_{\ell_q}
\le C\|u\|_t^m\|P\|_p
\]
holds for every $m$-homogeneous polynomial $P$ on $\ell_p^n$.
Taking $u=n^{-1/t}(1,\ldots,1)$ reduces this estimate to an
unweighted coefficient inequality. Hence the growth exponent in
Theorem~\ref{thm:C} imposes a necessary condition on $t$.
Diagonal transfer of Hardy--Littlewood coefficient inequalities
provides sufficient conditions.

The sharp multiplier results can be stated in one theorem. Put
\[
 q_m:=\frac{2m}{m+1}.
\]

\begin{theorem}\label{thm:D}
Let $m\ge2$.
\begin{enumerate}[label=\textup{(\roman*)}]
\item If $2\le p\le m$ and $1\le q\le2$, define $t_*=t_*(m,p,q)$ by
\begin{equation}\label{whl:eq:tstar-intro}
 \frac1{t_*}=\frac1p+\frac1q-\frac{m+1}{2m}.
\end{equation}
Then
\begin{equation}\label{whl:eq:sharp-strip-intro}
 \left(\sum_{|\alpha|=m}|a_\alpha u^\alpha|^q\right)^{1/q}
 \le C_{m,p,q}\|u\|_{t_*}^m\|P\|_p.
\end{equation}
holds dimension-free, and $t_*$ is the largest admissible weight exponent.
\item If $p=m$ and $1\le q\le\infty$, define $\rho_m(q)$ by
\begin{equation}\label{whl:eq:rho-critical-intro}
 \frac1{\rho_m(q)}=
 \begin{cases}
 \dfrac1q-\dfrac{m-1}{2m},&1\le q\le2,\\[2mm]
 \dfrac1{mq},&2\le q\le\infty.
 \end{cases}
\end{equation}
Then $\rho_m(q)$ is the largest admissible diagonal weight exponent.
\item Let $2\le p\le m$ and $2\le q\le\infty$, and put
\[
 h:=\lceil p\rceil-1,\qquad
 D:=m-h,\qquad
 G:=\frac{2m}{p}-1.
\]
If $q=\infty$, then the largest admissible diagonal weight exponent is
\[
 t_{\rm crit}=\infty
\]
for every $2\le p\le m$.

If $2\le q<\infty$ and $D\ge G$, then the largest admissible weight
exponent satisfies
\[
 \frac1{t_{\rm crit}}
 =
 \frac{G}{mq}
 =
 \frac2q\left(\frac1p-\frac1{2m}\right).
\]
\end{enumerate}
\end{theorem}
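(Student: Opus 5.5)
Each part has two halves: sufficiency by diagonal transfer through Theorem~\ref{thm:B} (plus interpolation where needed), and optimality by testing against flat weights $u=n^{-1/t}(1,\ldots,1)$, which turns a dimension-free weighted estimate into $\mathfrak A^m_{p,q}(n)\lesssim n^{m/t}$. Comparing with Theorem~\ref{thm:C} (for $q\ge2$), or with the sparse lower bounds of Theorem~\ref{thm:A} read through the block embedding (for $q<2$), forces $m/t\ge\beta_{m,p}(q)$. This upper bound on the admissible $1/t$ then has to be matched by a construction.

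For (i), I would choose $s\ge 2m$ with $q(m,s)=q$, i.e. $\frac1q=\frac{ms+s-2m}{2ms}=\frac{m+1}{2m}-\frac1s$. This is solvable precisely for $q_m\le q\le2$, where $s$ runs over $[2m,\infty]$.
\begin{itemize}
\item \emph{Sufficiency for $q_m\le q\le2$.} Theorem~\ref{thm:B} gives the weighted bound with $\frac1{r(p,s)}=\frac1p-\frac1s=\frac1p+\frac1q-\frac{m+1}{2m}=\frac1{t_*}$, together with $\|u\|_{t_*}^m$ homogeneity.
\item \emph{Sufficiency for $1\le q<q_m$.} Here I would write $|a_\alpha u^\alpha|=|a_\alpha u^\alpha_{(1)}|\,|u^\alpha_{(2)}|$, splitting $u$ as a product $u=vw$. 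Hölder in $\alpha$ then applies, using that $\sum_\alpha|w^\alpha|^{\theta}\le C\|w\|_{m\theta}^{m\theta}$-type bounds hold for symmetric sums (multinomial counting loses only constants). This gives $\ell_q$ from $\ell_{q_m}$ at the cost $\frac1{t_*}$ increasing by $\frac1q-\frac1{q_m}$, which matches the formula.
\item \emph{Optimality.} Flat weights give $n^{m/t}\gtrsim\mathfrak A^m_{p,q}(n)$. For $q\le2$ one needs a lower bound $\mathfrak A^m_{p,q}(n)\ge n^{m(\frac1p+\frac1q-\frac{m+1}{2m})}$, which comes from Theorem~\ref{thm:A} with $d=m$ and $\sigma=m/p\ge1$. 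Then $\alpha_{m,m}=\frac{m+1}2-\frac mp$ and the count is $n^{m/q}$, so $\frac mq-\alpha_{m,m}=m/t_*$.
\end{itemize}

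For (ii), with $p=m$:
\begin{itemize}
\item \emph{The range $q\le2$.} This is (i) with $p=m$, giving $\frac1{t_*}=\frac1q-\frac{m-1}{2m}$.
\item \emph{The range $q\ge2$, sufficiency.} I would interpolate between two weighted inequalities. At $q=2$, i.e. $s=2m$, we have $\frac1{\rho}=\frac1{2m}$. At $q=\infty$, Theorem~\ref{thm:B}'s max-coefficient bound holds with $r_{m,m}=\infty$. Interpolating, via complex interpolation of the bilinear map $(u\text{-powers},P)$ or more simply the elementary bound $\|x\|_q\le\|x\|_2^{2/q}\|x\|_\infty^{1-2/q}$ applied to $x=(a_\alpha u^\alpha)$, gives
\[
\|x\|_q\le C\,\|u\|_{2m}^{2m/q}\,\|u\|_\infty^{m(1-2/q)}\,\|P\|_m .
\]
This is not yet of the form $\|u\|_\rho^m$. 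To fix it, I would split $u$ into dyadic level sets or, better, use $|u^\alpha|^q=|u^\alpha|^2|u^\alpha|^{q-2}$ with $v=|u|^{q/2}$ so that $v^\alpha$ carries the weight. Concretely, $|a_\alpha u^\alpha|^q\le \max_\beta|a_\beta|^{q-2}\cdot|a_\alpha|^2|u^\alpha|^q$, and then I would apply the $q=2$ inequality to the weight $|u|^{q/2}$ with $\|\,|u|^{q/2}\|_{2m}^{2m}=\|u\|_{mq}^{mq}$. This yields $\rho=mq$.
\item \emph{The range $q\ge2$, optimality.} Theorem~\ref{thm:C} with $p=m$ gives $h=m-1$, $D=1$, $G=1$, so $D\ge G$ and $\beta=1/q$, hence $m/t\ge1/q$.
\end{itemize}

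For (iii):
\begin{itemize}
\item \emph{The case $q=\infty$.} Use Theorem~\ref{thm:B} with $r_{m,p}$; we need $t=\infty$, i.e. the bound with $\|u\|_\infty^m$ and $p\le m$. Since $\ell_p$-balls for $p\le m$ satisfy $\max|a_\alpha|\le C\|P\|_p$ dimension-free, applying the $p=m$ case via $\|P\|_m\le\|P\|_p$ gives $\max|a_\alpha|\le m\|P\|_m\le m\|P\|_p$, which is what is needed. Optimality is trivial.
\item \emph{The case $2\le q<\infty$, $D\ge G$, sufficiency.} Apply the same squaring trick starting from the $q=2$ estimate of (i), where $\frac1{t_*}=\frac1p-\frac1{2m}$, together with the max bound $\max|a_\alpha|\le m\|P\|_p$. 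Taking the weight $|u|^{q/2}$ gives $\frac1t=\frac2q\bigl(\frac1p-\frac1{2m}\bigr)$.
\item \emph{The case $2\le q<\infty$, $D\ge G$, optimality.} Equation~\eqref{eq:beta-case-DgeG} gives $\beta=G/q$, so $m/t\ge G/q$, which is equality.
\end{itemize}

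The main obstacle is the sufficiency step for $q<q_m$ in (i), which needs a clean product-weight Hölder argument. I would first try writing $u=vw$ with $\|v\|_{r(p,\infty)}\|w\|_{mq'}$-type splits and checking that the multinomial counts stay bounded.
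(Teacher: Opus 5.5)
Your overall plan (diagonal transfer for sufficiency, flat weights plus dimensional lower bounds for optimality) is the right one, but two steps fail as written.

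\emph{The $w$-factor in (i) for $1\le q<q_m$.} The bound you propose, $\sum_\alpha|w^\alpha|^\theta\le C\|w\|_{m\theta}^{m\theta}$, is false. For $w=(1,\ldots,1)\in\C^n$ the left side is $\binom{n+m-1}{m}\asymp n^m$, while the right side is $Cn$. Your bookkeeping, in which $1/t_*$ exceeds $1/p$ by exactly $1/h:=1/q-1/q_m$, is correct, but it forces the norm on $w$ to be $\ell_h$ itself, not $\ell_{mh}$ or $\ell_{mq'}$. The inequality you need is the multinomial bound $\sum_{|\alpha|=m}|w^\alpha|^h\le\bigl(\sum_j|w_j|^h\bigr)^m$, i.e.\ $\|(w^\alpha)_{|\alpha|=m}\|_{\ell_h}\le\|w\|_h^m$. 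Take $v_j=|u_j|^{t_*/p}$ and $w_j=|u_j|^{t_*/h}$; then $|u_j|=v_jw_j$ and $\|v\|_p\|w\|_h=\|u\|_{t_*}$, because $1/t_*=1/p+1/h$. H\"older in $\alpha$ with $1/q=1/q_m+1/h$, together with the Bohnenblust--Hille source (Corollary~\ref{whl:cor:BH}, weight in $\ell_p$) for the $v$-factor, then closes the step. This is exactly the paper's argument, so there is no real obstacle once the correct bound is used.

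\emph{The maximum-coefficient bound in (iii).} For $p\le m$ one has $B_{\ell_p^n}\subset B_{\ell_m^n}$, hence $\|P\|_p\le\|P\|_m$; your inequality $\|P\|_m\le\|P\|_p$ goes the wrong way. The resulting claim $\max_\alpha|a_\alpha|\le m\|P\|_p$ is false for $p<m$: for $P=z_1\cdots z_m$ one has $\|P\|_p=m^{-m/p}$, so $m\|P\|_p=m^{1-m/p}<1$. The critical branch of Theorem~\ref{thm:B} does not help either, since $r_{m,p}<\infty$ for $p<m$ and $\|u\|_{r_{m,p}}\ge\|u\|_\infty$. The correct input is the Cauchy estimate on polydiscs inside $B_{\ell_p^n}$ (Corollary~\ref{whl:cor:unweighted-max}): $\max_\alpha|a_\alpha|\le m^{m/p}\|P\|_p$. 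This gives $\max_\alpha|a_\alpha u^\alpha|\le m^{m/p}\|u\|_\infty^m\|P\|_p$. The same constant $m^{m/p}$ must replace $m$ in your squaring argument for finite $q$ in (iii). Only dimension-independence matters, so nothing else changes.

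With these repairs the proposal is correct, and it differs from the paper in three places.
\begin{itemize}
\item For necessity in (i), you use block embeddings of the full sign form ($d=m$ in Theorem~\ref{thm:A}) instead of Bayart's unimodular polynomial $U_n$. Both give the same exponent $\frac{m+1}2-\frac mp$.
\item For sufficiency in (ii) with $q\ge2$, your squaring trick is Proposition~\ref{whl:prop:qgt2} at $p=m$, whereas the paper transfers directly from the source $s=mq/(q-1)\in(m,2m]$. The two exponents coincide at $p=m$, and your version treats (ii) and (iii) uniformly.
\item For necessity in (ii) with $q\ge2$, you invoke Theorem~\ref{thm:C}, while the paper only needs $P_n=\sum_j z_j^m$ with $\|P_n\|_m=1$. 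That is essentially the same diagonal example that realizes $\beta_{m,m}(q)=1/q$.
\end{itemize}
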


For
\[
2<p<m,\qquad 2<q<\infty,\qquad D<G,
\]
the coefficient growth exponent in Theorem~\ref{thm:C} yields
a necessary condition for a dimension-free weighted coefficient
estimate. A sufficient condition follows from the quadratic
coefficient estimate and the maximum-coefficient estimate. Both
conditions are stated in Section~\ref{sec:remaining}.

\section{Notation and common parameters}\label{sec:notation}

All scalar-valued polynomials and multilinear forms are over $\mathbb C$ unless the scalar field $\mathbb K\in\{\mathbb R,\mathbb C\}$ is displayed. For an $m$-homogeneous polynomial
\[
 P(z)=\sum_{|\alpha|=m}a_\alpha z^\alpha
 \qquad (z\in\mathbb C^n),
\]
we write
\[
 \|P\|_p:=\sup_{\|z\|_p\le1}|P(z)|.
\]
The principal symbols are collected below; auxiliary symbols used only in one proof are defined locally.

\begin{center}
\renewcommand{\arraystretch}{1.16}
\begin{tabularx}{\textwidth}{@{}l X@{}}
\toprule
Symbol & Meaning \\
\midrule
$\sK^{\mathbb K}_{m,\mathbf p}(n,N)$ & Smallest norm of an $m$-linear form on the indicated $\ell_{p_j}^n$ spaces with at least $N$ active unimodular coefficients. \\
$\cK^{\mathbb K}_{m,d,\mathbf p}(n)$ & The preceding quantity with $N=\lfloor n^d\rfloor$. \\
$\alpha_{m,d}(\mathbf p)$ & Sparse KSZ growth exponent in Theorem~\ref{thm:A}. \\
$\mathfrak A^m_{p,r}(n)$ & Polynomial coefficient/sup-norm equivalence constant. \\
$\beta_{m,p}(r)$ & Its polynomial growth exponent in Theorem~\ref{thm:C}. \\
$q(m,s)$ & Classical polynomial Hardy--Littlewood coefficient exponent associated with $s\ge m$. \\
$\mathcal W_{m,p,s}(P)$ & Weighted Hardy--Littlewood coefficient functional in Theorem~\ref{thm:B}. \\
$r(p,s)$ & Diagonal weight exponent given by $1/r=1/p-1/s$. \\
\bottomrule
\end{tabularx}
\end{center}

\section{Proof of Theorem \ref{thm:A}: sparse Kahane--Salem--Zygmund law}\label{sec:proof-A}

\subsection{Classical coefficient estimates}

For an $m$-linear form
\[
 T:\ell_{p_1}^n\times\cdots\times\ell_{p_m}^n\longrightarrow\K
\]
write $a_{i_1,\ldots,i_m}:=T(e_{i_1},\ldots,e_{i_m})$.

For $\K=\R$, the complex coefficient estimates used below are transferred by canonical complexification; the resulting constants depend only on the fixed parameters and do not affect any power of $n$.

\begin{lemma}\label{lem:l2-critical}
Let $q_1,\ldots,q_m\in[2,\infty]$ satisfy
\[
 \sum_{j=1}^m\frac1{q_j}=\frac12.
\]
Then there is $C=C(m,q_1,\ldots,q_m)\ge1$ such that every $m$-linear form
$T:\ell_{q_1}^n\times\cdots\times\ell_{q_m}^n\to\K$ satisfies
\begin{equation}\label{eq:l2-critical}
 \left(\sum_{\mathbf i\in[n]^m}|a_{\mathbf i}|^2\right)^{1/2}
 \le C\|T\|.
\end{equation}
\end{lemma}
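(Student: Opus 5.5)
The plan is to obtain \eqref{eq:l2-critical} as the endpoint case $\sum_j 1/q_j=\frac12$ of the Hardy--Littlewood/Praciano-Pereira inequality for multilinear forms with mixed exponents. For $|1/\mathbf q|:=\sum_j1/q_j\le\frac12$ that inequality gives
\[
\Bigl(\sum_{\mathbf i}|a_{\mathbf i}|^{\rho}\Bigr)^{1/\rho}\le C\|T\|,
\qquad
\rho=\frac{2m}{m+1-2|1/\mathbf q|},
\]
and at $|1/\mathbf q|=\frac12$ we get $\rho=2m/m=2$. Since this is classical (Praciano-Pereira; the anisotropic version is due to Albuquerque, Bayart, Pellegrino and Seoane), one option is simply to cite it. I would still outline a self-contained argument, because the constant must depend only on $m$ and the $q_j$, and the same scheme is reused later for coordinate restrictions.

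\textbf{Step 1 (case $m=1$, and the duality setup).} For $m=1$ we have $q_1=2$, and $\|a\|_2=\|T\|$ by $\ell_2$-duality. In general, fix a coordinate $k$ and view $T$ as the $(m-1)$-linear map
\[
S_k:\prod_{j\ne k}\ell_{q_j}^n\to\ell_{q_k'}^n,
\qquad
S_k(x)_{i_k}=T(x^{(1)},\ldots,e_{i_k},\ldots,x^{(m)}),
\]
which satisfies $\|S_k\|=\|T\|$. Because $q_k\ge2$, we have $q_k'\le2$, so the target has cotype $2$.

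\textbf{Step 2 (mixed $\ell_2$ estimates via Khinchin).} Take independent Steinhaus (or Rademacher) sequences in the coordinates $j\ne k$, scaled to lie in the unit balls of $\ell_{q_j}^n$. Apply the multilinear Khinchin--Kahane inequality together with cotype $2$ of $\ell_{q_k'}$. This should give, for each $k$, a mixed-norm inequality in which the index $i_k$ carries an exponent $\lambda_k\le 2$ and the remaining indices carry an $\ell_2$ sum:
\[
\Bigl(\sum_{i_k}\Bigl(\sum_{\hat{\mathbf i}_k}|a_{\mathbf i}|^2\Bigr)^{\lambda_k/2}\Bigr)^{1/\lambda_k}\le C\|T\|.
\]
Here $1/\lambda_k$ is determined by the constraint $\sum_j1/q_j=\frac12$; at the critical value one expects $\lambda_k=2$ directly, at least for the coordinate where the weight $1/q_k$ is concentrated. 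The factor $n^{1/q_j}$ lost in rescaling the random vectors has to be recovered by Hölder's inequality in the sums. This is exactly where the condition $\sum 1/q_j=\frac12$ enters: the powers of $n$ cancel precisely.

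\textbf{Step 3 (interpolation of the mixed norms).} Combine the $m$ mixed inequalities by the mixed-norm Hölder (Minkowski) interpolation used in the Bohnenblust--Hille proof. With equal weights in the $q_j$ this yields the $\ell_\rho$ bound with $\rho=2$. When some $q_j=\infty$, the argument degenerates to Bohnenblust--Hille on those coordinates; the exponent bookkeeping covers this case as well.

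The main obstacle is Step 2 at the exact endpoint. There the Hölder exponents become degenerate, equal to $2$ or $\infty$, and one must check that no logarithmic loss arises. If the direct exponent count fails, I would fall back on the cited anisotropic Hardy--Littlewood theorem, whose constant is independent of $n$ on the whole region $|1/\mathbf q|\le\frac12$.
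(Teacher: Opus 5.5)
Your primary route is correct and is essentially the paper's. The paper also just cites the critical anisotropic Hardy--Littlewood inequality, taking it from the branch $\frac12\le\sigma<1$ (Dimant--Sevilla-Peris, exponent $(1-\sigma)^{-1}$). You take it from the branch $\sigma\le\frac12$ (Praciano-Pereira, Albuquerque--Bayart--Pellegrino--Seoane, i.e.\ Lemma~\ref{lem:HL-low-sum} at $\sigma=\frac12$), and both branches contain the endpoint with exponent $2$.

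Do not rely on the self-contained sketch in Steps~2--3, however. Once the random sign vectors are rescaled into the unit balls of $\ell_{q_j}^n$, multilinear Khinchin and cotype $2$ in $\ell_{q_k'}^n$ only yield
\[
\|a\|_2\le\Bigl\|\bigl(\|(a_{\mathbf i})_{\hat{\mathbf i}_k}\|_2\bigr)_{i_k}\Bigr\|_{q_k'}\lesssim n^{1/2-1/q_k}\|T\|.
\]
No H\"older bookkeeping recovers the lost power, because this $\ell_{q_k'}(\ell_2)$ mixed norm is itself not bounded by $C\|T\|$ uniformly in $n$: for $T(x,y)=\sum_i x_iy_i$ on $\ell_4^n\times\ell_4^n$ it equals $n^{3/4}=n^{1/4}\|T\|$. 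So the scheme is dimension-free only in the degenerate case $q_k=2$, and the citation is what actually carries your proof.
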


\begin{proof}
This is the critical anisotropic Hardy--Littlewood inequality; see \cite[Proposition~4.1]{DimantSevillaPeris}.  The dependence of the constant on the fixed exponents is harmless in all applications below.
\end{proof}

\begin{lemma}\label{lem:sectional-coeff}
Let $1\le k\le m$ and let $q_1,\ldots,q_k\in[2,\infty]$ satisfy
\[
 \frac12\le \rho:=\sum_{j=1}^k\frac1{q_j}<1.
\]
Put $1/r:=1-\rho$.  Then there is $C=C(k,q_1,\ldots,q_k)$ such that every $k$-linear form
$A:\ell_{q_1}^n\times\cdots\times\ell_{q_k}^n\to\K$ satisfies
\begin{equation}\label{eq:sectional-coeff}
 \left(\sum_{\mathbf i\in[n]^k}|A(e_{i_1},\ldots,e_{i_k})|^r\right)^{1/r}
 \le C\|A\|.
\end{equation}
\end{lemma}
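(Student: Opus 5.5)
The plan is to prove the estimate by induction on $k$, reducing to Lemma~\ref{lem:l2-critical} through a mixed-norm argument. This is the anisotropic Hardy--Littlewood inequality in the regime $\frac12\le\rho<1$. Its exponent $r=1/(1-\rho)$ is the one obtained when the Hardy--Littlewood/Praciano-Pereira exponent is computed with the extra factor present, as in the classical Hardy--Littlewood theorem.

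In fact the cleanest route is to cite the literature (Hardy--Littlewood for $k=2$, Praciano-Pereira and Dimant--Sevilla-Peris \cite{DimantSevillaPeris} for general anisotropic exponents). Still, I would sketch a self-contained argument along the following lines.

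First, I will dualize in the last variable. For fixed $(i_1,\ldots,i_{k-1})$, the vector $(A(e_{i_1},\ldots,e_{i_{k-1}},e_j))_j$ is the coefficient vector of the linear functional $A(e_{i_1},\ldots,e_{i_{k-1}},\cdot)$ on $\ell_{q_k}^n$. Its $\ell_{q_k^*}$ norm therefore equals the norm of that functional. This allows the mixed-norm technique of Bayart--Pellegrino--Seoane:
\begin{enumerate}
\item[(i)] Prove, for each coordinate $j$, a mixed inequality in which the $j$-th index carries an exponent $s_j$ and the remaining indices carry $2$. The $\ell_2$ part comes from Khinchin's inequality (or, in the boundary case $\rho=\frac12$, directly from Lemma~\ref{lem:l2-critical}).
\item[(ii)] Combine these mixed inequalities by Minkowski's and Hölder's inequalities for mixed $\ell_{\mathbf s}$ norms, in the form of the Hölder interpolation lemma for mixed sums, to land at the diagonal exponent $r$.
\end{enumerate}

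Concretely, for step (i) I would choose auxiliary $\tilde q_j\ge q_j$ with $\sum_j 1/\tilde q_j=\frac12$. Lemma~\ref{lem:l2-critical} bounds the $\ell_2$ norm of the coefficients of $A$ restricted to $\ell_{\tilde q_1}\times\cdots\times\ell_{\tilde q_k}$. Viewing $\ell_{q_j}\to\ell_{\tilde q_j}$ coordinatewise then gives control of $\ell_2$ sums in all but one variable, with $\ell_{q_j^*}$-type control from the functional bound in the remaining variable.

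Step (ii) is the main obstacle. One must choose the interpolation weights $\theta_j$ so that the target exponent $r$ satisfies both $1/r=\sum\theta_j/s_j$ and the corresponding equalities in every coordinate. The exponent bookkeeping is exactly what forces $1/r=1-\rho$, and checking it in the anisotropic case is where the argument can fail. If it becomes unwieldy, I would instead invoke \cite[Proposition~4.1]{DimantSevillaPeris} together with its extension to $\rho<1$ from the anisotropic Hardy--Littlewood literature directly. The constant depends only on $k$ and the $q_j$, which suffices for all later uses.
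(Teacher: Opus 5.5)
Your fallback, invoking \cite[Proposition~4.1]{DimantSevillaPeris}, is exactly the paper's proof, and it suffices on its own. The paper uses that proposition for the whole range $\frac12\le\rho<1$, so no separate ``extension to $\rho<1$'' is needed; Praciano-Pereira concerns the complementary range $\rho\le\frac12$.

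The self-contained sketch, however, has genuine gaps and should not be offered as a proof.
\begin{itemize}
\item In step (i) the comparison runs the wrong way. Since $\tilde q_j\ge q_j$, one has $B_{\ell_{q_j}^n}\subset B_{\ell_{\tilde q_j}^n}$. So Lemma~\ref{lem:l2-critical} bounds the coefficients by $\|A\|_{\cL(\ell_{\tilde q_1}^n,\ldots,\ell_{\tilde q_k}^n)}\ge\|A\|$, and returning to $\|A\|$ costs $n^{\rho-1/2}$. This is precisely the dimensional loss the paper exploits in Proposition~\ref{prop:global-lower}.
\item Khinchin in the frozen variables, followed by duality in the $j$-th variable, loses $\prod_{l\ne j}\|\varepsilon^{(l)}\|_{q_l}=n^{\rho-1/q_j}$ in the same way.
\item Worse, the mixed inequalities you want (exponent $2$ in every index except the $j$-th) are false whenever $\sum_{l\ne j}1/q_l>\frac12$. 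For $A=x^{(j)}_1\sum_{i=1}^n\prod_{l\ne j}x^{(l)}_i$ one has $\|A\|=n^{1-\sum_{l\ne j}1/q_l}$. The $\ell_2$ sum over the indices $l\ne j$ at $i_j=1$ equals $n^{1/2}$, whatever the outer exponent. This happens inside the lemma's range, e.g.\ for $k=3$ and $q_1=q_2=q_3=q\in(3,4)$.
\end{itemize}

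Step (ii) cannot close even if such mixed estimates were available. You want to reach $(r,\ldots,r)$ from exponent vectors $(2,\ldots,2,s_j,2,\ldots,2)$ with H\"older weights $\theta_j$. This requires $\frac1r=\frac{\theta_j}{s_j}+\frac{1-\theta_j}{2}$ for every $j$, i.e.\ $\theta_j=\frac{\rho-1/2}{1/2-1/s_j}$. For $\rho>\frac12$ this forces $s_j>2$ and hence $\theta_j\ge2\rho-1$. Then $\sum_j\theta_j=1$ forces $\rho\le\frac{k+1}{2k}$.

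So the Bayart--Pellegrino--Seoane mixing scheme, the natural tool for $\rho\le\frac12$, cannot produce the exponent $1/(1-\rho)$ near $\rho=1$. The regime $\frac12\le\rho<1$ needs a different mechanism, which is why the paper simply cites \cite[Proposition~4.1]{DimantSevillaPeris}.
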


\begin{proof}
This is the Hardy--Littlewood coefficient inequality in the range $1/2\le\rho<1$; see \cite[Proposition~4.1]{DimantSevillaPeris}.  The exponent $r=(1-\rho)^{-1}$ is optimal.
\end{proof}

For $p,r\ge1$, let $A^m_{p,r}(n)$ denote the least constant for which
\begin{equation}\label{eq:Apr-def}
 \left(\sum_{\mathbf i\in[n]^m}|a_{\mathbf i}|^r\right)^{1/r}
 \le A^m_{p,r}(n)\|T\|_{\cL({}^m\ell_p^n)}
\end{equation}
holds for every complex $m$-linear form $T$.

\begin{lemma}\label{lem:Bayart-sectional}
Let $1\le h\le m-1$, let $h\le p\le h+1$, and define $r$ by
\begin{equation}\label{eq:r-sectional}
 \frac1r=1-\frac hp.
\end{equation}
Then
\begin{equation}\label{eq:sectional-A}
 A^m_{p,r}(n)\lesssim n^{(m-h)/r}.
\end{equation}
The same polynomial estimate holds over $\R$.
\end{lemma}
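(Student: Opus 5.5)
The plan is to freeze all but $h$ of the variables and apply the sectional Hardy--Littlewood inequality (Lemma~\ref{lem:sectional-coeff}) to the resulting $h$-linear sections. The remaining $m-h$ indices are then summed crudely, which costs a power of $n$.

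\textbf{Step 1: the $h$-linear sections.} Since $h\le p\le h+1$, the choice $q_1=\cdots=q_h=p$ gives
\[
\rho=\frac hp\in\Bigl[\frac h{h+1},1\Bigr].
\]
For $h\ge1$ we have $h/(h+1)\ge1/2$. Thus $1/2\le\rho<1$ whenever $p>h$, and then Lemma~\ref{lem:sectional-coeff} applies with exactly $1/r=1-h/p$. The endpoint $p=h$ forces $r=\infty$; there the estimate is trivial, since $|a_{\mathbf i}|\le\|T\|$ and $n^{0}=1$.

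For fixed indices $j_{h+1},\ldots,j_m\in[n]$, consider the $h$-linear form
\[
A_{\mathbf j}(x^{(1)},\ldots,x^{(h)}):=T\bigl(x^{(1)},\ldots,x^{(h)},e_{j_{h+1}},\ldots,e_{j_m}\bigr).
\]
It acts on $(\ell_p^n)^h$ with $\|A_{\mathbf j}\|\le\|T\|$, because $\|e_j\|_p=1$. Lemma~\ref{lem:sectional-coeff} then gives
\[
\sum_{i_1,\ldots,i_h}|a_{i_1,\ldots,i_h,\mathbf j}|^r\le C^r\|T\|^r.
\]

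\textbf{Step 2: summing over the frozen indices.} Sum the bound from Step~1 over the $n^{m-h}$ choices of $\mathbf j$. This yields
\[
\sum_{\mathbf i\in[n]^m}|a_{\mathbf i}|^r\le C^r n^{m-h}\|T\|^r,
\]
so $A^m_{p,r}(n)\le C\,n^{(m-h)/r}$.

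The real case follows because Lemma~\ref{lem:sectional-coeff} holds over $\K=\R$ as well. Alternatively, one can pass through the complexification, whose norm is at most a constant depending only on $m$ and $p$ times the real norm.

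\textbf{Main point.} The only thing to verify carefully is that the hypotheses of Lemma~\ref{lem:sectional-coeff} hold, namely $1/2\le h/p<1$, together with the degenerate endpoint $p=h$. For $h=1$ and $1<p\le2$ we have $\rho=1/p\in[1/2,1)$, so that case is covered as well. The argument is otherwise elementary.
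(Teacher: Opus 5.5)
Your argument is correct, but it takes a different route from the paper. The paper gives no argument of its own: it simply invokes \cite[Proposition~5.6]{BayartJEMS} with $k=m-h$. You instead derive the estimate from Lemma~\ref{lem:sectional-coeff} inside the paper. You freeze the last $m-h$ variables at basis vectors and apply the $h$-linear Hardy--Littlewood inequality with $q_1=\cdots=q_h=p$ to each section $A_{\mathbf j}$, using $\|A_{\mathbf j}\|\le\|T\|$. The frozen indices then cost a factor $n^{m-h}$ in the $r$-th power sum.

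What your route buys is self-containedness:
\begin{itemize}
\item the only external input becomes the Dimant--Sevilla-Peris inequality already behind Lemma~\ref{lem:sectional-coeff};
\item the implicit constant is visibly the $h$-linear Hardy--Littlewood constant, independent of $n$;
\item the real case is immediate, since Lemma~\ref{lem:sectional-coeff} is stated over $\K$.
\end{itemize}
The citation is shorter but leaves the reader to look up why the bound holds.

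One small correction concerns $h=1$ and $1<p<2$. There you cannot literally appeal to Lemma~\ref{lem:sectional-coeff}, since it requires $q_j\in[2,\infty]$; your remark that $\rho=1/p\in[1/2,1)$ does not settle this. In that range, however, each section $A_{\mathbf j}$ is a linear functional on $\ell_p^n$. Duality gives $\|(a_{i,\mathbf j})_{i}\|_{p'}=\|A_{\mathbf j}\|$ with $p'=r$, so the same summation goes through with constant $1$. The case $p=1$ gives $r=\infty$ and is trivial. This range is not needed in Theorem~\ref{thm:C} anyway, where $p\ge 2$.
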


\begin{proof}
This is \cite[Proposition~5.6]{BayartJEMS} with $k=m-h$.  For real forms, canonical complexification changes the norm by at most a constant depending only on $m$.
\end{proof}

\begin{lemma}\label{lem:HL-low-sum}
Let $p_1,\ldots,p_m\in[2,\infty]$ and
\[
 \sigma=\sum_{j=1}^m\frac1{p_j}\le\frac12.
\]
Define $q$ by
\begin{equation}\label{eq:q-HL-aniso}
 \frac1q=\frac{m+1-2\sigma}{2m}.
\end{equation}
Then every $m$-linear form $T:\ell_{p_1}^n\times\cdots\times\ell_{p_m}^n\to\K$ satisfies
\begin{equation}\label{eq:HL-low-sum}
 \left(\sum_{\mathbf i\in[n]^m}|a_{\mathbf i}|^q\right)^{1/q}
 \le C_{m,\mathbf p}\|T\|.
\end{equation}
\end{lemma}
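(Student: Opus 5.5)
This is the anisotropic Hardy--Littlewood inequality in the range $\sigma\le 1/2$. I would derive it by interpolating two endpoint estimates, or else cite it directly from \cite{DimantSevillaPeris} (anisotropic Hardy--Littlewood/Praciano-Pereira) or \cite{AlbuquerqueRezende}.

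\emph{Approach via Minkowski-type mixed estimates.} For each fixed $k\in[m]$, I would first prove the mixed-norm bound
\[
\Bigl(\sum_{i_k}\Bigl(\sum_{\hat{\mathbf i}}|a_{\mathbf i}|^2\Bigr)^{\lambda/2}\Bigr)^{1/\lambda}\le C\|T\|,
\qquad \frac1\lambda=\frac12+\frac{1}{2}-\sigma=1-\sigma\ \text{(adjusted)},
\]
where $\hat{\mathbf i}$ denotes all indices except $i_k$. The natural version has an $\ell_2$ norm in the $m-1$ variables $\neq k$ and an $\ell_{\lambda}$ norm in $i_k$, with
\[
\frac1\lambda=\frac12+\Bigl(\frac12-\sigma\Bigr)\cdot\text{(appropriate factor)}.
\]
I would fix the exponent as follows. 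Freeze $x^{(k)}$ and view $T$ as an $(m-1)$-linear form. Khinchin's inequality (Kahane's inequality for the multilinear Rademacher averages) in the $m-1$ variables gives an $\ell_2$ bound on $\bigl(\sum_{i_k}a_{\mathbf i}x^{(k)}_{i_k}\bigr)_{\hat{\mathbf i}}$ with respect to the $\ell_\infty$-type norms. The $\ell_{p_j}$ deficit $1/p_j$ is then absorbed by H\"older's inequality, exactly as in the proofs of Praciano-Pereira. Duality in the $k$-th slot then converts $\ell_{p_k}$ boundedness into an $\ell_{\lambda}$ estimate.

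\emph{Interpolation step.} Next I would apply the mixed-norm Hölder inequality of Bennett/Blei to the $m$ estimates, one for each $k$ (each $\ell_\lambda$ in one coordinate and $\ell_2$ in the rest). This yields the symmetric exponent $q$ with
\[
\frac1q=\frac1m\Bigl(\frac1\lambda+\frac{m-1}{2}\Bigr).
\]
With $1/\lambda=1-\sigma$ one gets
\[
\frac1q=\frac{1-\sigma+(m-1)/2}{m}=\frac{m+1-2\sigma}{2m},
\]
which matches \eqref{eq:q-HL-aniso}. So the target exponent for each mixed estimate is $1/\lambda=1-\sigma$.

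\emph{Main obstacle.} The hard part is establishing the mixed $(\ell_{\lambda},\ell_2)$ estimate with $1/\lambda=1-\sigma$ when the $p_j$ are unequal. Concretely, one must prove
\[
\Bigl(\sum_{i_k}\Bigl(\sum_{\hat{\mathbf i}}|a_{\mathbf i}|^2\Bigr)^{\lambda/2}\Bigr)^{1/\lambda}\lesssim\|T\|.
\]
I would do this by induction on the number of finite $p_j$, following \cite{AlbuquerqueRezende}. Each time one replaces an $\ell_\infty$ variable by $\ell_{p_j}$, a H\"older step with exponent $p_j$ lowers the outer exponent by $1/p_j$, starting from the $\ell_1$–$\ell_2$ Littlewood mixed inequality at $\sigma=0$. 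The condition $\sigma\le1/2$ is exactly what keeps $\lambda\le2$, so that Minkowski's inequality can be applied in the right direction.

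For $\K=\R$, complexification changes the norm only by a constant depending on $m$.
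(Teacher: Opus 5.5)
The paper gives no argument for this lemma; it simply cites the anisotropic Praciano-Pereira/Hardy--Littlewood inequality \cite{PracianoPereira,AronNunezPellegrinoSerrano}. Your fallback of citing it is therefore the paper's route.

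Your self-contained derivation has the right skeleton. Suppose that for every $k\in[m]$
\[
\Bigl(\sum_{i_k}\Bigl(\sum_{\hat{\mathbf i}}|a_{\mathbf i}|^2\Bigr)^{\lambda/2}\Bigr)^{1/\lambda}\le C\|T\|,\qquad \frac1\lambda=1-\sigma .
\]
Then Minkowski (using $\lambda\le2$, i.e.\ $\sigma\le\frac12$) and H\"older for mixed sums give exactly $\frac1q=\frac1m\bigl(1-\sigma+\frac{m-1}2\bigr)=\frac{m+1-2\sigma}{2m}$. These mixed estimates are true.

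\textbf{The gap is in how you propose to prove them.} Write $b_{i_k}:=\bigl(\sum_{\hat{\mathbf i}}|a_{\mathbf i}|^2\bigr)^{1/2}$ and let $\lambda_0$ be the outer exponent before a given induction step. The H\"older (diagonal-weight) step works only for the outer slot. If slot $k$ passes from $c_0$ to $\ell_{p_k}$, then for $\|w\|_{p_k}\le1$ the form $T(\ldots,D_w\,\cdot,\ldots)$ has coefficients $a_{\mathbf i}w_{i_k}$. Moreover $\sup_w\bigl(\sum_{i_k}|w_{i_k}|^{\lambda_0}b_{i_k}^{\lambda_0}\bigr)^{1/\lambda_0}=\|b\|_\lambda$ with $\frac1\lambda=\frac1{\lambda_0}-\frac1{p_k}$, as you want.

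For an inner slot $j\ne k$ this breaks down. The weight $w_{i_j}$ sits inside the $\ell_2$ sum, and the supremum over the single vector $w$ stays outside the sum over $i_k$. No H\"older manipulation can then lower the outer exponent. In fact the array inequality such a step would have to establish is false. Take $m=2$, $k=1$, $j=2$ and the all-ones $n\times N$ block. Then
\[
\sup_{\|w\|_{p_2}\le1}\bigl\|(a_{il}w_l)\bigr\|_{\ell_{\lambda_0}(\ell_2)}=n^{1/\lambda_0}N^{1/2-1/p_2},
\qquad
\|(a_{il})\|_{\ell_{\lambda}(\ell_2)}=n^{1/\lambda_0-1/p_2}N^{1/2},
\]
and the ratio $(N/n)^{1/p_2}$ is unbounded.

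Your first sketch has the same defect. Khinchin in the slots $j\ne k$ needs Rademacher vectors, whose $\ell_{p_j}^n$-norm is $n^{1/p_j}$. The naive bound therefore carries a factor $n^{\sigma-1/p_k}$, and ``absorbing the deficit by H\"older'' is precisely the unproved step. The exponents you mark ``adjusted'' are reverse-engineered from the target rather than produced by the mechanism. Also, in this paper \cite{AlbuquerqueRezende} is invoked only for the unimodular upper bound of Lemma~\ref{lem:classical-KSZ}, not for coefficient inequalities.

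What is missing is a non-H\"older ingredient for the inner $\ell_{p_j}$-slots. For $m=2$ one option is the Bennett--Carl theorem: $\ell_{p_2^*}\hookrightarrow\ell_2$ is $(p_2^*,1)$-summing. By the inclusion theorem for $(s,r)$-summing operators it is then also $(\lambda,p_1^*)$-summing, with $\frac1\lambda=1-\frac1{p_1}-\frac1{p_2}$. Apply this to the rows $(a_{il})_l\in\ell_{p_2^*}$. Their weak $\ell_{p_1^*}$-norm is $\sup_{\|y\|_{p_2}\le1}\|(T(e_i,y))_i\|_{p_1^*}\le\|T\|$, so the mixed estimate follows. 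For general $m$ you need the corresponding multilinear summability and inclusion theory. Alternatively, quote the mixed (or final) anisotropic Hardy--Littlewood inequality directly, as the paper does.
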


\begin{proof}
This is the Praciano--Pereira/Hardy--Littlewood inequality in the anisotropic range $\sigma\le1/2$; see \cite{PracianoPereira,AronNunezPellegrinoSerrano}.  The exponent in \eqref{eq:q-HL-aniso} is optimal.
\end{proof}

\begin{lemma}\label{lem:classical-KSZ}
Let $m\ge2$, $p_1,\ldots,p_m\in[2,\infty]$, and $s\ge1$.  There are signs $\eps_{\mathbf i}\in\{-1,1\}$, $\mathbf i\in[s]^m$, such that
\begin{equation}\label{eq:classical-KSZ}
 \left\|\sum_{\mathbf i\in[s]^m}\eps_{\mathbf i}
 x^{(1)}_{i_1}\cdots x^{(m)}_{i_m}\right\|_{
 \cL(\ell_{p_1}^s,\ldots,\ell_{p_m}^s)}
 \le C_m s^{(m+1)/2-\sigma}.
\end{equation}
\end{lemma}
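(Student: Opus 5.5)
The plan is to reduce to the isotropic $\ell_\infty$ Kahane--Salem--Zygmund estimate and then pass to general exponents through the standard comparison of norms on the unit balls. First I will establish the case $p_1=\cdots=p_m=\infty$, that is, $\sigma=0$. Fix the form $T_\eps$ with random Rademacher signs $\eps_{\mathbf i}$. For fixed $x^{(1)},\ldots,x^{(m)}$ in the unit ball of $\ell_\infty^s$, the value $T_\eps(x^{(1)},\ldots,x^{(m)})$ is a Rademacher sum whose coefficient vector has $\ell_2$ norm at most $s^{m/2}$. Hoeffding's inequality therefore gives subgaussian tails with parameter $s^{m/2}$.

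Next I will discretize. Take a $\frac{1}{2m}$-net of the unit ball of $\ell_\infty^s$, real or complex, of cardinality at most $(Cm)^{2s}$. Multilinearity gives the usual absorption: if $M$ is the supremum over the product of the nets, then
\[
\|T_\eps\|\le M+\sum_j\tfrac{1}{2m}\|T_\eps\|,
\]
so $\|T_\eps\|\le 2M$. A union bound over the $(Cm)^{2sm}$ points, together with the subgaussian tail at level $t=C'_m s^{m/2}\sqrt{s}$, produces signs with
\[
\|T_\eps\|_{\cL({}^m\ell_\infty^s)}\le C_m s^{(m+1)/2}.
\]
Equivalently, Bernstein's inequality or Kahane's theorem \cite{Kahane1985,Boas2000} could be cited directly. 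Since this step is the only genuinely probabilistic one, it is also where the constant $C_m$ has to be tracked. The dependence on $m$ comes from the cardinality of the net, and it stays independent of $s$ and of $\mathbf p$.

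Then I transfer to general $\mathbf p$ with the same signs. For $x^{(j)}$ in the unit ball of $\ell_{p_j}^s$, I would write $x^{(j)}=s^{-1/p_j}\cdot(s^{1/p_j}x^{(j)})$, but this scaling runs in the wrong direction, since $\|x\|_\infty\le\|x\|_{p}$ only gives the bound $s^{(m+1)/2}$. The factor $s^{-\sigma}$ requires a better estimate. The plan is to redo the net argument on the balls $B_{\ell_{p_j}^s}$ themselves. For $x^{(j)}\in B_{\ell_{p_j}^s}$, the coefficient vector $\bigl(x^{(1)}_{i_1}\cdots x^{(m)}_{i_m}\bigr)_{\mathbf i}$ has $\ell_2$ norm
\[
\prod_j\|x^{(j)}\|_2\le\prod_j s^{1/2-1/p_j}=s^{m/2-\sigma},
\]
using $p_j\ge2$. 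Nets of $B_{\ell_{p_j}^s}$ in their own norm still have cardinality $e^{O(s)}$. The absorption step works because a net point approximates $x^{(j)}$ in the $\ell_{p_j}$ norm, and multilinearity then bounds the error by $\|T_\eps\|$ on the same product of balls. The union bound thus gives $\|T_\eps\|\le C_m s^{m/2-\sigma}\sqrt{s}$, which is \eqref{eq:classical-KSZ}.

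Finally, a single choice of signs serves all $\mathbf p$ simultaneously, with a constant independent of $\mathbf p$, because the failure probability for each fixed $\mathbf p$ is at most $e^{-s}$. The statement only needs existence for each fixed $\mathbf p$, however, so no uniformity argument is required.
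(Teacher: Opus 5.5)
Your proof is correct. It differs from the paper mainly in that the paper gives no argument at all: it quotes the estimate from \cite{AlbuquerqueRezende,PellegrinoSerranoSilva}. What you supply is the standard self-contained Kahane--Salem--Zygmund argument, run directly on the balls $B_{\ell_{p_j}^s}$. It has four steps:
\begin{itemize}
\item Hoeffding for a fixed tuple, with variance proxy $\prod_j\|x^{(j)}\|_2^2\le s^{m-2\sigma}$; this is the only place where $p_j\ge2$ enters.
\item $\frac1{2m}$-nets of each ball, of cardinality at most $(1+4m)^{2s}$.
\item Absorption, giving $\|T_\eps\|\le 2M$.
\item A union bound at level $K_m s^{(m+1)/2-\sigma}$, with $K_m$ depending only on $m$.
\end{itemize}
This proves the two side claims the paper asserts without justification: that $C_m$ depends only on $m$ and not on $\mathbf p$, and that one real sign pattern works over both $\R$ and $\C$. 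The citation additionally records that the exponent is optimal, which is not part of the statement.

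Netting the $\ell_{p_j}$ balls themselves is also what avoids logarithmic loss. The paper's own machinery in Propositions~\ref{prop:rect-sign} and~\ref{prop:local-to-norm} nets tori of rectangles and then decomposes dyadically. Applied to the full box, that route would only give $s^{(m+1)/2-\sigma}(1+\log s)^m$. Your route gives the exact power, which is what the $\eta=0$ cases ($\sigma\le1/2$ and $d=m$) require. Your preliminary $\ell_\infty$ step is redundant, since it is the case $\sigma=0$ of the second argument.

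One small slip concerns your closing claim that a single sign pattern serves all $\mathbf p$ simultaneously. This does not follow from a failure probability of at most $e^{-s}$ for each fixed $\mathbf p$, because there are uncountably many $\mathbf p$. It can be repaired by a union bound over the $2^m$ configurations $p_j\in\{2,\infty\}$, followed by multilinear complex interpolation; this works because the bound is log-affine in $(1/p_1,\ldots,1/p_m)$. As you note, though, the lemma does not require it.
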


\begin{proof}
For exponents $p_j\ge2$ this is the optimal full-support anisotropic unimodular estimate; see \cite{AlbuquerqueRezende,PellegrinoSerranoSilva}.  The constant may be chosen depending only on $m$, and the same real sign form works over both scalar fields.
\end{proof}

\subsection{Lower bounds}
For the lower bounds let
\[
 T(x^{(1)},\ldots,x^{(m)})
 =\sum_{\mathbf i\in\Lambda}\eps_{\mathbf i}
 x^{(1)}_{i_1}\cdots x^{(m)}_{i_m},
 \qquad |\eps_{\mathbf i}|=1,
\]
where $\Lambda\subset[n]^m$ and $|\Lambda|\ge\lfloor n^d\rfloor$.

\begin{proposition}\label{prop:global-lower}
Assume $\sigma\ge1/2$.  Then
\begin{equation}\label{eq:global-lower}
 \|T\|\gtrsim_{m,\mathbf p}
 n^{(d+1)/2-\sigma}.
\end{equation}
Consequently $\|T\|\gtrsim n^{\max\{0,(d+1)/2-\sigma\}}$.
\end{proposition}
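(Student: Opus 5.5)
We prove \eqref{eq:global-lower} by passing from the exponents $\mathbf p$ to exponents $\mathbf q$ with $\sum_j 1/q_j=\frac12$. Then the $\ell_2$ coefficient bound of Lemma~\ref{lem:l2-critical} applies, and the only loss is the cost of changing norms on each $\ell_{p_j}^n$.

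Since $\sigma\ge\frac12$ and each $1/p_j\le\frac12$, we can choose $q_j\in[2,\infty]$ with $q_j\ge p_j$ and $\sum_j 1/q_j=\frac12$. For instance, take $1/q_j=\theta/p_j$ with $\theta=1/(2\sigma)\in(0,1]$. The condition $q_j\ge p_j$ guarantees $q_j\ge 2$.

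Next we compare norms. For $x\in\ell_{q_j}^n$, H\"older's inequality gives
\[
\|x\|_{p_j}\le n^{1/p_j-1/q_j}\|x\|_{q_j}.
\]
Hence the unit ball of $\ell_{q_j}^n$ is contained in $n^{1/p_j-1/q_j}$ times the unit ball of $\ell_{p_j}^n$. By multilinearity,
\[
\|T\|_{\cL(\ell_{q_1}^n,\ldots,\ell_{q_m}^n)}
\le n^{\sum_j(1/p_j-1/q_j)}\|T\|_{\cL(\ell_{p_1}^n,\ldots,\ell_{p_m}^n)}
= n^{\sigma-1/2}\|T\|.
\]

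Now apply Lemma~\ref{lem:l2-critical} to $T$ viewed on $\ell_{q_1}^n\times\cdots\times\ell_{q_m}^n$. The coefficients of $T$ are unimodular on $\Lambda$ and zero off $\Lambda$, so
\[
|\Lambda|^{1/2}\le C\,\|T\|_{\cL(\ell_{q_1}^n,\ldots,\ell_{q_m}^n)}\le C\,n^{\sigma-1/2}\|T\|.
\]
Since $|\Lambda|\ge\lfloor n^d\rfloor\ge n^d/2$, it follows that $\|T\|\gtrsim n^{d/2-\sigma+1/2}$, which is \eqref{eq:global-lower}. For the real case, we use the complexification remark preceding Lemma~\ref{lem:l2-critical}.

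For the "consequently" part we also need $\|T\|\ge 1$. Pick any $\mathbf i\in\Lambda$, which is nonempty since $n^d\ge1$. Then
\[
|T(e_{i_1},\ldots,e_{i_m})|=|\eps_{\mathbf i}|=1,
\]
and each $e_{i_j}$ has norm $1$, so $\|T\|\ge1$. Taking the maximum of the two bounds gives $\|T\|\gtrsim n^{\max\{0,(d+1)/2-\sigma\}}$.

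The only delicate point is choosing admissible $q_j$ with $q_j\ge p_j$, which the scaling $1/q_j=\theta/p_j$ handles. Everything else is routine.
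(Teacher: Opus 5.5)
Your proof is correct and follows the paper's argument essentially verbatim: pass to exponents $q_j\ge p_j$ with $\sum_j 1/q_j=\frac12$, apply the critical $\ell_2$ estimate of Lemma~\ref{lem:l2-critical}, pay the factor $n^{\sigma-1/2}$ for the change of norms, and use basis-vector evaluation to get $\|T\|\ge1$. Your explicit choice $1/q_j=1/(2\sigma p_j)$ supplies a concrete witness for the existence of the $q_j$, which the paper only asserts.
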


\begin{proof}
Choose $q_j\in[p_j,\infty]$ so that $\sum_j1/q_j=1/2$.  Such a choice exists because $\sigma\ge1/2$.  Lemma~\ref{lem:l2-critical} gives
\[
 |\Lambda|^{1/2}\le C_m\|T\|_{\cL(\ell_{q_1}^n,\ldots,\ell_{q_m}^n)}.
\]
For $x^{(j)}\in B_{\ell_{q_j}^n}$,
\[
 \|x^{(j)}\|_{p_j}\le n^{1/p_j-1/q_j}.
\]
Hence
\[
 \|T\|_{\cL(\ell_{q_1}^n,\ldots,\ell_{q_m}^n)}
 \le n^{\sigma-1/2}\|T\|_{\cL(\ell_{p_1}^n,\ldots,\ell_{p_m}^n)}.
\]
Since $|\Lambda|\gtrsim n^d$, the claim follows.  The estimate $\|T\|\ge1$ is obtained by evaluating $T$ at basis vectors corresponding to any active coefficient.
\end{proof}

\begin{proposition}\label{prop:section-lower}
Assume $\sigma\ge1/2$.  For every $I\in\mathcal I(\mathbf p)$,
\begin{equation}\label{eq:section-lower}
 \|T\|\gtrsim_{m,\mathbf p}
 n^{\pos{d-m+|I|}(1-\sigma_I)}.
\end{equation}
\end{proposition}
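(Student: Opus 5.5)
The plan is a pigeonhole on coordinate sections, followed by Lemma~\ref{lem:sectional-coeff} applied to one restricted form. Fix $I\in\mathcal I(\mathbf p)$ and put $k:=|I|$, $J:=[m]\setminus I$. If $d-m+k\le0$, then \eqref{eq:section-lower} reduces to $\|T\|\gtrsim1$. This follows by evaluating at basis vectors, as in Proposition~\ref{prop:global-lower}. So assume $d>m-k$.

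First, for each $\mathbf j\in[n]^J$ consider the fibre
\[
\Lambda_{\mathbf j}:=\{\mathbf i_I\in[n]^I:(\mathbf i_I,\mathbf j)\in\Lambda\}.
\]
There are at most $n^{m-k}$ choices of $\mathbf j$. Since $|\Lambda|\ge\lfloor n^d\rfloor\gtrsim n^d$, pigeonhole gives some $\mathbf j^*$ with
\[
|\Lambda_{\mathbf j^*}|\gtrsim n^{d-m+k}.
\]

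Second, freeze the variables indexed by $J$ at the basis vectors $e_{j^*_l}$; each has norm $1$ in $\ell_{p_l}^n$. This produces the $k$-linear form
\[
A(x^{(i)})_{i\in I}:=T\bigl(\ldots,x^{(i)},\ldots,e_{j^*_l},\ldots\bigr)
\]
on $\prod_{i\in I}\ell_{p_i}^n$. It satisfies $\|A\|\le\|T\|$, and its coefficients are exactly $\eps_{(\mathbf i_I,\mathbf j^*)}$ on $\Lambda_{\mathbf j^*}$ and $0$ elsewhere. Because $I\in\mathcal I(\mathbf p)$, we have $\tfrac12\le\sigma_I<1$, so Lemma~\ref{lem:sectional-coeff} applies with $q_i=p_i$ and $1/r=1-\sigma_I$. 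Since all nonzero coefficients are unimodular, it yields
\[
|\Lambda_{\mathbf j^*}|^{1-\sigma_I}\le C\|A\|\le C\|T\|.
\]
Combining this with the pigeonhole bound gives $\|T\|\gtrsim n^{(d-m+k)(1-\sigma_I)}$, which is \eqref{eq:section-lower}.

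There are two minor points to check. One is the case $k=m$, i.e.\ $I=[m]$: then $J=\varnothing$, there is nothing to freeze, and the argument is just Lemma~\ref{lem:sectional-coeff} applied to $T$ itself. The other is the real scalar field, which is handled by the complexification remark already made for the lemmas. The only substantive step is choosing the section that carries a proportional share of the support, and the pigeonhole step does this. I expect no real obstacle beyond verifying that the hypothesis $\sigma_I<1$ keeps the exponent $r$ finite; that is exactly the defining condition of $\mathcal I(\mathbf p)$.
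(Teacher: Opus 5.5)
Your proposal is correct and follows the paper's proof essentially step for step. You pigeonhole over the $m-|I|$ coordinates outside $I$ to find a section with at least $|\Lambda|/n^{m-|I|}$ points, freeze those coordinates at basis vectors, and apply Lemma~\ref{lem:sectional-coeff} with $1/r=1-\sigma_I$, falling back on $\|T\|\ge1$ when $d-m+|I|\le0$; your remarks on $I=[m]$ and on real scalars only make explicit what the paper leaves implicit.
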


\begin{proof}
Fix $I\in\mathcal I(\mathbf p)$ and put $k=|I|$.  By averaging over the coordinates outside $I$, there is a choice of those $m-k$ coordinates for which the resulting $k$-dimensional section of $\Lambda$ has at least
\[
 \frac{|\Lambda|}{n^{m-k}}\gtrsim n^{d-m+k}
\]
points.  Fixing the complementary variables at the corresponding basis vectors produces a $k$-linear form $A$ on $\prod_{j\in I}\ell_{p_j}^n$ with $\|A\|\le\|T\|$ and all active coefficients of modulus one.  Since $1/2\le\sigma_I<1$, Lemma~\ref{lem:sectional-coeff} with $1/r=1-\sigma_I$ gives
\[
 n^{(d-m+k)/r}\lesssim\|A\|\le\|T\|
\]
when $d-m+k>0$.  If $d-m+k\le0$, the basis-vector estimate $\|T\|\ge1$ is stronger.  This proves \eqref{eq:section-lower}.
\end{proof}

\begin{proposition}\label{prop:high-lower}
Assume $\sigma\le1/2$.  Then
\begin{equation}\label{eq:high-lower}
 \|T\|\gtrsim_{m,\mathbf p}
 n^{d(m+1-2\sigma)/(2m)}.
\end{equation}
\end{proposition}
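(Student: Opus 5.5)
Apply the anisotropic Hardy--Littlewood inequality of Lemma~\ref{lem:HL-low-sum} directly to $T$ on $\ell_{p_1}^n\times\cdots\times\ell_{p_m}^n$. Since $\sigma\le1/2$, the exponent $q$ defined by \eqref{eq:q-HL-aniso} satisfies $1/q=(m+1-2\sigma)/(2m)$. The lemma then gives
\[
 \Bigl(\sum_{\mathbf i\in[n]^m}|a_{\mathbf i}|^q\Bigr)^{1/q}\le C_{m,\mathbf p}\|T\|.
\]

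Next, compute the left-hand side. The coefficients $a_{\mathbf i}=T(e_{i_1},\ldots,e_{i_m})$ equal $\eps_{\mathbf i}$ for $\mathbf i\in\Lambda$ and vanish otherwise. Because every active coefficient has modulus one, the left-hand side is exactly $|\Lambda|^{1/q}$. Using $|\Lambda|\ge\lfloor n^d\rfloor\ge n^d/2$ for $n\ge2$ (recall $d\ge1$), we get
\[
 \|T\|\ge C_{m,\mathbf p}^{-1}2^{-1/q}\,n^{d/q}
 = c\,n^{d(m+1-2\sigma)/(2m)},
\]
which is \eqref{eq:high-lower}.

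There is no genuine obstacle once Lemma~\ref{lem:HL-low-sum} is available. The only points to check are the following.
\begin{enumerate}[label=\textup{(\alph*)}]
\item The case $\sigma=1/2$ is covered. There $q=2$, and the lemma reduces to Lemma~\ref{lem:l2-critical}, consistent with Proposition~\ref{prop:global-lower}.
\item The real case follows from the complex one by complexification. This changes only the constant, as remarked at the start of the section.
\item The constant is independent of $n$ and $\Lambda$, so the bound holds uniformly for all admissible supports and unimodular coefficient choices.
\end{enumerate}
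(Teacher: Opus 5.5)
Your proposal is correct and is essentially the paper's own proof: apply Lemma~\ref{lem:HL-low-sum}, observe that unimodularity of the active coefficients makes the coefficient $\ell_q$ norm equal to $|\Lambda|^{1/q}$, and use $|\Lambda|\ge\lfloor n^d\rfloor\gtrsim n^d$. Your side remarks, namely that $q=2$ at $\sigma=1/2$, the complexification for $\K=\R$, and the uniformity in $\Lambda$ and $\eps$, are accurate and only make explicit what the paper leaves implicit.
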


\begin{proof}
Let $q$ be given by \eqref{eq:q-HL-aniso}.  Since all active coefficients have modulus one,
\[
 |\Lambda|^{1/q}
 =\left(\sum_{\mathbf i\in[n]^m}|a_{\mathbf i}|^q\right)^{1/q}.
\]
Lemma~\ref{lem:HL-low-sum} and $|\Lambda|\gtrsim n^d$ give \eqref{eq:high-lower}.
\end{proof}

\subsection{Sparse supports and simultaneous signs}
Let $\Lambda\subset[n]^m$. For subsets $A_j\subset[n]$, the
restriction of $T_{\Lambda,\varepsilon}$ to
\[
\ell_{p_1}(A_1)\times\cdots\times\ell_{p_m}(A_m)
\]
has coefficient support
\[
\Lambda\cap(A_1\times\cdots\times A_m).
\]
The dimensions of the coordinate projections of this trace enter
the norm estimate for the restricted multilinear form. A dyadic
decomposition of vectors in the unit balls of the $\ell_{p_j}^n$
spaces then combines the estimates for rectangular restrictions
into an estimate for the full operator norm.

Simultaneous control of rectangular restrictions occurs in the
finite-block construction of
\cite[Sections~3--5]{BarbosaPellegrinoRaposoTeixeira2026}.
The local condition below incorporates the exponents $p_j$
individually and is adapted to the norm on
\[
\mathcal L(\ell_{p_1}^n,\ldots,\ell_{p_m}^n;\mathbb K).
\]
Assume $\sigma\ge1/2$.  For a nonempty finite set $F\subset[n]^m$ define
\[
 \pi_j(F):=\{i_j:(i_1,\ldots,i_m)\in F\},
\]
and
\begin{equation}\label{eq:local-parameters}
 M(F):=|F|,
 \qquad
 v_j(F):=|\pi_j(F)|.
\end{equation}
Put
\begin{equation}\label{eq:H-def}
 H_n(F):=1+\sum_{j=1}^m v_j(F)\log\frac{en}{v_j(F)}.
\end{equation}

\begin{definition}\label{def:lambda-sparse}
Let $\mathbf p\in[2,\infty]^m$ and $\lambda>0$.  A set $\Lambda\subset[n]^m$ is called \emph{$(\mathbf p,\lambda)$-sparse} if
\begin{equation}\label{eq:lambda-sparse}
 M(F)H_n(F)
 \le n^\lambda\prod_{j=1}^m v_j(F)^{2/p_j}
\end{equation}
for every nonempty $F\subset\Lambda$.
\end{definition}

Assume from now on that $\sigma\ge1/2$ and set
\begin{equation}\label{eq:L-aniso}
 L_{m,d}(\mathbf p):=
 \max\left\{
 0,\ d+1-2\sigma,\
 \max_{I\in\mathcal I(\mathbf p)}
 2\pos{d-m+|I|}(1-\sigma_I)
 \right\}.
\end{equation}
Thus $L_{m,d}(\mathbf p)=2\alpha_{m,d}(\mathbf p)$ in this range.

\begin{lemma}\label{lem:weighted-obstruction}
Let $\theta_1,\ldots,\theta_m\in[0,1]$, $\Theta:=\sum_j\theta_j\ge1$, and $d\in[1,m)$.  For $I\subset[m]$ put
\[
 \Theta_I:=\sum_{j\in I}\theta_j.
\]
Set
\[
 L:=\max\left\{
 0,\ d+1-\Theta,\
 \max_{\substack{\varnothing\ne I\subset[m]\\1\le\Theta_I<2}}
 \pos{d-m+|I|}(2-\Theta_I)
 \right\}.
\]
Fix a nonempty $J\subset[m]$ with $\Theta_J<2$, put $C=d-m+|J|$, and let $a\in(0,1]$ and $x_j\in[0,1]$ for $j\notin J$.  If
\begin{equation}\label{eq:weighted-psi-nonnegative}
 C+a\left(\sum_{j\notin J}x_j-1\right)\ge0,
\end{equation}
then
\begin{equation}\label{eq:weighted-objective}
 a\left(2-\Theta_J-\sum_{j\notin J}\theta_jx_j\right)\le L.
\end{equation}
\end{lemma}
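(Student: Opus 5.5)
The plan is to reduce \eqref{eq:weighted-objective} to a finite check at the vertices of a polytope. Substitute $y_j:=a x_j$ for $j\notin J$. Hypothesis \eqref{eq:weighted-psi-nonnegative} becomes
\[
 C+\sum_{j\notin J}y_j-a\ge0,
\]
and the left side of \eqref{eq:weighted-objective} becomes $(2-\Theta_J)a-\sum_{j\notin J}\theta_jy_j$. Both are linear in $(a,y)$. The feasible set
\[
 \Bigl\{0\le a\le1,\ 0\le y_j\le a,\ \textstyle\sum_j y_j\ge a-C\Bigr\}
\]
is a polytope. The value at $a=0$ is $0\le L$, so including the endpoint $a=0$ is harmless. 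The maximum of the linear objective is therefore attained at a vertex. At a vertex, all but at most one of the constraints $y_j\in\{0,a\}$ and $a\in\{0,1\}$ are active, together with possibly the tight constraint $\sum y_j=a-C$.

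\textbf{Integral vertices.} First treat vertices with $a>0$ and every $y_j\in\{0,a\}$, so $x=\mathbf 1_K$ for some $K\subset[m]\setminus J$. Put $I:=J\cup K$. The objective is $a(2-\Theta_I)$, and the constraint reads $d-m+|I|\ge 1-1/a+\ldots$; for $a=1$ it is simply $d-m+|I|\ge1$. I will split by the size of $\Theta_I$.
\begin{itemize}
\item If $\Theta_I\ge2$, the value is $\le0\le L$.
\item If $1\le\Theta_I<2$, then $2-\Theta_I\le(d-m+|I|)_+(2-\Theta_I)\le L$.
\item If $\Theta_I<1$, use the global term. Since $\theta_j\le1$ and $d<m$, we have $\Theta-\Theta_I\le m-|I|\le d-1$. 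This gives $2-\Theta_I\le d+1-\Theta\le L$.
\end{itemize}
When $a<1$ at such a vertex, the constraint must be tight, so $a(1-|K|)=C$. In that case I compare $a(2-\Theta_I)$ with $(d-m+|I|)(2-\Theta_I)$ directly, noting $d-m+|I|=C+|K|\ge a$ when $|K|\ge1$. The subcase $|K|=0$ forces $C\ge0$ with $a=C$ if $C<1$. Then the value is $C(2-\Theta_J)$, which is exactly a sectional obstruction term if $\Theta_J\ge1$, and is handled by the global term otherwise.

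\textbf{Fractional vertices (main obstacle).} The remaining vertices have $a=1$ and exactly one fractional coordinate $x_{j_0}=t\in(0,1)$, with the constraint tight. Let $I:=J\cup K$ and $I':=I\cup\{j_0\}$. Tightness gives
\[
 d-m+|I|=1-t, \qquad d-m+|I'|=2-t.
\]
The value is
\[
 2-\Theta_I-t\theta_{j_0}=(1-t)(2-\Theta_I)+t(2-\Theta_{I'}).
\]
This is a convex combination, but the weights $1-t$ and $t$ do not match the natural multipliers $(1-t)$ and $(2-t)$ of the sectional terms. I will bound it by splitting according to the positions of $\Theta_I$ and $\Theta_{I'}$ relative to $1$ and $2$:
\begin{itemize}
\item If $\Theta_{I'}\ge2$, drop the second term and use $\Theta_I\ge1$, or else the global term.
\item If $1\le\Theta_{I'}<2$, the bound $t(2-\Theta_{I'})\le(2-t)(2-\Theta_{I'})-(1-t)(2-\Theta_I)+\ldots$ should follow from $\theta_{j_0}\le1$.
\item If $\Theta_I<1$, use $d+1-\Theta$ together with $\Theta_{[m]\setminus I'}\le m-|I'|=d-2+t$.
\end{itemize}
Getting these elementary inequalities right in every case, using only $\theta_j\in[0,1]$, $d<m$ and the tightness identity, is where I expect the real work to lie.
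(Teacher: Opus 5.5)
Your substitution $y_j=ax_j$ is sound and is a different route from the paper's. It makes both the constraint and the objective linear in $(a,y)$, so the lemma becomes a linear program on a polytope. Your list of vertex types is complete: $a=0$; $a=1$ with all $y_j\in\{0,a\}$; $a<1$ with all $y_j\in\{0,a\}$ and the constraint tight; $a=1$ with one fractional $y_{j_0}$ and the constraint tight. The paper organizes things differently. With $R=\sum_{j\notin J}x_j$, it first disposes of every point with $C+R\ge1$ by the global term, using $|K|-R\le d-1$. Only in the residual case $R<1$, $0<C<1-R$ does it use $a\le C/(1-R)$ and maximize a linear-fractional function over the simplex $\{x\ge0,\ \sum x_j\le 1-C\}$, whose vertices are $0$ and $(1-C)e_k$.

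\textbf{The gap: the fractional vertices.} You leave this case open, and the split you sketch does not close it. Your second bullet, read as bounding the value by the sectional term of $I'$, amounts to
\[
 (2-t)(2-\Theta_{I'})-\bigl(2-\Theta_I-t\theta_{j_0}\bigr)=(1-t)\bigl(2-\Theta_I-2\theta_{j_0}\bigr)\ge0 .
\]
This fails as soon as $\Theta_I+2\theta_{j_0}>2$, and $\theta_{j_0}\le1$ does not help.

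For instance, take $m=2$, $d=3/2$, $\theta=(1,3/4)$, $J=\{1\}$. The point $a=1$, $x_2=1/2$ is a fractional vertex with $I=\{1\}$, $I'=\{1,2\}$, $t=1/2$, and its value is $5/8$. The sectional terms are only $1/2$ for $\{1\}$ and $3/8$ for $\{1,2\}$. Since $\Theta_I=1$ and $\Theta_{I'}=7/4$, this point falls under your second bullet, where the bound fails, and under neither of the other two.

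\textbf{The fix.} Use the computation of your third bullet without the restriction $\Theta_I<1$. From $\Theta_{[m]\setminus I'}\le m-|I'|=d-2+t$ one gets
\[
 (d+1-\Theta)-\bigl(2-\Theta_I-t\theta_{j_0}\bigr)\ge(1-t)(1-\theta_{j_0})\ge0
\]
at every fractional vertex. In the example, $d+1-\Theta=3/4\ge 5/8$.

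The same device fills a smaller hole in your integral vertices with $a<1$ and $|K|\ge1$. When $\Theta_I<1$, the comparison quantity $(d-m+|I|)(2-\Theta_I)$ is not a term of $L$. There, tightness gives $m-|I|=d-a-|K|(1-a)$, whence
\[
 (d+1-\Theta)-a(2-\Theta_I)\ge(1-a)\bigl(1+|K|-\Theta_I\bigr),
\]
which is positive whenever the value is.

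In fact every vertex other than $(a,y)=(C,0)$ is dominated by $d+1-\Theta$ alone. The sectional terms are needed only at that single vertex, with value $C(2-\Theta_J)$, exactly as at the vertex $x=0$ in the paper's argument. Once you see this, your LP route is as short as the paper's.
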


\begin{proof}
Write $K=[m]\setminus J$, $R=\sum_{j\in K}x_j$, and $A=2-\Theta_J$.  If either $R\ge1$ or $C\ge1-R$, then \eqref{eq:weighted-psi-nonnegative} implies $C+R-1\ge0$, hence
\[
 |K|-R\le d-1.
\]
Since $\theta_j\le1$,
\[
 A-\sum_{j\in K}\theta_jx_j
 =2-\Theta+\sum_{j\in K}\theta_j(1-x_j)
 \le d+1-\Theta\le L,
\]
and \eqref{eq:weighted-objective} follows from $a\le1$.

It remains to consider $R<1$ and $0<C<1-R$.  Then \eqref{eq:weighted-psi-nonnegative} gives
\[
 a\le\frac{C}{1-R}.
\]
If $A-\sum_{j\in K}\theta_jx_j\le0$, then \eqref{eq:weighted-objective} is immediate.  Hence we may assume that this quantity is positive.
Let
\[
 \Delta:=\left\{x=(x_j)_{j\in K}:x_j\ge0,\ \sum_{j\in K}x_j\le1-C\right\}.
\]
Its vertices are $v_0=0$ and $v_k=(1-C)e_k$, $k\in K$.  Set
\[
 N(x):=C\left(A-\sum_{j\in K}\theta_jx_j\right),
 \qquad D(x):=1-\sum_{j\in K}x_j.
\]
On $\Delta$ one has $D(x)\ge C>0$.  If $x=\sum_i\lambda_i v_i$ is a convex decomposition into the vertices of $\Delta$, then $N$ and $D$ are affine and
\[
 \frac{N(x)}{D(x)}
 =\sum_i
 \frac{\lambda_iD(v_i)}{D(x)}\,
 \frac{N(v_i)}{D(v_i)}.
\]
The coefficients $\lambda_iD(v_i)/D(x)$ are nonnegative and sum to $1$.  Hence $N(x)/D(x)$ is bounded by the largest of its vertex values.
At $x=0$ the value is $CA$.  If $\Theta_J\ge1$, this is one of the sectional quantities in the definition of $L$; if $\Theta_J<1$, then
\[
 (d+1-\Theta)-CA
 =(1-C)(A-1)+\sum_{j\in K}(1-\theta_j)\ge0.
\]
At the remaining vertices, $x=(1-C)e_k$ for $k\in K$, the value is $A-(1-C)\theta_k$, and
\[
 d+1-\Theta-\bigl(A-(1-C)\theta_k\bigr)
 =C(1-\theta_k)+\sum_{j\in K\setminus\{k\}}(1-\theta_j)\ge0.
\]
Thus every vertex value is at most $L$, proving \eqref{eq:weighted-objective}.
\end{proof}

\begin{proposition}\label{prop:quant-sparse}
Let $m\ge2$, $1\le d<m$, $\mathbf p\in[2,\infty]^m$, and $\sigma\ge1/2$.  If
\begin{equation}\label{eq:lambda-above}
 \lambda>L_{m,d}(\mathbf p),
\end{equation}
then there are constants $c>0$ and $n_0\in\N$ such that for every $n\ge n_0$ there exists a $(\mathbf p,\lambda)$-sparse set $\Lambda_n\subset[n]^m$ satisfying
\begin{equation}\label{eq:support-size-quant}
 n^d\le|\Lambda_n|\le3n^d.
\end{equation}
\end{proposition}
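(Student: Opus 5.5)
We construct $\Lambda_n$ by the probabilistic method followed by deletion. Fix $\lambda>L_{m,d}(\mathbf p)$ and choose each $\mathbf i\in[n]^m$ independently with probability $\pi:=2n^{d-m}$. Let $\Lambda'$ be the resulting random set. Chernoff gives $|\Lambda'|\in[\tfrac32n^d,\tfrac52n^d]$ with probability $1-o(1)$. The aim is to show that the expected number of ``bad'' configurations $F\subset\Lambda'$ violating \eqref{eq:lambda-sparse} is small, namely at most $\tfrac12 n^d$. Removing one point from each minimal bad configuration then leaves a set $\Lambda_n$ with $n^d\le|\Lambda_n|\le3n^d$ which contains no bad $F$. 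It suffices to handle minimal violators, since any violating $F$ contains a minimal one, and removing a point of it destroys it.

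Rather than count all subsets $F$, we organize the union bound through projections. Fix sizes $v=(v_1,\dots,v_m)$ and projection sets $B_j\subset[n]$ with $|B_j|=v_j$. The number of choices of the $B_j$ is at most $\prod_j\binom{n}{v_j}\le \exp\bigl(\sum_j v_j\log(en/v_j)\bigr)\le e^{H_n}$. This is exactly why $H_n(F)$ appears in the definition. Given the box $B=\prod B_j$, we have $|\Lambda'\cap B|\sim\mathrm{Bin}\bigl(\prod v_j,\pi\bigr)$. A violation forces $|\Lambda'\cap B|\ge M$ with
\[
 M>n^{\lambda}\prod_j v_j^{2/p_j}/H.
\]
Write $v_j=n^{x_j}$. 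The mean is $\mu=n^{d-m+\sum x_j}$ up to constants. We plan to use the tail bound
\[
 \Pr(\mathrm{Bin}\ge M)\le (e\mu/M)^M,
\]
and to show that for $M$ above this threshold one gets $(e\mu/M)^M\le e^{-2H}n^{-10m}$. This survives the union over boxes and over the $O((\log n)^m)$ dyadic size classes. It requires $M\ge n^{\eta}\mu$ and $M\log(M/\mu)\gtrsim H\approx\sum v_j\log n$, i.e.\ $M\gtrsim n^{\eta}\max(\mu,\max_j v_j)$. So the core is an exponent inequality: for all $x\in[0,1]^m$,
\[
 \lambda+\sum_j\theta_jx_j>\max\Bigl\{\,d-m+\textstyle\sum_j x_j,\ \max_j x_j\Bigr\},\qquad \theta_j=2/p_j\in[0,1].
\]
Here $\Theta=2\sigma\ge1$. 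This has to hold only for configurations that can actually occur, which is where the refinement enters.

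The naive inequality fails, because it ignores that the box can be split: most of the mass may sit in a sub-box on coordinates $J$. This is the role of \cref{lem:weighted-obstruction}. We parametrize the worst case by letting $J$ be the coordinates where $F$ is ``full'' (projections of size $\approx M$ relative to the fibers) and letting the coordinates outside $J$ have $v_j=n^{ax_j}$ with scale $a\in(0,1]$. The condition \eqref{eq:weighted-psi-nonnegative} is then exactly the requirement that the expected count $\mu$ in such a box is at least $1$. When $\mu<1$, by contrast, the tail $\Pr(\mathrm{Bin}\ge M)$ decays like $\mu^M$, and the polynomial count of boxes is beaten directly. The objective \eqref{eq:weighted-objective} bounds the needed exponent of $n$ for the ratio
\[
 M H\Big/\prod v_j^{2/p_j}
\]
by $L=L_{m,d}(\mathbf p)$, after doubling exponents ($\Theta_I=2\sigma_I$, so $1\le\Theta_I<2$ matches $I\in\mathcal I(\mathbf p)$). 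Since $\lambda>L$, a margin $n^{\lambda-L}$ remains, and this absorbs the logarithms and the union bound.

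We expect the main obstacle to be this reduction: showing that every potential minimal violator $F$ falls into one of the regimes covered by \cref{lem:weighted-obstruction}, with $a,x_j,J$ read off from the projection sizes, and that the large-deviation bound simultaneously beats the entropy $e^{H_n(F)}$. We would first try a case split on whether $\mu\ge1$: apply the lemma when $\mu\ge1$, and use the direct $\mu^M$ bound when $\mu<1$. This is accompanied by dyadic pigeonholing of $F$ so that fibers are roughly regular. Finally, deletion removes at most $\sum_F\mathbb E[\#\text{bad}]=o(n^d)$ points, yielding \eqref{eq:support-size-quant} for $n\ge n_0$.
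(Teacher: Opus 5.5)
Your framework matches the paper's. You include each point independently with probability $2n^{d-m}$, bound the first moment over projection sets by $\prod_j\binom{n}{v_j}$ times the tail $(e\mu/M)^M$, and use \cref{lem:weighted-obstruction} as the key input. Deletion is not needed: the paper gets $\mu_n(B_n)=o(1)$ directly, and your box-level union bound controls the probability that a violator exists, not how many there are. The gap is that the step which actually constitutes the proof, namely that this first moment is $n^{-cM}$ uniformly over all violating $(M,\mathbf v)$, is left as ``the main obstacle'', and the route you sketch for it is wrong.

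First, \eqref{eq:weighted-psi-nonnegative} is not the condition $\mu\ge1$. Its left side $\Psi_J=d-m+|J|+a(\sum_{j\notin J}x_j-1)$ is the paper's bound for $\log(\text{first moment})/(M\log n)$ with the projection entropy included. There each $j\in J$ contributes $b_j+(v_j/M)(1-b_j)\le1$, using $v_j\le M$. In $\log_n\mu=d-m+\sum_jb_j$ the same coordinate contributes only $b_j\approx a$.

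Second, for $\mu<1$ the number of boxes is $\exp(\sum_jv_j\log(en/v_j))$, which is not polynomial, and $\mu^M$ need not beat it. Take $m=3$, $d=2$, and let $F$ be a matching of size $V=n^a$, $a<1/2$, inside a slice $\{i_3=\mathrm{const}\}$. Then $\mu\asymp n^{2a-1}<1$, yet the expected number of such $F$ is $n^{(1-a+o(1))V}$, and they are genuinely present (each slice is a random bipartite graph of average degree $2$). They are harmless only because they are not violators. Proving that is exactly what \cref{lem:weighted-obstruction} does here, with $J=\{1,2\}$ and $\Psi_J=1-a>0$. So your $\mu\ge1$ versus $\mu<1$ dichotomy assigns the lemma to the wrong case.

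What is missing is the split according to $M$ versus $V=\max_jv_j$; recall that $M\ge V$ always.
\begin{itemize}
\item If $M\ge Vn^\tau$, the entropy is $O(V\log n)=o(M\log n)$. Violation gives $M/\mu\ge\frac12n^{\lambda-\eta+\Theta-d-1}$, a positive power since $\lambda>d+1-\Theta$. This is the only regime your requirement $M\gtrsim n^\eta\max(\mu,V)$ covers.
\item If $V\le M<Vn^\tau$, the entropy is of order $M\log n$ and must be tracked exactly. Write $V=n^a$, $v_j=n^{b_j}$, $J=\{j:b_j\ge a-\tau\}$, and $x_j=b_j/a$ for $j\notin J$. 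The normalized first-moment exponent is then at most $\Psi_J(a,x)+mn^{-\tau}+O(1/\log n)$. Violation gives $\lambda\le\Phi_J(a,x)+(m+1)\tau+\eta$, where $\Phi_J$ is the left side of \eqref{eq:weighted-objective}; this forces $\Theta_J<2$ and $a\ge\lambda/4$. The contrapositive of \cref{lem:weighted-obstruction}, combined with compactness of $[a_0,1]\times[0,1]^{m-|J|}$, then gives a uniform gap $\Psi_J\le-\delta$.
\end{itemize}
Without this two-regime analysis, or an equivalent one, the proposal does not establish the proposition. The dyadic pigeonholing of fibers you mention plays no role.
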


\begin{proof}
Put $\theta_j:=2/p_j\in[0,1]$ and $\Theta=\sum_j\theta_j=2\sigma\ge1$, and set
\[
 \varepsilon_0:=\frac{\lambda-L_{m,d}(\mathbf p)}{10}>0.
\]
All perturbations below are chosen smaller than $\varepsilon_0$.

For each sufficiently large $n$, let
\[
 \Omega_n:=\{0,1\}^{[n]^m}
\]
with the product probability measure $\mu_n$ for which each coordinate has mass $q:=2n^{d-m}$ at $1$. Increasing $n_0$ if necessary, we may and shall assume that
$q=2n^{d-m}\le1$. For $\omega\in\Omega_n$ write
\[
 \Lambda(\omega):=\{\mathbf i\in[n]^m:\omega_{\mathbf i}=1\}.
\]
Thus $\int_{\Omega_n}|\Lambda(\omega)|\,d\mu_n(\omega)=2n^d$.  We shall exhibit a measurable set of positive $\mu_n$-measure on which the support has the required cardinality and sparsity.

Let $B_n$ be the set of $\omega$ for which $\Lambda(\omega)$ is not $(\mathbf p,\lambda)$-sparse.  For integers $M,v_1,\ldots,v_m$, let $B_n(M,\mathbf v)$ be the part of $B_n$ witnessed by a nonempty set $F\subset\Lambda(\omega)$ with
\[
 M=|F|,\qquad v_j=|\pi_j(F)|,
 \qquad P=\prod_{j=1}^m v_j,\qquad V=\max_jv_j.
\]
The union of these measurable sets is $B_n$.  Counting the possible projections and then the possible $M$-point subsets of their Cartesian product gives
\begin{equation}\label{eq:first-moment-master-new}
 \mu_n\bigl(B_n(M,\mathbf v)\bigr)
 \le
 \exp\left(\sum_{j=1}^m v_j\log\frac{en}{v_j}\right)
 \left(\frac{eqP}{M}\right)^M.
\end{equation}
Indeed, this is simply the integral over $\Omega_n$ of the number of witnesses in the class.  Also $H_n(F)\le C_mV\log n$.

\medskip
\noindent\emph{Regime I: $M\ge Vn^\tau$.}
If $F$ violates \eqref{eq:lambda-sparse}, then, after absorbing $\log n$ into $n^\eta$,
\[
 M\ge n^{\lambda-\eta}\frac{\prod_jv_j^{\theta_j}}{V}.
\]
Since $\theta_j-1\le0$ and $v_j\le V\le n$,
\[
 \frac{M}{qP}
 \ge \frac12 n^{\lambda-\eta+m-d}
 \frac{\prod_jv_j^{\theta_j-1}}{V}
 \ge \frac12 n^{\lambda-\eta+\Theta-d-1}.
\]
For the last inequality, $v_j^{\theta_j-1}\ge n^{\theta_j-1}$ and $V^{-1}\ge n^{-1}$.  Moreover,
\[
 L_{m,d}(\mathbf p)\ge d+1-\Theta,
\]
so \eqref{eq:lambda-above} gives $\lambda+\Theta-d-1>0$.  We may therefore choose the logarithmic loss $\eta>0$ so small that
\[
 \kappa:=\lambda-\eta+\Theta-d-1>0.
\]
Then $M/(qP)\ge\frac12 n^\kappa$, and for all sufficiently large $n$,
\[
 \left(\frac{eqP}{M}\right)^M
 \le \exp\!\left(-\frac{\kappa}{2}M\log n\right).
\]
The projection-counting exponent also has a uniform estimate:
\[
 \sum_{j=1}^m v_j\log\frac{en}{v_j}
 \le C_mV\log n
 \le C_m n^{-\tau}M\log n,
\]
because $M\ge Vn^\tau$.  Thus this contribution is $o(M\log n)$ uniformly in Regime~I.  Consequently
\begin{equation}\label{eq:measure-regimeI}
 \mu_n\bigl(B_n(M,\mathbf v)\bigr)\le n^{-cM}
\end{equation}
throughout Regime~I.

\medskip
\noindent\emph{Regime II: $V\le M<Vn^\tau$.}
Write $V=n^a$, so $a\in(0,1]$, with $\tau,\eta>0$ to be chosen below, and put
\[
 b_j:=\frac{\log v_j}{\log n}\in[0,a].
\]
If $F$ violates \eqref{eq:lambda-sparse}, then
\[
 n^\lambda\prod_{j=1}^m v_j^{\theta_j}<M H_n(F).
\]
Since $H_n(F)\le C_mV\log n$ and $M<Vn^\tau$, we obtain
\[
 n^\lambda\prod_{j=1}^m v_j^{\theta_j}
 < C_mV^2n^\tau\log n.
\]
Choose $n$ so large that $\log_n(C_m\log n)\le\eta$.  Taking logarithms to base $n$ gives
\begin{equation}\label{eq:weighted-violation-regimeII}
 \lambda+\sum_{j=1}^m\theta_j b_j
 \le2a+\tau+\eta.
\end{equation}
In particular,
\[
 a\ge\frac{\lambda-\tau-\eta}{2}.
\]
Let
\[
 J:=\{j:b_j\ge a-\tau\},\qquad r:=|J|.
\]
The set $J$ is nonempty because $V=\max_jv_j$, so $b_j=a$ for at least one $j$.  Writing
\[
 \Theta_J:=\sum_{j\in J}\theta_j,
\]
and using $b_j\ge a-\tau$ on $J$, \eqref{eq:weighted-violation-regimeII} yields
\begin{equation}\label{eq:regimeII-J-rough}
 \lambda\le a(2-\Theta_J)+(\Theta_J+1)\tau+\eta.
\end{equation}
After $\tau,\eta$ are chosen so that $(m+1)\tau+\eta<\lambda$, \eqref{eq:regimeII-J-rough} forces $\Theta_J<2$.

For $j\notin J$ put $x_j:=b_j/a\in[0,1]$.  We next make the first-moment exponent completely explicit.  Set
\[
 t:=\frac{\log M}{\log n};
 \qquad a\le t<a+\tau.
\]
From \eqref{eq:first-moment-master-new} and $q=2n^{d-m}$,
\begin{align*}
 \frac{1}{M\log n}\log\mu_n\bigl(B_n(M,\mathbf v)\bigr)
 &\le
 \sum_{j=1}^m\frac{v_j}{M}
 \frac{\log(en/v_j)}{\log n}
 +\frac{\log(eqP/M)}{\log n}\\
 &=d-m+\sum_{j=1}^m b_j-t
   +\sum_{j=1}^m\frac{v_j}{M}(1-b_j)
   +E_n,
\end{align*}
where
\[
 0\le E_n
 \le\frac{m+\log(2e)}{\log n}.
\]
Indeed, $\log(en/v_j)=1+(1-b_j)\log n$, $\sum_jv_j/M\le m$, and the constant $2e$ comes from $eq=2e\,n^{d-m}$.

For $j\in J$, the inequalities $v_j\le M$ and $0\le b_j\le1$ give
\[
 b_j+\frac{v_j}{M}(1-b_j)\le1.
\]
For $j\notin J$, since $b_j<a-\tau$ and $t\ge a$,
\[
 \frac{v_j}{M}=n^{b_j-t}\le n^{-\tau}.
\]
Using also $t\ge a$, we obtain the uniform estimate
\begin{equation}\label{eq:entropy-regimeII-aniso}
 \frac{1}{M\log n}\log\mu_n\bigl(B_n(M,\mathbf v)\bigr)
 \le d-m+r+a\left(\sum_{j\notin J}x_j-1\right)
 +m n^{-\tau}+\frac{m+\log(2e)}{\log n}.
\end{equation}

It remains to obtain a uniform negative gap in the first term on the right.  Put
\[
 a_0:=\min\left\{\frac\lambda4,1\right\},
\]
and, for each nonempty $J\subset[m]$ with $\Theta_J<2$, define on the compact set
\[
 K_J:=[a_0,1]\times[0,1]^{m-r}
\]
the continuous functions
\[
 \Phi_J(a,x):=a\left(2-\Theta_J-\sum_{j\notin J}\theta_jx_j\right),
 \qquad
 \Psi_J(a,x):=d-m+r+a\left(\sum_{j\notin J}x_j-1\right).
\]
Let
\[
 S_J:=\{(a,x)\in K_J:\Phi_J(a,x)\ge L_{m,d}(\mathbf p)+3\varepsilon_0\}.
\]
If $S_J$ is nonempty, then $\Psi_J<0$ everywhere on $S_J$.  Otherwise a point of $S_J$ with $\Psi_J\ge0$ would satisfy the hypothesis of Lemma~\ref{lem:weighted-obstruction}, which would imply $\Phi_J\le L_{m,d}(\mathbf p)$, a contradiction.  Compactness therefore gives
\[
 \max_{S_J}\Psi_J<0.
\]
There are only finitely many possible $J$.  Omitting the empty sets $S_J$, we can choose $\delta>0$, depending only on $m,d,\mathbf p,\lambda$, such that
\begin{equation}\label{eq:weighted-stability}
 \Phi_J(a,x)\ge L_{m,d}(\mathbf p)+3\varepsilon_0
 \quad\Longrightarrow\quad
 \Psi_J(a,x)\le-\delta
\end{equation}
for every admissible $J$ and every $(a,x)\in K_J$.

Now choose $\tau,\eta>0$ so small that
\[
 (m+1)\tau+\eta<\varepsilon_0,
 \qquad
 \tau+\eta<\frac\lambda2.
\]
Then every violating witness in Regime~II satisfies
\[
 a\ge\frac{\lambda-\tau-\eta}{2}\ge\frac\lambda4.
\]
If $\lambda>4$, this is impossible because $a\le1$, so Regime~II is empty.  Otherwise $a\ge a_0$, and the compactness estimate above applies.  Moreover, from \eqref{eq:weighted-violation-regimeII}, the inequalities $b_j\ge a-\tau$ for $j\in J$, and $b_j=ax_j$ for $j\notin J$, we get
\begin{align*}
 \lambda
 &\le
 a\left(2-\Theta_J-\sum_{j\notin J}\theta_jx_j\right)
 +(\Theta_J+1)\tau+\eta\\
 &\le \Phi_J(a,x)+(m+1)\tau+\eta.
\end{align*}
Thus the defining inequality of a violating witness places $(a,x)$ inside the compact stability region: the error term in the preceding display is smaller than $\varepsilon_0$. Since $\lambda=L_{m,d}(\mathbf p)+10\varepsilon_0$, it follows that
\[
 \Phi_J(a,x)\ge L_{m,d}(\mathbf p)+9\varepsilon_0,
\]
so \eqref{eq:weighted-stability} gives $\Psi_J(a,x)\le-\delta$.  Finally, with the now fixed $\tau>0$, choose $n$ sufficiently large that
\[
 m n^{-\tau}+\frac{m+\log(2e)}{\log n}\le\frac\delta2.
\]
Substitution into \eqref{eq:entropy-regimeII-aniso} yields, uniformly over Regime~II,
\[
 \frac{1}{M\log n}\log\mu_n\bigl(B_n(M,\mathbf v)\bigr)
 \le-\frac\delta2.
\]
Thus
\begin{equation}\label{eq:measure-regimeII}
 \mu_n\bigl(B_n(M,\mathbf v)\bigr)\le n^{-cM}.
\end{equation}

There are only polynomially many parameter vectors $(M,v_1,\ldots,v_m)$.  In Regime~I, $M\ge Vn^\tau\ge n^\tau$; in Regime~II, the lower bound on $a$ again makes $M$ a positive power of $n$.  The exceptional case $V=1$ has $M=1$ and cannot violate the sparsity condition for large $n$.  Summing \eqref{eq:measure-regimeI} and \eqref{eq:measure-regimeII} therefore gives
\[
 \mu_n(B_n)=o(1).
\]

Finally let
\[
 C_n:=\{\omega:n^d\le|\Lambda(\omega)|\le3n^d\}.
\]
The standard binomial tail estimate gives
\[
 \mu_n(C_n^c)\le2e^{-c'n^d}=o(1).
\]
Hence
\[
 \mu_n(C_n\setminus B_n)
 \ge1-\mu_n(B_n)-\mu_n(C_n^c)>0
\]
for all sufficiently large $n$.  Any $\omega$ in this positive-measure set provides the required deterministic support $\Lambda_n=\Lambda(\omega)$.
\end{proof}

In the first-moment estimate, Regime I gives the global threshold $d+1-2\sigma$.  Regime II reduces every obstruction to that threshold or to a set $I$ with $1/2\le\sigma_I<1$, giving $2\pos{d-m+|I|}(1-\sigma_I)$.

For $A_1,\ldots,A_m\subset[n]$ and $\Lambda\subset[n]^m$, write
\[
 F_A:=\Lambda\cap(A_1\times\cdots\times A_m).
\]
If $F_A\ne\varnothing$, replacing $A_j$ by $\pi_j(F_A)$ does not change $F_A$.  We call this the effective representation of the trace.

A single assignment of signs gives a simultaneous norm estimate for
the restrictions to all rectangular coordinate subspaces; see
\cite[Proposition~4.1]{BarbosaPellegrinoRaposoTeixeira2026}.

\begin{proposition}\label{prop:rect-sign}
For every $m\ge2$ there exists $C_m>0$ such that for every $n$ and every $\Lambda\subset[n]^m$ one can choose signs $\eps_{\mathbf i}\in\{-1,1\}$, $\mathbf i\in\Lambda$, so that every nonempty rectangular trace $F$ satisfies
\begin{equation}\label{eq:rect-sign}
 \sup_{\substack{|z_i^{(j)}|\le1\\
 i\in\pi_j(F),\ 1\le j\le m}}
 \left|
 \sum_{\mathbf i\in F}\eps_{\mathbf i}
 z^{(1)}_{i_1}\cdots z^{(m)}_{i_m}
 \right|
 \le C_m\sqrt{M(F)H_n(F)}.
\end{equation}
\end{proposition}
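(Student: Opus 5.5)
We choose the signs at random and finish with a union bound. Let $(\eps_{\mathbf i})_{\mathbf i\in\Lambda}$ be independent Rademacher variables. For a fixed nonempty rectangular trace $F$, written in its effective representation $A_j=\pi_j(F)$ with $v_j=|A_j|$, the quantity on the left of \eqref{eq:rect-sign} is the supremum of the random multilinear polynomial
\[
 X_F(z):=\sum_{\mathbf i\in F}\eps_{\mathbf i}z^{(1)}_{i_1}\cdots z^{(m)}_{i_m}
\]
over the product of polydiscs $\prod_j\mathbb D^{A_j}$.

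\emph{Step 1: a net.} By multilinearity and the maximum principle in each variable separately, we may restrict each $z^{(j)}$ to the torus $\T^{A_j}$, and then to a $\delta$-net $\mathcal N_j\subset\T^{A_j}$ in the sup-metric with $\delta=1/(4m)$. Such a net has cardinality at most $(C/\delta)^{2v_j}=e^{O(v_j)}$. The standard telescoping argument changes one block at a time; each difference is again a form of the same type with one variable of sup-norm at most $\delta$. This gives
\[
 \sup_{\T}|X_F|\le 2\max_{\mathcal N_1\times\cdots\times\mathcal N_m}|X_F|.
\]
The net is indexed by $F$ only through the sets $A_j$.

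\emph{Step 2: tail bound at a fixed point.} For fixed $z$ in the net, $X_F(z)=\sum_{\mathbf i\in F}\eps_{\mathbf i}c_{\mathbf i}$ with $|c_{\mathbf i}|\le1$. Hoeffding's inequality, applied to the real and imaginary parts, gives
\[
 \Pr\bigl(|X_F(z)|>t\bigr)\le4\exp\bigl(-t^2/(4M(F))\bigr).
\]
Take $t=C\sqrt{M(F)H_n(F)}$. Since $H_n(F)\ge\sum_jv_j\log(en/v_j)\ge\sum_jv_j$, the net cardinality $e^{O(\sum v_j)}$ is absorbed, and one obtains
\[
 \Pr\bigl(\text{\eqref{eq:rect-sign} fails for }F\bigr)\le\exp\Bigl(-(C^2/4-O(1))H_n(F)\Bigr).
\]

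\emph{Step 3: union over traces.} This step carries the real content, and the main point is that $H_n$ was designed to pay for it. A nonempty trace is determined by its effective representation, that is, by the tuple $(A_1,\ldots,A_m)$. For fixed sizes $(v_1,\ldots,v_m)$ there are at most
\[
 \prod_j\binom{n}{v_j}\le\exp\Bigl(\sum_jv_j\log\frac{en}{v_j}\Bigr)\le e^{H_n(F)}
\]
such tuples, and there are at most $n^m$ choices of the sizes. Hence the total failure probability is at most
\[
 \sum_{\mathbf v}\exp\Bigl(-\bigl(C^2/4-O(1)-1\bigr)H_{\mathbf v}\Bigr).
\]

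The obstacle is the factor $n^m$ coming from the number of size vectors. It is handled by $H_{\mathbf v}\ge1+\sum_j v_j\log(en/v_j)\ge 1+\log(en)$, since each $v_j\ge1$ and $v\log(en/v)\ge\log(en)$ on $1\le v\le n$. Choosing $C=C_m$ large makes the sum at most $n^m e^{-(m+1)\log(en)}<1$. Therefore some sign choice satisfies \eqref{eq:rect-sign} simultaneously for all traces, and this choice is real, so it works over both scalar fields. This is the argument of \cite[Proposition~4.1]{BarbosaPellegrinoRaposoTeixeira2026}, and we would simply cite it after verifying that the normalization of $H_n$ matches.
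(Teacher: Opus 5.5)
Your proposal is correct and follows the paper's proof essentially step by step: random Rademacher signs, reduction to the torus, a $(4m)^{-1}$-net with telescoping giving $K_F\le 2\max_{\mathcal N}|X_F|$, Hoeffding at each net point with $t=C_m\sqrt{M(F)H_n(F)}$, the count $\prod_j\binom{n}{v_j}$ of effective rectangles, and absorption of the $n^m$ size vectors via $v\log(en/v)\ge\log(en)$. The only slip is cosmetic: because of the factor $2$ from the net, Hoeffding should be applied at level $t/2$, which turns $C^2/4$ into $C^2/16$ and is absorbed by enlarging $C_m$.
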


\begin{proof}
Let $\Omega_\Lambda:=\{-1,1\}^{\Lambda}$ with its normalized counting measure $\nu_\Lambda$.  Thus a point $\omega\in\Omega_\Lambda$ is a complete sign assignment $\eps_{\mathbf i}(\omega)$, and the coordinate functions are independent Rademacher functions.  We show that the set of assignments satisfying \eqref{eq:rect-sign} for every trace has positive $\nu_\Lambda$-measure.  Fix an effective trace
\[
 F=\Lambda\cap(B_1\times\cdots\times B_m),
 \qquad B_j=\pi_j(F),
\]
and write $v_j=|B_j|$ and $M=|F|$.  For fixed $z^{(j)}$ with $|z_i^{(j)}|\le1$, Hoeffding's inequality applied to the real and imaginary parts gives
\begin{equation}\label{eq:hoeffding-sign}
 \nu_\Lambda\left\{
 \left|\sum_{\mathbf i\in F}\eps_{\mathbf i}
 z^{(1)}_{i_1}\cdots z^{(m)}_{i_m}\right|>t
 \right\}
 \le4\exp\left(-c\frac{t^2}{M}\right).
\end{equation}
By the maximum modulus principle, the supremum in \eqref{eq:rect-sign} may be taken on
\[
 \T^{B_1}\times\cdots\times\T^{B_m}.
\]
Fix $\delta=(4m)^{-1}$ and choose a $\delta$-net $\mathcal N_j$ of $\T^{B_j}$ in the sup norm with
\[
 |\mathcal N_j|\le C_m^{v_j}.
\]
Let $K_F$ denote the supremum over
$\T^{B_1}\times\cdots\times\T^{B_m}$. For
\[
 z=(z^{(1)},\ldots,z^{(m)})
 \in\T^{B_1}\times\cdots\times\T^{B_m},
\]
choose $w^{(j)}\in\mathcal N_j$ such that
\[
 \|z^{(j)}-w^{(j)}\|_\infty\le\delta.
\]
Writing
\[
 S_F(z^{(1)},\ldots,z^{(m)})
 :=
 \sum_{\mathbf i\in F}\eps_{\mathbf i}
 z^{(1)}_{i_1}\cdots z^{(m)}_{i_m},
\]
multilinearity and telescoping one coordinate group at a time give
\[
 \left|
 S_F(z^{(1)},\ldots,z^{(m)})
 -
 S_F(w^{(1)},\ldots,w^{(m)})
 \right|
 \le m\delta K_F.
\]
Indeed, after normalizing each difference
$z^{(j)}-w^{(j)}$ by its $\ell_\infty$ norm, every intermediate
vector lies in the unit polydisc, whose supremum agrees with the torus
supremum by the maximum modulus principle. Hence
\[
 K_F
 \le
 \max_{(w^{(1)},\ldots,w^{(m)})\in
 \mathcal N_1\times\cdots\times\mathcal N_m}
 |S_F(w^{(1)},\ldots,w^{(m)})|
 +m\delta K_F.
\]
Since $\delta=(4m)^{-1}$,
\[
 K_F\le\frac43
 \max_{\mathcal N_1\times\cdots\times\mathcal N_m}|S_F|
 \le
 2\max_{\mathcal N_1\times\cdots\times\mathcal N_m}|S_F|.
\]

For fixed projection sizes, the number of effective rectangles is at most
\[
 \prod_{j=1}^m\binom{n}{v_j}
 \le\exp\left(\sum_{j=1}^m v_j\log\frac{en}{v_j}\right).
\]
The total number of net points over one rectangle is at most
\[
 C_m^{v_1+\cdots+v_m}.
\]
Since $v_j\le v_j\log(en/v_j)$, both costs are bounded by $\exp(C_m'H_n(F))$.  Taking
\[
 t=K_m\sqrt{M H_n(F)}
\]
in \eqref{eq:hoeffding-sign}, with $K_m$ sufficiently large, gives a failure probability at most
\[
 e^{-A_mH_n(F)}
\]
for all effective rectangles with the fixed projection sizes, where $A_m$ may be chosen arbitrarily large by increasing $K_m$. For each effective rectangle and each point of the corresponding product net, let $E(F,z)$ denote the exceptional set. After the preceding union over rectangles and net points, the remaining failure probability for a fixed size vector $\mathbf v=(v_1,\ldots,v_m)$ is bounded by
\[
 \exp\!\left[-A_m\left(1+\sum_{j=1}^m v_j\log\frac{en}{v_j}\right)\right].
\]
Since there are at most $n^m$ size vectors and, for $1\le v_j\le n$, one has $v_j\log(en/v_j)\ge\log(en)$, the total exceptional measure is at most
\[
 n^m\exp\!\bigl(-A_m(1+m\log(en))\bigr)<1
\]
for $A_m$ sufficiently large. Its complement therefore has positive $\nu_\Lambda$-measure.  Every sign assignment in that complement satisfies \eqref{eq:rect-sign} for all traces simultaneously.
\end{proof}

The estimate on
$\mathcal L(\ell_{p_1}^n,\ldots,\ell_{p_m}^n;\mathbb K)$
follows by applying the rectangular estimate to dyadic coordinate
blocks of vectors in the unit balls of the $\ell_{p_j}^n$ spaces.
For a trace $F$, the anisotropic coordinate contribution is
\[
\prod_{j=1}^m v_j(F)^{2/p_j};
\]
compare \cite[Proposition~5.1]{BarbosaPellegrinoRaposoTeixeira2026}.

\begin{proposition}\label{prop:local-to-norm}
Let $m\ge2$, $\mathbf p\in[2,\infty]^m$, $\lambda>0$, and let $\Lambda\subset[n]^m$ be $(\mathbf p,\lambda)$-sparse.  Then there are signs $\eps_{\mathbf i}\in\{-1,1\}$ such that
\begin{equation}\label{eq:local-to-norm}
 \left\|\sum_{\mathbf i\in\Lambda}\eps_{\mathbf i}
 x^{(1)}_{i_1}\cdots x^{(m)}_{i_m}\right\|_{
 \cL(\ell_{p_1}^n,\ldots,\ell_{p_m}^n)}
 \le C_{m,\mathbf p}n^{\lambda/2}(1+\log n)^m.
\end{equation}
\end{proposition}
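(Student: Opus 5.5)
We take the signs $\eps_{\mathbf i}$ given by Proposition~\ref{prop:rect-sign} for the set $\Lambda$. With these signs fixed, we bound $|T(x^{(1)},\ldots,x^{(m)})|$ for $\|x^{(j)}\|_{p_j}\le1$ by a dyadic decomposition. By splitting into real and imaginary parts and absorbing phases into the unimodular variables $z$ of \eqref{eq:rect-sign}, we may assume each $x^{(j)}$ has nonnegative entries. Coordinates with $x^{(j)}_i<n^{-m}$ contribute in total at most a constant: their contribution is at most $|\Lambda|\,n^{-m}\le1$, since all other entries are at most $1$. For each $j$ and $k=0,1,\ldots,\lfloor m\log_2 n\rfloor$, we set
\[
 A^{(j)}_k:=\{i: 2^{-k-1}<x^{(j)}_i\le 2^{-k}\}.
\]
Then $|A^{(j)}_k|\le 2^{(k+1)p_j}$, which is the only place where $\|x^{(j)}\|_{p_j}\le1$ is used. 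This yields at most $(1+m\log_2 n)^m$ block tuples $\mathbf k=(k_1,\ldots,k_m)$, and this count accounts for the factor $(1+\log n)^m$ in \eqref{eq:local-to-norm}.

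Fix a block tuple $\mathbf k$ and let $F:=\Lambda\cap(A^{(1)}_{k_1}\times\cdots\times A^{(m)}_{k_m})$, assumed nonempty, in its effective representation. On $F$ we write $x^{(j)}_i=2^{-k_j}z^{(j)}_i$ with $|z^{(j)}_i|\le1$. Proposition~\ref{prop:rect-sign} then bounds the contribution of the block by
\[
 C_m\,2^{-\sum_j k_j}\sqrt{M(F)H_n(F)}.
\]
Since $\Lambda$ is $(\mathbf p,\lambda)$-sparse and $F\subset\Lambda$, \eqref{eq:lambda-sparse} gives $\sqrt{M(F)H_n(F)}\le n^{\lambda/2}\prod_j v_j(F)^{1/p_j}$. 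We also have $v_j(F)\le|A^{(j)}_{k_j}|\le 2^{(k_j+1)p_j}$, so $v_j(F)^{1/p_j}\le 2^{k_j+1}$, with the obvious reading $v_j^{0}=1\le 2^{k_j+1}$ when $p_j=\infty$. Hence each block contributes at most $2^mC_m n^{\lambda/2}$, uniformly in $\mathbf k$.

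Summing over the at most $(1+m\log_2 n)^m\lesssim_m(1+\log n)^m$ block tuples, and adding the constant tail term, which is dominated by the main term since $n^{\lambda/2}\ge1$, gives \eqref{eq:local-to-norm}.

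The step requiring the most care is the matching of scales. The trace $F$ must be a genuine rectangular trace of $\Lambda$, so that the simultaneous sign estimate of Proposition~\ref{prop:rect-sign} applies to it with the same signs for every $\mathbf k$. The sparsity weight $\prod_j v_j^{2/p_j}$ must then exactly cancel the amplitude $2^{-\sum_jk_j}$. Both facts hold because the dyadic levels control the projection sizes through $v_j\le 2^{(k_j+1)p_j}$, and because \eqref{eq:rect-sign} holds for all traces at once.
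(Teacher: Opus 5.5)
Your proof is correct and follows the paper's argument. You take signs from Proposition~\ref{prop:rect-sign}, split each $x^{(j)}$ dyadically into $O(\log n)$ blocks, and apply the simultaneous trace estimate to each block tuple; the amplitude then cancels against $\prod_j v_j^{1/p_j}$ via \eqref{eq:lambda-sparse}, and you sum over $O\bigl((1+\log n)^m\bigr)$ tuples.

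The only difference is the kind of dyadic splitting. You split by dyadic value levels, which forces the truncation at $n^{-m}$ and about $m\log_2 n$ levels. The paper instead splits the nonincreasing rearrangement into dyadic rank blocks, where $|x^{(j)}_i|\le|A_{j,k}|^{-1/p_j}$ holds automatically and there are only $\lceil\log_2 n\rceil+1$ blocks, so no tail term is needed.
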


\begin{proof}
Choose the signs from Proposition~\ref{prop:rect-sign}.  Fix $x^{(j)}\in B_{\ell_{p_j}^n}$.  For each $j$, rearrange the coordinates of $x^{(j)}$ in nonincreasing order of modulus and divide the indices into dyadic rank blocks
\[
 A_{j,0},A_{j,1},\ldots,A_{j,L_j},
 \qquad L_j\le\lceil\log_2n\rceil.
\]
For $i\in A_{j,k}$,
\begin{equation}\label{eq:rank-bound-new}
 |x_i^{(j)}|\le C_{p_j}|A_{j,k}|^{-1/p_j}.
\end{equation}
For a block tuple $\mathbf k=(k_1,\ldots,k_m)$ set
\[
 F_{\mathbf k}:=\Lambda\cap
 (A_{1,k_1}\times\cdots\times A_{m,k_m}).
\]
If this trace is nonempty, let $v_j=|\pi_j(F_{\mathbf k})|$ and $M=|F_{\mathbf k}|$.  Since $v_j\le|A_{j,k_j}|$ and $p_j\ge2$, we have
\[
 |A_{j,k_j}|^{-1/p_j}\le v_j^{-1/p_j}.
\]
Thus replacing the block-size factors in \eqref{eq:rank-bound-new} by the larger projection-size factors preserves the upper estimate.  Proposition~\ref{prop:rect-sign} therefore gives
\begin{align*}
 \left|\sum_{\mathbf i\in F_{\mathbf k}}\eps_{\mathbf i}
 x^{(1)}_{i_1}\cdots x^{(m)}_{i_m}\right|
 &\le C_{m,\mathbf p}
 \left(\prod_{j=1}^m v_j^{-1/p_j}\right)
 \sqrt{M H_n(F_{\mathbf k})}\\
 &\le C_{m,\mathbf p}n^{\lambda/2}
\end{align*}
by \eqref{eq:lambda-sparse}.  There are at most $(1+\lceil\log_2n\rceil)^m$ block tuples, and summing over them proves \eqref{eq:local-to-norm}.
\end{proof}

\begin{corollary}\label{cor:sparse-upper}
Let $m\ge2$, $1\le d<m$, $\mathbf p\in[2,\infty]^m$, and $\sigma\ge1/2$.  For every $\eta>0$ and all sufficiently large $n$ there is a unimodular form with at least $n^d$ active coefficients and
\begin{equation}\label{eq:sparse-upper-alpha}
 \|T_n\|\lesssim_{m,d,\mathbf p,\eta}
 n^{\alpha_{m,d}(\mathbf p)+\eta}.
\end{equation}
\end{corollary}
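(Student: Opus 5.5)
The plan is to combine Proposition~\ref{prop:quant-sparse} with Proposition~\ref{prop:local-to-norm}. Fix $\eta>0$ and put $\lambda:=L_{m,d}(\mathbf p)+\eta$. Since $\sigma\ge1/2$, we have $L_{m,d}(\mathbf p)=2\alpha_{m,d}(\mathbf p)$, so $\lambda/2=\alpha_{m,d}(\mathbf p)+\eta/2$. Because $\lambda>L_{m,d}(\mathbf p)$, Proposition~\ref{prop:quant-sparse} applies. It yields $n_0$ such that for every $n\ge n_0$ there is a $(\mathbf p,\lambda)$-sparse set $\Lambda_n\subset[n]^m$ with $n^d\le|\Lambda_n|\le3n^d$. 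In particular $|\Lambda_n|\ge\lfloor n^d\rfloor$, which is the required number of active coefficients.

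Next I apply Proposition~\ref{prop:local-to-norm} to $\Lambda_n$. It supplies signs $\eps_{\mathbf i}\in\{-1,1\}$, which are unimodular over either scalar field. The form $T_n:=T_{\Lambda_n,\eps}$ then satisfies
\[
 \|T_n\|_{\cL(\ell_{p_1}^n,\ldots,\ell_{p_m}^n)}
 \le C_{m,\mathbf p}\,n^{\alpha_{m,d}(\mathbf p)+\eta/2}(1+\log n)^m .
\]
The real-scalar case needs no separate treatment. The sign form is real, so its real norm is at most its complex norm, and Proposition~\ref{prop:rect-sign} was already proved for complex test vectors.

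The only remaining step is to absorb the logarithmic factor. Since $(1+\log n)^m\le C_{m,\eta}n^{\eta/2}$ for all $n\ge2$, we get $\|T_n\|\lesssim_{m,d,\mathbf p,\eta}n^{\alpha_{m,d}(\mathbf p)+\eta}$ for $n\ge n_0$, which is \eqref{eq:sparse-upper-alpha}. If the bound is wanted for every $n\ge2$, the finitely many $n<n_0$ are covered by taking the full support $[n]^m$ with any signs. Its norm is bounded by a constant depending only on $m$ and $n_0$, and this constant can be absorbed into the implied constant.

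There is no real obstacle here. All the probabilistic work sits in Proposition~\ref{prop:quant-sparse} and Proposition~\ref{prop:rect-sign}, and the corollary is bookkeeping. The points to get right are checking the identity $L_{m,d}=2\alpha_{m,d}$ in the range $\sigma\ge1/2$ and splitting $\eta$ between the sparsity margin and the logarithmic loss.
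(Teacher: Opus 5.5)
Your proposal is correct and is essentially the paper's own proof. You pick $\lambda$ just above $L_{m,d}(\mathbf p)=2\alpha_{m,d}(\mathbf p)$, get a $(\mathbf p,\lambda)$-sparse support from Proposition~\ref{prop:quant-sparse}, sign it via Proposition~\ref{prop:local-to-norm}, and absorb $(1+\log n)^m$ into $n^{\eta/2}$; your choice $\lambda=L_{m,d}(\mathbf p)+\eta$ works as well as the paper's condition $\lambda/2<\alpha_{m,d}(\mathbf p)+\eta/2$, and your remarks on real scalars and small $n$ are harmless but not needed for the statement as posed.
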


\begin{proof}
Choose $\lambda>L_{m,d}(\mathbf p)=2\alpha_{m,d}(\mathbf p)$ so close to $2\alpha_{m,d}(\mathbf p)$ that $\lambda/2<\alpha_{m,d}(\mathbf p)+\eta/2$.  Proposition~\ref{prop:quant-sparse} and Proposition~\ref{prop:local-to-norm} give
\[
 \|T_n\|\lesssim n^{\lambda/2}(1+\log n)^m,
\]
and the logarithmic factor is at most $n^{\eta/2}$ for large $n$.
\end{proof}

\subsection{Upper constructions}
When $\sigma\le1/2$, the optimal polynomial geometry is obtained by packing the active coefficients into a smaller full box.

\begin{proposition}\label{prop:compressed-upper}
Let $m\ge2$, $d\in[1,m]$, and $\mathbf p\in[2,\infty]^m$ satisfy $\sigma\le1/2$.  Then
\begin{equation}\label{eq:compressed-upper}
 \cK^{\K}_{m,d,\mathbf p}(n)
 \lesssim_m n^{d(m+1-2\sigma)/(2m)}.
\end{equation}
\end{proposition}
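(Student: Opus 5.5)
The plan is to pack the support into a full cube. Set $s:=\lceil n^{d/m}\rceil$, so that $s\le n$ because $d\le m$ (and $s^m\ge n^d\ge\lfloor n^d\rfloor$). Take $\Lambda:=[s]^m\subset[n]^m$ together with the Kahane--Salem--Zygmund signs $\eps_{\mathbf i}\in\{-1,1\}$ given by Lemma~\ref{lem:classical-KSZ} at size $s$. Then $|\Lambda|=s^m\ge\lfloor n^d\rfloor$, so $T_{\Lambda,\eps}$ is an admissible competitor in the infimum defining $\cK^{\K}_{m,d,\mathbf p}(n)$.

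Next, compare the norm of this form on $\ell_{p_1}^n\times\cdots\times\ell_{p_m}^n$ with its norm on $\ell_{p_1}^s\times\cdots\times\ell_{p_m}^s$. The form depends only on the first $s$ coordinates of each variable. Moreover, the coordinate restriction $x\mapsto(x_1,\ldots,x_s)$ maps $B_{\ell_{p_j}^n}$ into $B_{\ell_{p_j}^s}$. Hence the two norms coincide, and Lemma~\ref{lem:classical-KSZ} gives
\[
 \|T_{\Lambda,\eps}\|\le C_m s^{(m+1)/2-\sigma}.
\]
Since the signs are real, this works over both $\R$ and $\C$.

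It remains to convert the bound back to $n$. The exponent $(m+1)/2-\sigma$ is positive (as $\sigma\le 1/2$), and $s\le 2n^{d/m}$. Therefore
\[
 s^{(m+1)/2-\sigma}\le 2^{(m+1)/2}\,n^{\frac dm\left(\frac{m+1}2-\sigma\right)}=2^{(m+1)/2}\,n^{d(m+1-2\sigma)/(2m)},
\]
which is exactly the claimed exponent, with a constant depending only on $m$.

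There is no real obstacle. The only points to check are that $s\le n$, which holds because $d\le m$, and that the constant in Lemma~\ref{lem:classical-KSZ} depends only on $m$, uniformly in $\mathbf p$. Both are immediate, and the estimate holds with $\eta=0$, as asserted in Theorem~\ref{thm:A}.
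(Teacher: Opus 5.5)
Your proposal is correct and is essentially the paper's own proof: you pack the support into the full cube $[s]^m$ with $s=\lceil n^{d/m}\rceil$, take the real sign form from Lemma~\ref{lem:classical-KSZ}, and extend it by zero to dimension $n$. Your explicit checks that $s\le n$ and $s\le 2n^{d/m}$, and your comparison of the norms in dimensions $s$ and $n$, fill in details the paper leaves implicit.
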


\begin{proof}
Let $s=\lceil n^{d/m}\rceil$.  By Lemma~\ref{lem:classical-KSZ}, there is a full sign form on
$\ell_{p_1}^s\times\cdots\times\ell_{p_m}^s$ with $s^m\ge n^d$ active coefficients and norm at most
\[
 C_m s^{(m+1)/2-\sigma}
 \lesssim_m n^{d(m+1-2\sigma)/(2m)}.
\]
Extending the form by zero to the ambient dimension $n$ proves the claim.
\end{proof}

\begin{corollary}\label{cor:pinfty-constant}
For every fixed $m\ge2$ and $d\in[1,m]$,
\begin{equation}\label{eq:pinfty-limsup}
 \limsup_{n\to\infty}
 \frac{\cK^{\K}_{m,d,\infty}(n)}{n^{d(m+1)/(2m)}}\le1.
\end{equation}
\end{corollary}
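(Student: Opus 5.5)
The plan is to make the compressed-box construction of Proposition~\ref{prop:compressed-upper} explicit and to keep track of the constant, instead of absorbing it into $\lesssim_m$. For $p_j=\infty$ we have $\sigma=0$. The exponent is then $d(m+1)/(2m)$, and for $s=\lceil n^{d/m}\rceil$ this box has $s^m\ge n^d\ge\lfloor n^d\rfloor$ entries. It therefore suffices to produce, for each large $s$, a unimodular form on $(\ell_\infty^s)^m$ with full support $[s]^m$ and norm at most $(1+o(1))s^{(m+1)/2}$. Given such a form, we extend it by zero to $[n]^m$. Since $s\le n^{d/m}+1$, we get $s^{(m+1)/2}=n^{d(m+1)/(2m)}(1+n^{-d/m})^{(m+1)/2}=(1+o(1))n^{d(m+1)/(2m)}$, and the $\limsup$ bound \eqref{eq:pinfty-limsup} follows.

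For $m=2$ the required form is exact: the Fourier matrix $\varepsilon_{jk}=\omega^{jk}$, with $\omega=e^{2\pi i/s}$, is unitary up to the factor $\sqrt s$. Its norm on $\ell_\infty^s\times\ell_\infty^s$ is therefore at most $\sqrt s\,\|x\|_2\|y\|_2\le s^{3/2}$. For general $m$ we proceed by induction with the tensor-type form
\[
T(x^{(1)},\dots,x^{(m)})=\sum_{\mathbf i}\omega^{i_{m-1}i_m}\,\phi(i_1,\dots,i_{m-1})\,x^{(1)}_{i_1}\cdots x^{(m)}_{i_m}.
\]
An iterated Fourier (Gauss-sum) structure should work here: take $\varepsilon_{\mathbf i}=\exp\bigl(2\pi i\,Q(\mathbf i)/s\bigr)$ with $Q$ multilinear, for instance $Q(\mathbf i)=\sum_{k<m}i_ki_{k+1}$ with $s$ prime. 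Fix $x^{(1)},\dots,x^{(m-2)}$ in the unit polydisc. The remaining bilinear part is then a matrix $D\,\mathcal F\,D'$, where $\mathcal F$ is the Fourier matrix. Summing the outer variables crudely and using Cauchy--Schwarz gives norm $s^{m-2}\cdot\sqrt s\cdot s=s^{m-1/2}$. For $m\ge3$ this exceeds $s^{(m+1)/2}$, so the crude argument is insufficient.

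The main obstacle is exactly this case $m\ge3$: deterministic constructions lose powers of $s$, and the probabilistic KSZ argument (Lemma~\ref{lem:classical-KSZ}) gives only $C_m s^{(m+1)/2}$ with an unspecified $C_m$. The first approach to try is random signs with a sharp concentration estimate. For fixed $z\in\T^{s}\times\cdots\times\T^s$, Hoeffding gives $|S(z)|\le t$ except on a set of probability $4e^{-t^2/(2s^m)}$. A $\delta$-net of each torus factor has $(C/\delta)^{s}$ points, so the union bound over $(C/\delta)^{ms}$ points needs $t^2\approx 2ms^{m+1}\log(C/\delta)$. This yields a constant of order $\sqrt{2m\log(1/\delta)}$, which is not $1$. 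Plain union bounds are therefore insufficient, and the argument has to rest on the precise limit statement available in the literature for $\ell_\infty$. Concretely, I would invoke the sharp KSZ asymptotics of \cite{DefantMastylo2023} or the Bayart-type estimates, which give $\sup_z|S(z)|\le(1+o(1))s^{(m+1)/2}$ for well-chosen unimodular coefficients. I would check that the normalization there is indeed constant $1$ in the limit, which is the only nontrivial input, and then feed it into the padding argument of the first paragraph. If the available result only gives a constant $\kappa_m$, the statement would have to be read with $\kappa_m$ in place of $1$. Identifying that sharp constant is the step I expect to be decisive.
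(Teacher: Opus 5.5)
Your padding step is the same as the paper's: take $s=\lceil n^{d/m}\rceil$, extend by zero, and use $s/n^{d/m}\to1$. The proof still has a genuine gap at its only nontrivial point. For $m\ge3$ you never produce a full unimodular form on $(\ell_\infty^s)^m$ with norm $(1+o(1))s^{(m+1)/2}$. You defer this to \cite{DefantMastylo2023} or to ``Bayart-type estimates'' without checking that they give the constant $1$, and you concede the statement might only hold with some $\kappa_m$. The probabilistic argument behind Lemma~\ref{lem:classical-KSZ} only yields an unspecified $C_m$, as your own union-bound computation illustrates. The paper closes exactly this step by citing \cite{PellegrinoRaposo}, which supplies sign forms of norm at most $(1+\varepsilon)s^{(m+1)/2}$ for large $s$.

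In fact your own chain construction closes the gap with constant exactly $1$; only your estimate of it was too crude. Put $\mathcal F_{ab}=\omega^{ab}$ and $\varepsilon_{\mathbf i}=\prod_{k=1}^{m-1}\mathcal F_{i_ki_{k+1}}$. Then
\[
 T(x^{(1)},\ldots,x^{(m)})=(x^{(1)})^{\top}\mathcal F D_{x^{(2)}}\mathcal F D_{x^{(3)}}\cdots D_{x^{(m-1)}}\mathcal F x^{(m)}.
\]
Three facts give the bound:
\begin{itemize}
\item $\|\mathcal F\|_{\ell_2\to\ell_2}=\sqrt s$;
\item $\|D_x\|_{\ell_2\to\ell_2}\le\|x\|_\infty\le1$;
\item $\|x^{(1)}\|_2,\|x^{(m)}\|_2\le\sqrt s$.
\end{itemize}
Together they yield $|T|\le s\cdot s^{(m-1)/2}=s^{(m+1)/2}$ for every $s$, with no primality needed. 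Your loss came from bounding the contracted vector $c=(x^{(1)})^{\top}\mathcal F D_{x^{(2)}}\cdots D_{x^{(m-2)}}\mathcal F$ entrywise by $s^{m-2}$. Keeping $\|c\|_2\le s^{(m-1)/2}$ instead gives the sharp bound. This settles $\K=\C$, and even gives $\cK^{\C}_{m,d,\infty}(n)\le\lceil n^{d/m}\rceil^{(m+1)/2}$ for every $n$.

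For $\K=\R$ the coefficients must be $\pm1$, so replace $\mathcal F$ by the $s\times s$ corner of a Hadamard matrix of order $s'\ge s$ with $s'/s\to1$. Paley matrices of order $q+1$, with $q\equiv3\pmod 4$ prime, are dense enough by the prime number theorem in progressions. A corner is needed, since $s=n$ when $d=m$. The corner has operator norm at most $\sqrt{s'}$, and the same computation gives $|T|\le s\,(s')^{(m-1)/2}=(1+o(1))s^{(m+1)/2}$.

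Compared with the paper's citation, this route is self-contained and elementary.
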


\begin{proof}
Set $s=\lceil n^{d/m}\rceil$. By \cite{PellegrinoRaposo}, for fixed $m$ and every $\varepsilon>0$, once $s$ is large enough one may choose a full $m$-linear sign form on $(\ell_\infty^s)^m$ with norm at most
\[
 (1+\varepsilon)s^{(m+1)/2}.
\]
Extend it by zero to $(\ell_\infty^n)^m$.  Since $s^m\ge n^d$ and $s/n^{d/m}\to1$, division by $n^{d(m+1)/(2m)}$ and then $\varepsilon\downarrow0$ give \eqref{eq:pinfty-limsup}.
\end{proof}

\subsection{Completion of the proof}

\begin{proof}
Fix $d\in[1,m]$ first.  All upper constructions below use real signs and are estimated over $\C$; restriction to real vectors gives the same upper bounds over $\R$.  Suppose $\sigma\le1/2$.  Proposition~\ref{prop:high-lower} gives
\[
 \cK^{\K}_{m,d,\mathbf p}(n)
 \gtrsim n^{d(m+1-2\sigma)/(2m)},
\]
while Proposition~\ref{prop:compressed-upper} gives the reverse estimate up to a constant.  This proves \eqref{eq:main-asymptotic} in the low-sum range.

Now suppose $\sigma\ge1/2$.  Proposition~\ref{prop:global-lower}, Proposition~\ref{prop:section-lower}, and the basis-vector estimate give
\[
 \cK^{\K}_{m,d,\mathbf p}(n)
 \gtrsim n^{\alpha_{m,d}(\mathbf p)}.
\]
If $d<m$, Corollary~\ref{cor:sparse-upper} gives the matching upper power up to $n^\eta$.  If $d=m$, Lemma~\ref{lem:classical-KSZ} gives the upper power
\[
 \frac{m+1}{2}-\sigma.
\]
This is exactly the global term in \eqref{eq:alpha-aniso}.  It also dominates every sectional term.  Indeed, for $I\in\mathcal I(\mathbf p)$, writing $r=|I|$ and $J=[m]\setminus I$, the bounds $\sigma_I\ge1/2$ and $\sigma_J\le(m-r)/2$ give
\begin{align*}
 \frac{m+1}{2}-\sigma-r(1-\sigma_I)
 &=\frac{m+1-2r}{2}+(r-1)\sigma_I-\sigma_J\\
 &\ge0.
\end{align*}
Thus the full-support anisotropic exponent is recovered exactly.

For the coefficient-count formulation, first suppose $1<d<m$.  If $N_n=n^{d+o(1)}$, then for every sufficiently small $\delta>0$,
\[
 n^{d-\delta}\le N_n\le n^{d+\delta}
\]
for all large $n$.  Monotonicity of $N\mapsto\sK^{\K}_{m,\mathbf p}(n,N)$ and the fixed-$d$ result at $d-\delta$ and $d+\delta$ give the same logarithmic exponent after letting $\delta\downarrow0$.  The endpoint $d=m$ follows similarly using the full-support estimate.

At $d=1$ one must also allow sequences with $N_n<n$.  No truncation argument is needed.  If $\sigma\ge1$, evaluation on a nonzero coefficient gives the lower bound $1$, while a diagonal form with $N_n$ coefficients has norm $1$ when $N_n\le n$.  If $1/2\le\sigma<1$, the Hardy--Littlewood coefficient estimate applied directly to the whole active set gives
\[
 \sK^{\K}_{m,\mathbf p}(n,N_n)\gtrsim N_n^{\,1-\sigma},
\]
and the diagonal form gives the reverse estimate when $N_n\le n$.  If $\sigma\le1/2$, the low-sum coefficient estimate and the compressed-box construction give
\[
 \sK^{\K}_{m,\mathbf p}(n,N_n)
 =N_n^{(m+1-2\sigma)/(2m)+o(1)}.
\]
If $\sigma\ge1/2$ and $N_n>n$, fix $0<\delta<m-1$.  Since $N_n=n^{1+o(1)}$, eventually $N_n\le\lfloor n^{1+\delta}\rfloor$, and monotonicity together with the fixed-$d$ estimate already established above gives 
\[
 \sK^{\K}_{m,\mathbf p}(n,N_n)
 \le \cK^{\K}_{m,1+\delta,\mathbf p}(n)
 \le C_{\delta,\eta}n^{\alpha_{m,1+\delta}(\mathbf p)+\eta}
\]
for every $\eta>0$.  Letting $\eta,\delta\downarrow0$ and using the continuity of $d\mapsto\alpha_{m,d}(\mathbf p)$ gives the required upper exponent.  The lower exponent follows from the preceding coefficient estimate when $\sigma<1$ and from basis-vector evaluation when $\sigma\ge1$.  Since $N_n=n^{1+o(1)}$, these estimates yield precisely the exponent $\alpha_{m,1}(\mathbf p)$.  This proves the extension to $N_n=n^{d+o(1)}$ also at the endpoint $d=1$.
\end{proof}

\section{Proof of Theorem \ref{thm:B}: weighted Hardy--Littlewood transfer}\label{sec:proof-B}

\subsection{Monomial optimization and coefficient extraction}

For $\alpha\in\Nzero^n$ with $|\alpha|=m$, put
\[
 \alpha^\alpha:=\prod_{j:\,\alpha_j>0}\alpha_j^{\alpha_j}.
\]

\begin{lemma}[Monomial optimization]\label{whl:lem:monomial}
Let $1\le p<\infty$ and $|\alpha|=m$. Then
\begin{equation}\label{whl:eq:monomial-norm}
 \sup_{\|z\|_p\le1}|z^\alpha|
 =\left(\frac{\alpha^\alpha}{m^m}\right)^{1/p}.
\end{equation}
For $p=\infty$, the supremum equals $1$.
\end{lemma}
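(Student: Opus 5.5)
The plan is to treat this as a constrained maximization of $|z^\alpha|=\prod_{j:\alpha_j>0}|z_j|^{\alpha_j}$ over the unit sphere of $\ell_p^n$. Solve it by the weighted arithmetic--geometric mean inequality rather than by Lagrange multipliers.

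The case $p=\infty$ is immediate. The bound $|z_j|\le1$ gives $|z^\alpha|\le1$, and $z=(1,\ldots,1)$ attains it.

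For $1\le p<\infty$, let $S=\{j:\alpha_j>0\}$. We may assume $z_j=0$ for $j\notin S$, since this only decreases $\|z\|_p$ and leaves $z^\alpha$ unchanged. Put $t_j:=|z_j|^p$, so that $\sum_{j\in S}t_j\le1$, and write
\[
 |z^\alpha|^p=\prod_{j\in S}t_j^{\alpha_j}
 =\frac{\alpha^\alpha}{m^m}\cdot\left(\prod_{j\in S}\left(\frac{m\,t_j}{\alpha_j}\right)^{\alpha_j/m}\right)^{m}.
\]
The weights $\alpha_j/m$ are positive and sum to $1$. Weighted AM--GM therefore bounds the inner product by
\[
 \sum_{j\in S}\frac{\alpha_j}{m}\cdot\frac{m\,t_j}{\alpha_j}=\sum_{j\in S}t_j\le1.
\]
This gives $|z^\alpha|^p\le\alpha^\alpha/m^m$, and taking $p$-th roots yields the upper bound in \eqref{whl:eq:monomial-norm}.

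For sharpness, take $z_j=(\alpha_j/m)^{1/p}$ for $j\in S$ and $z_j=0$ otherwise. Then $\|z\|_p^p=\sum_{j\in S}\alpha_j/m=1$. Moreover,
\[
 |z^\alpha|=\prod_{j\in S}(\alpha_j/m)^{\alpha_j/p}=(\alpha^\alpha/m^m)^{1/p},
\]
which is the AM--GM equality case where all $m t_j/\alpha_j$ equal $1$.

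There is no real obstacle here. The only point needing care is the normalization: rewrite the product so that the AM--GM weights are $\alpha_j/m$ and the arithmetic mean collapses to $\sum_j t_j$. Zero exponents are handled by restricting to $S$, which matches the convention $\alpha^\alpha=\prod_{\alpha_j>0}\alpha_j^{\alpha_j}$.
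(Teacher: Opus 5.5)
Your proposal is correct and follows essentially the same route as the paper: substitute $t_j=|z_j|^p$, apply weighted AM--GM with weights $\alpha_j/m$ to get $\prod_{j\in S}t_j^{\alpha_j}\le\prod_{j\in S}(\alpha_j/m)^{\alpha_j}$, and attain equality at $t_j=\alpha_j/m$ on $\supp\alpha$. Your explicit renormalization and the extremal point are exactly the paper's argument written out in slightly more detail.
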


\begin{proof}
Set $x_j=|z_j|^p$. Under the constraint $\sum_jx_j\le1$,
\[
 |z^\alpha|^p=\prod_{j:\,\alpha_j>0}x_j^{\alpha_j}.
\]
Weighted arithmetic--geometric mean gives
\[
 \prod_{j:\,\alpha_j>0}x_j^{\alpha_j}
 \le \prod_{j:\,\alpha_j>0}
 \left(\frac{\alpha_j}{m}\right)^{\alpha_j},
\]
and equality holds for $x_j=\alpha_j/m$ on $\supp\alpha$. This proves \eqref{whl:eq:monomial-norm}. The case $p=\infty$ is immediate.
\end{proof}

For $|\alpha|=m$, consider the coefficient functional
\[
\delta_\alpha:
\mathcal P({}^m\ell_p^n)\longrightarrow\mathbb C,
\qquad
\delta_\alpha(P)=a_\alpha.
\]
Its norm is estimated by optimizing the radii in Cauchy's formula.
A related coefficient estimate appears in
\cite[Lemma~3.3]{BayartDefantSchlueters2019}.
\begin{proposition}[Exact coefficient functional]\label{whl:prop:coefficient-functional}
Let $1\le p\le\infty$. For every $m$-homogeneous polynomial
\[
 P(z)=\sum_{|\alpha|=m}a_\alpha z^\alpha
\]
on $\C^n$ and every $|\alpha|=m$,
\begin{equation}\label{whl:eq:exact-coeff-functional}
 |a_\alpha|
 \le
 \left(\frac{m^m}{\alpha^\alpha}\right)^{1/p}\|P\|_p,
\end{equation}
where the factor is interpreted as $1$ when $p=\infty$. The constant in \eqref{whl:eq:exact-coeff-functional} is optimal for each fixed $\alpha$.
\end{proposition}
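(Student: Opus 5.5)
The approach is Cauchy's formula with radii chosen to balance the monomial's size, followed by an extremal monomial for optimality. Fix $\alpha$ with $|\alpha|=m$ and let $S=\supp\alpha$. For $z\in\C^n$ with $\|z\|_p\le1$ and $z_j\neq0$ on $S$, consider the torus $w_j=z_je^{i\theta_j}$ for $j\in S$ and $w_j=0$ off $S$. Restricting $P$ to such points kills every monomial containing a variable outside $S$. The one-variable Cauchy formula, applied in each $\theta_j$, then gives
\[
 a_\alpha z^\alpha=\frac1{(2\pi)^{|S|}}\int_{[0,2\pi]^{S}}P(w(\theta))\,e^{-i\langle\alpha,\theta\rangle}\,d\theta .
\]
This uses only orthogonality of characters. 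Since $\|w(\theta)\|_p=\|z\|_p\le1$, the integrand is bounded by $\|P\|_p$, and hence
\[
 |a_\alpha|\,|z^\alpha|\le\|P\|_p.
\]

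Next I optimize over $z$. Taking the supremum over the unit ball of $\ell_p^n$ and invoking Lemma~\ref{whl:lem:monomial}, the supremum of $|z^\alpha|$ equals $(\alpha^\alpha/m^m)^{1/p}$, attained at $|z_j|^p=\alpha_j/m$ on $S$ (nonzero there, so the torus argument applies). This yields \eqref{whl:eq:exact-coeff-functional}. For $p=\infty$ I take $z=(1,\ldots,1)$, which gives the factor $1$.

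For optimality I test on the monomial $P(z)=z^\alpha$, for which $a_\alpha=1$. Lemma~\ref{whl:lem:monomial} gives $\|P\|_p=(\alpha^\alpha/m^m)^{1/p}$, so equality holds in \eqref{whl:eq:exact-coeff-functional}, and the constant cannot be lowered for this fixed $\alpha$.

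I do not expect a genuine obstacle here. The only care needed is justifying that the Cauchy extraction isolates $a_\alpha$: distinct multi-indices supported in $S$ give distinct characters, and monomials involving indices outside $S$ vanish because $w_j=0$ there. Everything else follows from the monomial optimization lemma.
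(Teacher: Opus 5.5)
Your proposal is correct and follows essentially the same route as the paper: Cauchy's formula on a torus contained in $B_{\ell_p^n}$, optimization of the radii via Lemma~\ref{whl:lem:monomial}, and the monomial $z^\alpha$ for sharpness. Setting the coordinates off $\supp\alpha$ equal to zero, instead of letting those radii tend to zero as the paper does, is only a cosmetic difference. One small slip: $\|w(\theta)\|_p=\|z\|_p$ holds only when $z$ is supported in $S$, but the argument needs only $\|w(\theta)\|_p\le\|z\|_p$, which always holds.
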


\begin{proof}
Assume first that $p<\infty$. Choose radii $\rho_j>0$ such that
\[
 \sum_{j=1}^n \rho_j^p\le1.
\]
The corresponding polydisc
\[
 \{z\in\C^n:|z_j|\le\rho_j,\ 1\le j\le n\}
\]
is contained in $B_{\ell_p^n}$. Hence Cauchy's coefficient formula
gives
\[
 |a_\alpha|\rho^\alpha\le\|P\|_p.
\]
Optimizing over the coordinates in $\supp\alpha$ and using
\cref{whl:lem:monomial} gives
\[
 \sup_{\sum_j\rho_j^p\le1}\rho^\alpha
 =
 \left(\frac{\alpha^\alpha}{m^m}\right)^{1/p}.
\]
If $\alpha_j=0$ for some $j$, the same value is obtained by first
optimizing on $\supp\alpha$ and then letting the remaining positive
radii tend to zero. Therefore
\[
 |a_\alpha|
 \le
 \left(\frac{m^m}{\alpha^\alpha}\right)^{1/p}
 \|P\|_p.
\]
For $p=\infty$, the estimate follows directly from Cauchy's formula on
the unit polydisc. Finally, equality is attained, after normalization,
by the monomial $P(z)=z^\alpha$, so the constant is optimal.
\end{proof}

\subsection{Diagonal transfer and exact recovery of the classical theory}

Let $m\le s\le\infty$, $1\le p\le s$, and let $r$ satisfy
\[
 \frac1r=\frac1p-\frac1s.
\]
Consider the diagonal operator
\[
 D_u:\ell_s^n\longrightarrow\ell_p^n,
 \qquad
 D_u(z)=(u_jz_j)_{j=1}^n.
\]
H\"older's inequality gives
\begin{equation}\label{whl:eq:Du}
 \|D_u z\|_p\le\|u\|_r\|z\|_s.
\end{equation}
Define
\[
 Q(z):=P(D_u z)
 =\sum_{|\alpha|=m}a_\alpha u^\alpha z^\alpha.
\]
Since $P$ is $m$-homogeneous, \eqref{whl:eq:Du} implies
\[
 \|Q\|_s\le\|u\|_r^m\|P\|_p.
\]
Applying the classical polynomial Hardy--Littlewood inequality \eqref{whl:eq:classical-HL} to $Q$ on $\ell_s^n$ gives
\[
 \left\|(a_\alpha u^\alpha)_{|\alpha|=m}\right\|_{\ell_{q(m,s)}}
 \le
 H^{\mathrm{pol}}_{m,s}(\C)\,\|u\|_r^m\|P\|_p.
\]
Taking the supremum over $\|u\|_r\le1$ gives \eqref{whl:eq:intro-transfer-functional} and proves the transfer assertion in  Theorem \ref{thm:B}.

\begin{corollary}[Exact recovery of the classical Hardy--Littlewood functional]\label{whl:cor:classical-diagonal}
For every $p\ge m$ and every $m$-homogeneous polynomial $P$,
\begin{equation}\label{whl:eq:exact-classical-identity}
 \Wfun_{m,p,p}(P)
 =
 \left\|(a_\alpha)_{|\alpha|=m}\right\|_{\ell_{q(m,p)}}.
\end{equation}
Consequently, the least constant $C$ in
\[
 \Wfun_{m,p,p}(P)\le C\|P\|_p
\]
is exactly $H^{\mathrm{pol}}_{m,p}(\C)$.
\end{corollary}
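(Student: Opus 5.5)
The plan is to prove the identity \eqref{whl:eq:exact-classical-identity} by two inequalities and then read off the statement about constants from the transfer inequality \eqref{whl:eq:intro-transfer-functional}.

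First I would fix the parameters. For $p=s\ge m$ the exponent $r=r(p,p)$ satisfies $1/r=1/p-1/p=0$, so $r=\infty$. Hence $\Wfun_{m,p,p}(P)$ is the supremum of $\|(a_\alpha u^\alpha)_{|\alpha|=m}\|_{\ell_{q(m,p)}}$ over all $u$ with $\|u\|_\infty\le1$. This also covers $p=\infty$, where $q(m,\infty)=2m/(m+1)$, and the critical case $p=m$, where $q(m,m)=\infty$ and the norm is a maximum.

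For the lower bound, take $u=(1,\ldots,1)$. It lies in the unit ball of $\ell_\infty^n$ and gives $u^\alpha=1$ for every $\alpha$, so $\Wfun_{m,p,p}(P)\ge\|(a_\alpha)\|_{\ell_{q(m,p)}}$.

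For the upper bound, if $\|u\|_\infty\le1$ then $|u^\alpha|=\prod_j|u_j|^{\alpha_j}\le1$. Thus $|a_\alpha u^\alpha|\le|a_\alpha|$ coordinatewise. The $\ell_q$ quasi-norm, or norm, is monotone under coordinatewise domination, including $q=\infty$ and $q<1$ should it occur. Taking the supremum over $u$ gives the reverse inequality.

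For the constants, the identity shows that an inequality $\Wfun_{m,p,p}(P)\le C\|P\|_p$ is literally the same statement as \eqref{whl:eq:classical-HL} with $s=p$. The least constant $C$ is therefore exactly $H^{\mathrm{pol}}_{m,p}(\C)$. This is consistent with \eqref{whl:eq:intro-transfer-functional}, which gives the bound $C\le H^{\mathrm{pol}}_{m,p}(\C)$ directly.

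I expect no genuine obstacle. The only points needing care are the convention $r=\infty$ when $p=s$ and the endpoint $p=m$, where $q(m,m)=\infty$ is finite as a constant by Proposition~\ref{whl:prop:coefficient-functional}. Both the identity and the equality of least constants are pointwise, so they hold regardless of whether $H^{\mathrm{pol}}_{m,p}(\C)$ is finite.
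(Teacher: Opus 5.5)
Your proposal is correct and follows the paper's proof essentially step for step. The paper also notes $r=\infty$, uses $|u^\alpha|\le1$ for the upper bound and $u=(1,\ldots,1)$ for the lower bound, and reads off the optimal constant directly from the definition of $H^{\mathrm{pol}}_{m,p}(\C)$.
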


\begin{proof}
When $s=p$, \eqref{whl:eq:rps-intro} gives $r=\infty$. If $\|u\|_\infty\le1$, then $|u^\alpha|\le1$ for every $|\alpha|=m$, and therefore
\[
 \left\|(a_\alpha u^\alpha)_{|\alpha|=m}\right\|_{\ell_{q(m,p)}}
 \le
 \left\|(a_\alpha)_{|\alpha|=m}\right\|_{\ell_{q(m,p)}}.
\]
Taking the supremum over such $u$ proves one inequality in \eqref{whl:eq:exact-classical-identity}. For the reverse inequality, take $u=(1,\ldots,1)$, which belongs to the unit ball of $\ell_\infty^n$ and satisfies $u^\alpha=1$ for every $\alpha$. This proves \eqref{whl:eq:exact-classical-identity}. The assertion about the optimal constant follows from the definition of $H^{\mathrm{pol}}_{m,p}(\C)$ in \eqref{whl:eq:classical-HL}.
\end{proof}

\subsection{Exact maximum coefficient estimate}
\begin{proposition}[Exact weighted maximum]\label{whl:prop:exact-weighted-maximum}
Let $m\ge1$ and $1\le p,t\le\infty$. The least dimension-free constant $C_{m,p,t}$ for which
\begin{equation}\label{whl:eq:intro-max-general}
 \max_{|\alpha|=m}|a_\alpha u^\alpha|
 \le C_{m,p,t}\,\|u\|_t^m\|P\|_p
\end{equation}
holds for all $P$ and $u$ is
\begin{equation}\label{whl:eq:intro-Cexact}
 C_{m,p,t}=
 \begin{cases}
  m^{m(1/p-1/t)},& p\le t,\\[1mm]
  1,& t\le p.
 \end{cases}
\end{equation}
\end{proposition}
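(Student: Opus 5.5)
The plan is to combine the exact coefficient functional of Proposition~\ref{whl:prop:coefficient-functional} with an optimization of the monomial $|u^\alpha|$ over the $\ell_t$ ball, using Lemma~\ref{whl:lem:monomial}. For fixed $\alpha$, Proposition~\ref{whl:prop:coefficient-functional} gives
\[
 |a_\alpha u^\alpha|\le \left(\frac{m^m}{\alpha^\alpha}\right)^{1/p}|u^\alpha|\,\|P\|_p ,
\]
and Lemma~\ref{whl:lem:monomial} applied in $\ell_t$ gives, by homogeneity,
\[
 |u^\alpha|\le \left(\frac{\alpha^\alpha}{m^m}\right)^{1/t}\|u\|_t^m .
\]
Hence $|a_\alpha u^\alpha|\le (m^m/\alpha^\alpha)^{1/p-1/t}\|u\|_t^m\|P\|_p$. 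The best constant is therefore
\[
 \sup_{|\alpha|=m}\left(\frac{m^m}{\alpha^\alpha}\right)^{1/p-1/t},
\]
provided each single-$\alpha$ bound is sharp. The conventions $1/\infty=0$ cover $p=\infty$ or $t=\infty$.

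Next I evaluate the supremum. Since $1\le\alpha^\alpha\le m^m$, with $\alpha^\alpha=1$ for squarefree $\alpha$ (possible once $n\ge m$) and $\alpha^\alpha=m^m$ for $\alpha=me_1$, the ratio $m^m/\alpha^\alpha$ ranges over $[1,m^m]$. If $p\le t$ the exponent $1/p-1/t$ is nonnegative, so the supremum is $m^{m(1/p-1/t)}$, attained at squarefree $\alpha$. If $t\le p$ the exponent is nonpositive, so the supremum is $1$, attained at $\alpha=me_1$. This gives \eqref{whl:eq:intro-Cexact}. Since the constant is dimension-free, it is a supremum over all $n$, and we may take $n\ge m$.

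For optimality, fix the extremal $\alpha$ and take $P(z)=z^\alpha$. Then $a_\alpha=1$ and $\|P\|_p=(\alpha^\alpha/m^m)^{1/p}$ by Lemma~\ref{whl:lem:monomial}. Choose $u$ with $|u_j|^t=\alpha_j/m$ on $\supp\alpha$ and $u_j=0$ elsewhere, so that $\|u\|_t=1$ and $|u^\alpha|=(\alpha^\alpha/m^m)^{1/t}$. With these choices the inequality becomes an equality with the claimed constant. For $t=\infty$ take $u=(1,\ldots,1)$ on $\supp\alpha$; for $p=\infty$ the norm of $z^\alpha$ is $1$.

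The only delicate point is the bookkeeping of the endpoint cases. The first is $p=\infty$ or $t=\infty$, where the factors are interpreted as $1$. The second is that the squarefree extremal requires $n\ge m$. This is harmless because the constant is dimension-free: the case $n<m$ is dominated by the case $n\ge m$, since a polynomial on $\C^n$ extends to $\C^{n'}$ for $n'>n$ with unchanged norm. Apart from this, no genuine obstacle arises.
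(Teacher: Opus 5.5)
Your proof is correct and follows the paper's argument essentially step for step. You combine Proposition~\ref{whl:prop:coefficient-functional} with Lemma~\ref{whl:lem:monomial} in $\ell_t$ to get the factor $(m^m/\alpha^\alpha)^{1/p-1/t}$, split on the sign of $1/p-1/t$, and prove sharpness with $P=z^\alpha$ and $|u_j|^t=\alpha_j/m$ on $\supp\alpha$; at the extremal multi-indices this is exactly the paper's pair of examples $P=z_1\cdots z_m$, $u=m^{-1/t}(1,\ldots,1)$ and $P=z_1^m$, $u=e_1$.
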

\begin{proof}
By homogeneity, it is enough to assume $\|u\|_t=1$. Applying
\cref{whl:prop:coefficient-functional} and then
\cref{whl:lem:monomial} with exponent $t$, we obtain, for every
$|\alpha|=m$,
\begin{align}
 |a_\alpha u^\alpha|
 &\le
 \left(\frac{m^m}{\alpha^\alpha}\right)^{1/p}
 |u^\alpha|\,\|P\|_p \notag\\
 &\le
 \left(\frac{m^m}{\alpha^\alpha}\right)^{1/p-1/t}
 \|P\|_p.
 \label{whl:eq:alpha-bound}
\end{align}

Suppose first that $p\le t$. Then $1/p-1/t\ge0$, and
\[
 1\le\frac{m^m}{\alpha^\alpha}\le m^m.
\]
The upper value is attained by a square-free multi-index. Therefore
\[
 \max_{|\alpha|=m}|a_\alpha u^\alpha|
 \le
 m^{m(1/p-1/t)}\|P\|_p.
\]
To prove sharpness, take $n=m$,
\[
 P(z)=z_1\cdots z_m,
 \qquad
 u=m^{-1/t}(1,\ldots,1).
\]
Then
\[
 \|u\|_t=1,\qquad
 \|P\|_p=m^{-m/p},
 \qquad
 |u_1\cdots u_m|=m^{-m/t},
\]
and hence the constant cannot be smaller than
$m^{m(1/p-1/t)}$.

Now suppose that $t\le p$. Since $1/p-1/t\le0$ and
$m^m/\alpha^\alpha\ge1$, \eqref{whl:eq:alpha-bound} gives
\[
 \max_{|\alpha|=m}|a_\alpha u^\alpha|
 \le \|P\|_p.
\]
The choice $P(z)=z_1^m$ and $u=e_1$ shows that the constant $1$ is
optimal.
\end{proof}
\begin{corollary}[Unweighted maximum coefficient]\label{whl:cor:unweighted-max}
For $1\le p<\infty$,
\[
 \max_{|\alpha|=m}|a_\alpha|
 \le m^{m/p}\|P\|_p,
\]
and the constant $m^{m/p}$ is optimal. In particular, at $p=m$ one obtains
\[
 H^{\mathrm{pol}}_{m,m}(\C)=m.
\]
\end{corollary}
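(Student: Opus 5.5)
The plan is to read off Corollary~\ref{whl:cor:unweighted-max} directly from Proposition~\ref{whl:prop:exact-weighted-maximum} by removing the weight, that is, by taking $t=\infty$ and $u=(1,\ldots,1)$. Since $\|u\|_\infty=1$ and $u^\alpha=1$ for every $\alpha$, inequality \eqref{whl:eq:intro-max-general} reduces to an unweighted maximum-coefficient bound. Because $p<\infty=t$, the relevant case is $p\le t$, and the constant is $C_{m,p,\infty}=m^{m/p}$. Alternatively, and more directly, I would apply Proposition~\ref{whl:prop:coefficient-functional}. It gives $|a_\alpha|\le (m^m/\alpha^\alpha)^{1/p}\|P\|_p$, and $\alpha^\alpha\ge1$ for every $|\alpha|=m$, so the right-hand side is at most $m^{m/p}\|P\|_p$.

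For optimality I would use a square-free monomial. Take $n=m$ and $P(z)=z_1\cdots z_m$. By Lemma~\ref{whl:lem:monomial} with $\alpha=(1,\ldots,1)$ we get $\alpha^\alpha=1$, so $\|P\|_p=m^{-m/p}$, while the coefficient equals $1$. This forces the constant to be at least $m^{m/p}$. The constant does not depend on $n$, since padding with extra variables changes neither $\|P\|_p$ nor the coefficient. Hence $m^{m/p}$ is the least dimension-free constant.

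For the last claim, recall that at $s=m$ we have $q(m,m)=\infty$. Hence $H^{\mathrm{pol}}_{m,m}(\C)$ is by definition \eqref{whl:eq:classical-HL} the optimal constant in $\max_\alpha|a_\alpha|\le C\|P\|_m$. Substituting $p=m$ gives $m^{m/m}=m$.

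There is no real obstacle in this argument. The only point needing care is to confirm that "optimal" in \eqref{whl:eq:classical-HL} means the supremum over all dimensions $n$. The extremal example lives in dimension $m$ and embeds isometrically into every $\ell_m^n$ with $n\ge m$, so that reading is consistent with the value $m$.
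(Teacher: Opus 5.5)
Your proposal is correct and is essentially the paper's own argument: the corollary is the case $t=\infty$, $u=(1,\ldots,1)$ of Proposition~\ref{whl:prop:exact-weighted-maximum}, whose upper bound rests on Proposition~\ref{whl:prop:coefficient-functional} together with $\alpha^\alpha\ge1$, and whose sharpness example $P(z)=z_1\cdots z_m$ on $\ell_p^m$ (with $u=m^{-1/t}(1,\ldots,1)=(1,\ldots,1)$ at $t=\infty$) is exactly the one you use. The identification $H^{\mathrm{pol}}_{m,m}(\C)=m$ then follows from $q(m,m)=\infty$, as you say.
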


\subsection{Critical continuation below $p=m$}

Suppose first that $1\le p<m$. With
\[
 r_{m,p}=\frac{mp}{m-p},
\]
one has
\[
 \frac1p-\frac1{r_{m,p}}=\frac1m.
\]
Since $p<r_{m,p}$, Proposition~\ref{whl:prop:exact-weighted-maximum} gives
\[
 \max_{|\alpha|=m}|a_\alpha u^\alpha|
 \le
 m^{m(1/p-1/r_{m,p})}
 \|u\|_{r_{m,p}}^m\|P\|_p
 =m\|u\|_{r_{m,p}}^m\|P\|_p.
\]
The constant is optimal by Proposition~\ref{whl:prop:exact-weighted-maximum}. If $p=m$, take $r_{m,m}=\infty$. The same proposition gives
\[
 \max_{|\alpha|=m}|a_\alpha u^\alpha|
 \le m\|u\|_\infty^m\|P\|_m,
\]
again with optimal constant $m$. This proves the critical continuation assertion in  Theorem \ref{thm:B}.

The critical branch also follows directly from Theorem~\ref{thm:B} by choosing the boundary source $s=m$. In this case $q(m,m)=\infty$ and
\[
 \frac1r=\frac1p-\frac1m,
\]
so $r=mp/(m-p)$ for $p<m$. Proposition~\ref{whl:prop:exact-weighted-maximum} adds the sharpness of the constant.

\begin{corollary}[Attachment at the critical endpoint]\label{whl:cor:attachment}
As $p\uparrow m$,
\[
 \frac{mp}{m-p}\longrightarrow\infty.
\]
Hence the weighted branch in Theorem~\ref{thm:B} reaches
\[
 \max_{|\alpha|=m}|a_\alpha u^\alpha|
 \le m\|u\|_\infty^m\|P\|_m.
\]
Taking $u=(1,\ldots,1)$ gives the classical Hardy--Littlewood endpoint
\[
 \max_{|\alpha|=m}|a_\alpha|
 \le m\|P\|_m.
\]
\end{corollary}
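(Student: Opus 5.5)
The statement to prove is Corollary~\ref{whl:cor:attachment}. It has two parts: the limit $mp/(m-p)\to\infty$, and the identification of the endpoint inequality. My plan is to read both directly off the results already established, without any new estimates.

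First, the limit. For $1\le p<m$ write
\[
 \frac{1}{r_{m,p}}=\frac1p-\frac1m=\frac{m-p}{mp}.
\]
As $p\uparrow m$, the numerator $m-p$ tends to $0^+$ while $mp\to m^2>0$. Hence $1/r_{m,p}\downarrow0$, that is, $r_{m,p}\to\infty$. This matches the convention $r_{m,m}=\infty$ in Theorem~\ref{thm:B}, so the exponent $r_{m,p}$ depends continuously on $p$ in the extended sense up to and including $p=m$.

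Second, the endpoint inequality. I will not pass to a limit in the inequalities themselves. Instead I invoke the case $p=m$ directly: apply Proposition~\ref{whl:prop:exact-weighted-maximum} with $p=m$ and $t=\infty$. Since $p\le t$, the constant is
\[
 m^{m(1/m-0)}=m .
\]
This gives $\max_{|\alpha|=m}|a_\alpha u^\alpha|\le m\|u\|_\infty^m\|P\|_m$, which is exactly what was recorded at the end of the critical continuation argument, with $m$ optimal.

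It is still worth checking that the weighted branch really converges to this endpoint. For fixed $P$ and $u$ in fixed dimension $n$, one has $\|u\|_{r_{m,p}}\to\|u\|_\infty$ and $\|P\|_p\to\|P\|_m$. The second limit holds because $p\mapsto\|z\|_p$ is continuous and the sup defining $\|P\|_p$ is over a compact set depending continuously on $p$. The constant on the branch is identically $m$. So the inequalities for $p<m$ tend to the $p=m$ one, which confirms the attachment claim.

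Finally, specialize $u=(1,\ldots,1)$. Then $\|u\|_\infty=1$ and $u^\alpha=1$ for every $\alpha$, giving $\max_{|\alpha|=m}|a_\alpha|\le m\|P\|_m$. This agrees with Corollary~\ref{whl:cor:unweighted-max} at $p=m$, namely $H^{\mathrm{pol}}_{m,m}(\C)=m$.

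The only mildly delicate point is the justification of "reaches". To stay rigorous I will state the endpoint as the direct $p=m$ instance of Proposition~\ref{whl:prop:exact-weighted-maximum}, and treat the limit only as the heuristic attachment described above.
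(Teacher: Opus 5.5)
Your proposal is correct and follows the paper: the limit is immediate from $1/r_{m,p}=(m-p)/(mp)$, the endpoint inequality is the $p=m$, $t=\infty$ case of Proposition~\ref{whl:prop:exact-weighted-maximum} (recorded at the end of the critical continuation argument), and then you take $u=(1,\ldots,1)$. Your fixed-dimension continuity check is actually rigorous, not just heuristic: $\|u\|_{r_{m,p}}\to\|u\|_\infty$ and $\|P\|_p\to\|P\|_m$ let you pass to the limit in the $p<m$ inequalities directly.
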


\section{Proof of Theorem \ref{thm:C}: coefficient norm versus sup-norm}\label{sec:proof-C}

The constants $\mathfrak A_{p,r}^m(n)$ defined below are the norms of
the finite-dimensional coefficient embeddings
\[
\mathcal P({}^m\ell_p^n)
\longrightarrow
\ell_r\bigl(\{\alpha\in\mathbb N_0^n:|\alpha|=m\}\bigr).
\]
Bounds for these embedding norms in several parameter ranges appear
in \cite[Theorem~2.1]{GalicerMansillaMuro}. Sparse unimodular
multilinear forms yield lower estimates after their variables are
placed in disjoint coordinate blocks.

For an $m$-homogeneous polynomial
\[
 P(z)=\sum_{|\nu|=m}a_\nu z^\nu
 \qquad (z\in\C^n),
\]
write
\[
 |P|_r:=\left(\sum_{|\nu|=m}|a_\nu|^r\right)^{1/r},
 \qquad
 |P|_\infty:=\max_{|\nu|=m}|a_\nu|,
\]
and let $\mathfrak A_{p,r}^m(n)$ be the least constant such that
\begin{equation}\label{eq:poly-A-def}
 |P|_r\le \mathfrak A_{p,r}^m(n)
 \|P\|_{\mathcal P({}^m\ell_p^n)}
\end{equation}
for every such polynomial.  The symbol $\mathfrak A$ distinguishes the polynomial constant from the multilinear constant $A_{p,r}^m(n)$ in \eqref{eq:Apr-def}.

Fix $2\le p\le m$ and put
\begin{equation}\label{eq:poly-hcDG}
 h:=\lceil p\rceil-1,
 \qquad
 c:=1-\frac hp,
 \qquad
 D:=m-h,
 \qquad
 G:=\frac{2m}{p}-1.
\end{equation}
Thus $D,G\ge1$ and $0<c\le1/2$.  For $2\le r\le\infty$, with the convention $1/\infty=0$, define
\begin{equation}\label{eq:beta-variational}
 \beta_{m,p}(r)
 :=\sup_{1\le d\le m}
 \left\{\frac d r-\alpha_{m,d}(p)\right\}.
\end{equation}

\begin{figure}[H]
\centering
\begin{tikzpicture}[
  x=7.8cm,y=7.8cm,
  >=Stealth,
  every node/.style={font=\footnotesize},
  regionlabel/.style={font=\small\itshape,text=axisInk!82},
  ticklabel/.style={font=\scriptsize,text=axisInk!88}
]
  % The numerical value m=5 fixes only the relative placement of the
  % symbolic breakpoints in this schematic drawing.
  \def\mfig{5}
  \pgfmathsetmacro{\xTwoM}{1/(2*\mfig)}
  \pgfmathsetmacro{\xM}{1/\mfig}
  \pgfmathsetmacro{\xMm}{1/(\mfig-1)}
  \pgfmathsetmacro{\yM}{1/\mfig}
  \pgfmathsetmacro{\yTop}{(\mfig+1)/(2*\mfig)}

  % GMM regions: muted colors, preserving the visual separation of the
  % original phase diagram without competing with the highlighted region.
  \fill[gmmGold!28]   (0,0)--(\xM,0)--(\xTwoM,.5)--(0,\yTop)--cycle;             % A
  \fill[gmmRose!27]   (\xTwoM,.5)--(\xM,0)--(\xM,.5)--cycle;                   % B
  \fill[gmmCoral!20]  (0,\yTop)--(\xTwoM,.5)--(.5,.5)--(.5,1)--(0,1)--cycle;  % C
  \fill[gmmIndigo!17] (.5,.5)--(1,0)--(1,1)--(.5,1)--cycle;                    % D
  \fill[gmmOlive!24]  (.5,0)--(.5,.5)--(1,0)--cycle;                           % E
  \fill[gmmViolet!28] (\xM,0)--(\xM,\yM)--(\xMm,0)--cycle;                   % F

  % Region left open in GMM and determined by Theorem C.
  \fill[ourTeal!9]
      (\xM,\yM)--(\xM,.5)--(.5,.5)--(.5,0)--(\xMm,0)--cycle;
  \fill[pattern=north east lines,pattern color=ourTeal!48]
      (\xM,\yM)--(\xM,.5)--(.5,.5)--(.5,0)--(\xMm,0)--cycle;

  % Structural boundaries.
  \draw[axisInk!58,line width=.55pt] (0,\yTop)--(\xTwoM,.5);
  \draw[axisInk!58,line width=.55pt] (\xTwoM,.5)--(\xM,0);
  \draw[axisInk!58,line width=.55pt] (\xM,\yM)--(\xMm,0);
  \draw[axisInk!58,line width=.55pt] (.5,.5)--(1,0);
  \draw[axisInk!20,line width=.40pt] (0,.5)--(1,.5);
  \draw[axisInk!20,line width=.40pt] (.5,0)--(.5,1);

  % Full strip covered by Theorem C.
  \draw[ourTeal!72!black,line width=1.05pt]
      (\xM,0) rectangle (.5,.5);

  % Labels placed well inside their respective regions for m=5.
  \node[regionlabel] at (.070,.285) {(A)};
  \node[regionlabel] at (.166,.340) {(B)};
  \node[regionlabel] at (.270,.765) {(C)};
  \node[regionlabel] at (.755,.755) {(D)};
  \node[regionlabel] at (.690,.155) {(E)};
  \node[regionlabel,font=\footnotesize\itshape] at (.216,.070) {(F)};

  % A discreet note makes explicit how m=5 enters the drawing.
  \node[anchor=south east,font=\scriptsize,text=axisInk!72]
      at (1,1.018) {schematic ($m=5$)};

  % Frame and axes.
  \draw[axisInk!88,line width=.72pt] (0,0) rectangle (1,1);
  \draw[axisInk!88,line width=.78pt,->] (-.032,0)--(1.035,0)
      node[below right=-1pt,font=\small] {$1/p$};
  \draw[axisInk!88,line width=.78pt,->] (0,-.032)--(0,1.035)
      node[above left=-1pt,font=\small] {$1/r$};

  % Symbolic ticks.  Their positions are drawn with m=5, but the labels are
  % the exact symbolic breakpoints for general m.
  \foreach \x in {\xTwoM,\xM,\xMm,.5,1}
      \draw[axisInk!78,line width=.48pt] (\x,.008)--(\x,-.008);
  \node[ticklabel,below=2pt]  at (\xTwoM,0) {$\frac1{2m}$};
  \node[ticklabel,below=2pt]  at (\xM,0) {$\frac1m$};
  \node[ticklabel,below=12pt] at (\xMm,0) {$\frac1{m-1}$};
  \node[ticklabel,below=2pt]  at (.5,0) {$\frac12$};
  \node[ticklabel,below=2pt]  at (1,0) {$1$};

  \foreach \y in {\yM,.5,\yTop,1}
      \draw[axisInk!78,line width=.48pt] (.008,\y)--(-.008,\y);
  \node[ticklabel,left=3pt] at (0,\yM) {$\frac1m$};
  \node[ticklabel,left=3pt] at (0,.5) {$\frac12$};
  \node[ticklabel,left=3pt] at (0,\yTop) {$\frac{m+1}{2m}$};
  \node[ticklabel,left=3pt] at (0,1) {$1$};
\end{tikzpicture}
\caption{Coefficient-embedding regions in the $(1/p,1/r)$-plane.
The picture is schematic and uses $m=5$ only to locate the displayed
breakpoints.}
\label{fig:GMM-completed-region}
\end{figure}

\begin{proof}
Fix $d\in[1,m]$ and $\eta>0$.  By Theorem~\ref{thm:A}, for all sufficiently large $k$ there is an $m$-linear form
\[
 T_k(x^{(1)},\ldots,x^{(m)})
 =\sum_{\mathbf i\in\Lambda_k}
 \eps_{\mathbf i}x^{(1)}_{i_1}\cdots x^{(m)}_{i_m},
 \qquad |\eps_{\mathbf i}|=1,
\]
with
\begin{equation}\label{eq:poly-app-T-size}
 |\Lambda_k|\ge \lfloor k^d\rfloor,
 \qquad
 \|T_k\|_{\mathcal L({}^m\ell_p^k)}
 \le C_\eta k^{\alpha_{m,d}(p)+\eta}.
\end{equation}
Split $\C^{mk}$ into $m$ consecutive blocks of length $k$ and define
\begin{equation}\label{eq:block-polynomial}
 P_k(z):=T_k(z^{(1)},\ldots,z^{(m)}),
 \qquad
 z=(z^{(1)},\ldots,z^{(m)})\in\C^{mk}.
\end{equation}
Every coefficient of $T_k$ becomes a distinct degree-$m$ monomial of $P_k$, since exactly one variable is taken from each block.  Hence
\begin{equation}\label{eq:block-coeff-norm}
 |P_k|_r=|\Lambda_k|^{1/r}
 \ge \lfloor k^d\rfloor^{1/r}
 \gtrsim_{d,r} k^{d/r},
\end{equation}
with the usual convention $a^0=1$ when $r=\infty$.
If $\|z\|_{\ell_p^{mk}}\le1$ and $a_j:=\|z^{(j)}\|_p$, then
\[
 \sum_{j=1}^m a_j^p\le1.
\]
The arithmetic-geometric mean inequality gives
\[
 \prod_{j=1}^m a_j
 \le\left(\frac1m\sum_{j=1}^m a_j^p\right)^{m/p}
 \le m^{-m/p}.
\]
Therefore
\begin{equation}\label{eq:block-poly-norm}
 \|P_k\|_{\mathcal P({}^m\ell_p^{mk})}
 \le m^{-m/p}\|T_k\|_{\mathcal L({}^m\ell_p^k)}.
\end{equation}
Combining \eqref{eq:poly-app-T-size}--\eqref{eq:block-poly-norm},
\begin{equation}\label{eq:poly-lower-fixed-d}
 \mathfrak A_{p,r}^m(mk)
 \gtrsim_{m,p,r,d,\eta}
 k^{d/r-\alpha_{m,d}(p)-\eta}.
\end{equation}
For arbitrary large $n$, take $k=\lfloor n/m\rfloor$ and extend $P_k$ by zero to $n$ variables.  Since $k\asymp_m n$, \eqref{eq:poly-lower-fixed-d} implies
\[
 \liminf_{n\to\infty}
 \frac{\log\mathfrak A_{p,r}^m(n)}{\log n}
 \ge \frac d r-\alpha_{m,d}(p)-\eta.
\]
Taking first $\eta\downarrow0$ and then the supremum over $d$ gives
\begin{equation}\label{eq:poly-liminf-beta}
 \liminf_{n\to\infty}
 \frac{\log\mathfrak A_{p,r}^m(n)}{\log n}
 \ge\beta_{m,p}(r).
\end{equation}

For the upper bound, let $P$ be $m$-homogeneous on $\ell_p^n$ and let $L:(\ell_p^n)^m\longrightarrow\mathbb C$ be its associated symmetric $m$-linear form.  The coefficient norms of $P$ and $L$, and their supremum and operator norms, are equivalent up to constants depending only on $m$; see \cite[(5)--(6)]{GalicerMansillaMuro}.  Consequently these coefficient estimates transfer to polynomials without changing the power of the dimension.

At $r=\infty$, evaluation of $L$ on basis vectors gives
\begin{equation}\label{eq:poly-endpoint-inf}
 \mathfrak A_{p,\infty}^m(n)\lesssim_m1.
\end{equation}
At $r=2$, Lemma~\ref{lem:l2-critical} with $q_1=\cdots=q_m=2m$, together with
$\|L\|_{\cL({}^m\ell_{2m}^n)}\le n^{m/p-1/2}\|L\|_{\cL({}^m\ell_p^n)}$, gives
\begin{equation}\label{eq:poly-endpoint-two}
 \mathfrak A_{p,2}^m(n)
 \lesssim_{m,p}n^{m/p-1/2}
 =n^{G/2}.
\end{equation}
Finally, set $r_0:=1/c=p/(p-h)$.  Lemma~\ref{lem:Bayart-sectional} gives
\begin{equation}\label{eq:poly-endpoint-sectional}
 \mathfrak A_{p,r_0}^m(n)
 \lesssim_{m,p}n^{(m-h)/r_0}
 =n^{Dc}.
\end{equation}
Interpolation of finite-dimensional coefficient norms now gives two available upper envelopes.  Interpolating \eqref{eq:poly-endpoint-inf} and \eqref{eq:poly-endpoint-two} yields
\begin{equation}\label{eq:poly-upper-global-line}
 \mathfrak A_{p,r}^m(n)\lesssim_{m,p,r}n^{G/r}
 \qquad(2\le r\le\infty).
\end{equation}
For $0\le x=1/r\le c$, interpolation between $r_0$ and $\infty$ gives
\begin{equation}\label{eq:poly-upper-section-line1}
 \mathfrak A_{p,r}^m(n)\lesssim n^{Dx},
\end{equation}
while for $c\le x\le1/2$, interpolation between $r_0$ and $2$ gives
\begin{equation}\label{eq:poly-upper-section-line2}
 \mathfrak A_{p,r}^m(n)
 \lesssim
 n^{Dc+\frac{x-c}{1/2-c}(G/2-Dc)}
\end{equation}
when $c<1/2$.  If $p=2$, then $c=1/2$ and the two finite endpoints coincide.

It remains to evaluate \eqref{eq:beta-variational}.\begin{equation}\label{eq:alpha-DG-form}
 \alpha_{m,d}(p)
 =\max\left\{0,\frac{d-G}{2},c(d-D)_+\right\}.
\end{equation}
Suppose first that $D\ge G$.  Since $c\le1/2$, the sectional affine function never exceeds $(d-G)/2$ once it becomes positive.  Thus $\alpha_{m,d}(p)=0$ for $1\le d\le G$ and $\alpha_{m,d}(p)=(d-G)/2$ for $G\le d\le m$.  As $x=1/r\le1/2$, the function
\[
 d\longmapsto dx-\alpha_{m,d}(p)
\]
is increasing up to $d=G$ and nonincreasing afterwards.  Hence
\[
 \beta_{m,p}(r)=Gx=\frac Gr,
\]
which agrees with the upper bound \eqref{eq:poly-upper-global-line}.

Assume now that $D<G$.  Then $p>2$, so $c<1/2$.  The sectional line starts at $d=D$, the global line starts at $d=G$, and they meet at the value $d_*$ in \eqref{eq:dstar-def}.  Moreover
\begin{equation}\label{eq:dstar-range}
 D<G<d_*\le m.
\end{equation}
Indeed, using $d_*=(G-2cD)/(1-2c)$ and $0<c<1/2$, we have
\begin{align*}
 d_*-G
 &=\frac{G-2cD-G(1-2c)}{1-2c}\\
 &=\frac{2c(G-D)}{1-2c}>0,
\end{align*}
so $d_*>G$.  For the upper bound, $d_*\le m$ is equivalent to
\[
 G-2cD\le m(1-2c).
\]
Substituting
\[
 G=\frac{2m}{p}-1,\qquad
 c=1-\frac hp,\qquad
 D=m-h,
\]
and multiplying by $p>0$, this becomes
\[
 p(m+1-2h)+2h^2-2m\ge0.
\]
Set
\[
 E(p):=p(m+1-2h)+2h^2-2m.
\]
Since $h=\lceil p\rceil-1$, we have $h\le p\le h+1$; moreover $D<G$ implies $p>2$, hence $h\ge2$.  The function $E$ is affine on $[h,h+1]$, and its endpoint values are
\begin{align*}
 E(h)&=h(m+1)-2m=m(h-2)+h\ge0,\\
 E(h+1)&=(h-1)(m-1)\ge0.
\end{align*}
Therefore $E(p)\ge0$ throughout $[h,h+1]$, so $d_*\le m$.  Hence
\begin{equation}\label{eq:alpha-three-piece}
 \alpha_{m,d}(p)=
 \begin{cases}
 0,&1\le d\le D,\\
 c(d-D),&D\le d\le d_*,\\
 (d-G)/2,&d_*\le d\le m.
 \end{cases}
\end{equation}
The slopes of $dx-\alpha_{m,d}(p)$ on these three intervals are, respectively,
\[
 x,\qquad x-c,\qquad x-\frac12.
\]
If $0\le x\le c$, the maximum is attained at $d=D$, and therefore $\beta_{m,p}(r)=Dx$.  If $c\le x\le1/2$, the maximum is attained at $d=d_*$, and
\begin{align*}
 \beta_{m,p}(r)
 &=d_*x-c(d_*-D)\\
 &=Dc+\frac{x-c}{1/2-c}
 \left(\frac G2-Dc\right),
\end{align*}
where the second identity uses
\[
 c(d_*-D)=\frac{d_*-G}{2}.
\]
This proves \eqref{eq:beta-case-DltG}.  The upper bounds \eqref{eq:poly-upper-section-line1}--\eqref{eq:poly-upper-section-line2} now match \eqref{eq:poly-liminf-beta}, and \eqref{eq:poly-A-limit} follows.
\end{proof}

\begin{remark}\label{rem:Steiner-comparison}
For example, let $m=5$, $p=3$ and $r=4$.  Then $G=7/3$ and $D=3$, so Theorem~\ref{thm:C} gives
\[
 \mathfrak A_{3,4}^5(n)=n^{7/12+o(1)}.
\]
The maximizing support-growth dimension is $d=G=7/3$, for which $\alpha_{5,7/3}(3)=0$.  By comparison, a partial Steiner support has order $n^{m-1}=n^4$ coefficients and the estimate of \cite[Theorem~1.5]{GalicerMansillaMuro} gives, up to a logarithmic factor, norm $n^{5(1/2-1/3)}=n^{5/6}$.  At $r=4$ this produces only the lower power $n^{1/6}$.  Thus allowing the support size to have a fractional exponent changes the extremal lower bound from $1/6$ to the sharp value $7/12$.
\end{remark}

\section{Proof of Theorem \ref{thm:D}: sharp diagonal weight exponents}\label{sec:proof-D}

\subsection{Finite-exponent source branches}

The source parameter $s$ also produces finite coefficient exponents below the critical ambient value $p=m$.

\begin{corollary}[Lower Hardy--Littlewood source range]\label{whl:cor:lower-source}
Let $m<s<2m$ and $1\le p\le s$. Then
\begin{equation}\label{whl:eq:lower-source}
 \left(\sum_{|\alpha|=m}|a_\alpha u^\alpha|^{\frac{s}{s-m}}\right)^{\frac{s-m}{s}}
 \le
 H^{\mathrm{pol}}_{m,s}(\C)
 \|u\|_{\frac{ps}{s-p}}^m\|P\|_p,
\end{equation}
with the convention $ps/(s-p)=\infty$ when $p=s$.
\end{corollary}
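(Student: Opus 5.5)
This is a direct specialization of the transfer inequality \eqref{whl:eq:intro-transfer-functional} in Theorem~\ref{thm:B}, so the plan is simply to identify the exponents and apply the diagonal-operator argument of Section~\ref{sec:proof-B}.

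First, fix $m<s<2m$ and $1\le p\le s$. By the definition of $q(m,s)$ in this range, $q(m,s)=s/(s-m)$, so the left-hand side of \eqref{whl:eq:lower-source} is exactly $\|(a_\alpha u^\alpha)_{|\alpha|=m}\|_{\ell_{q(m,s)}}$. Second, the weight exponent is $r(p,s)$, determined by $1/r=1/p-1/s=(s-p)/(ps)$. This gives $r=ps/(s-p)$, with $r=\infty$ when $p=s$, which matches the stated convention.

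Next, I would repeat the transfer argument with $Q=P\circ D_u$, where $D_u:\ell_s^n\to\ell_p^n$. H\"older's inequality \eqref{whl:eq:Du} and $m$-homogeneity give $\|Q\|_s\le\|u\|_r^m\|P\|_p$. The coefficients of $Q$ are $a_\alpha u^\alpha$. Applying the classical polynomial Hardy--Littlewood inequality \eqref{whl:eq:classical-HL} on $\ell_s^n$ with exponent $q(m,s)=s/(s-m)$ then yields \eqref{whl:eq:lower-source} with constant $H^{\mathrm{pol}}_{m,s}(\C)$. Equivalently, one can take \eqref{whl:eq:intro-transfer-functional} and rescale $u$ by homogeneity to remove the normalization $\|u\|_r\le1$. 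The case $u=0$ is trivial.

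No step is a genuine obstacle. The only point needing care is the finiteness of $H^{\mathrm{pol}}_{m,s}(\C)$ for $m<s<2m$. This is the classical polynomial Hardy--Littlewood inequality in the range $m<s<2m$ (Aron--N\'u\~nez--Pellegrino--Serrano, Praciano-Pereira), which the paper has already assumed when defining \eqref{whl:eq:classical-HL}. So I would only cite it.
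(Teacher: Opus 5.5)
Your proposal is correct and follows the paper's own proof, which is simply to apply Theorem~\ref{thm:B} with $q(m,s)=s/(s-m)$ and the weight exponent $r(p,s)=ps/(s-p)$. Your added remarks on removing the normalization $\|u\|_r\le1$ by homogeneity and on the finiteness of $H^{\mathrm{pol}}_{m,s}(\C)$ are accurate and fill in the only implicit details.
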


\begin{proof}
Apply Theorem~\ref{thm:B} and use $q(m,s)=s/(s-m)$.
\end{proof}

\begin{corollary}[Quadratic source]\label{whl:cor:l2}
Let $1\le p\le2m$. Then
\begin{equation}\label{whl:eq:l2-branch}
 \left(\sum_{|\alpha|=m}|a_\alpha u^\alpha|^2\right)^{1/2}
 \le
 H^{\mathrm{pol}}_{m,2m}(\C)
 \|u\|_{\frac{2mp}{2m-p}}^m\|P\|_p,
\end{equation}
with the usual convention at $p=2m$.
\end{corollary}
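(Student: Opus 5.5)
This is a direct specialization of the transfer inequality \eqref{whl:eq:intro-transfer-functional} in Theorem~\ref{thm:B}, with the source exponent $s=2m$. That is the value at which the Hardy--Littlewood exponent is quadratic.

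First I check the exponent. The formula for $q(m,s)$ on the range $2m\le s<\infty$ gives
\[
 q(m,2m)=\frac{2m\cdot 2m}{m\cdot 2m+2m-2m}=\frac{4m^2}{2m^2}=2 .
\]
This is consistent with the lower branch $s/(s-m)$, which also equals $2$ at $s=2m$. So the classical inequality \eqref{whl:eq:classical-HL} at $s=2m$ is an $\ell_2$ coefficient estimate with optimal constant $H^{\mathrm{pol}}_{m,2m}(\C)$.

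Next I fix the weight exponent. For $1\le p\le 2m$, the relation \eqref{whl:eq:rps-intro} with $s=2m$ gives
\[
 \frac1{r(p,2m)}=\frac1p-\frac1{2m}=\frac{2m-p}{2mp},
\]
so $r=2mp/(2m-p)$, and $r=\infty$ when $p=2m$. This is the stated convention.

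Finally, I apply the diagonal transfer. Given $u\in\ell_r^n$, Hölder's inequality gives $\|D_u\|_{\ell_{2m}^n\to\ell_p^n}\le\|u\|_r$. Hence the polynomial $Q=P\circ D_u$, whose coefficients are $a_\alpha u^\alpha$, satisfies $\|Q\|_{2m}\le\|u\|_r^m\|P\|_p$. Applying \eqref{whl:eq:classical-HL} to $Q$ on $\ell_{2m}^n$ yields \eqref{whl:eq:l2-branch}. Equivalently, one can use the displayed inequality preceding Corollary~\ref{whl:cor:classical-diagonal} and remove the normalization $\|u\|_r\le1$ by homogeneity.

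There is no genuine obstacle here. The only point needing care is confirming that $q(m,2m)=2$ from the correct branch of the piecewise definition, together with the endpoint convention at $p=2m$. At that endpoint $r=\infty$, and the statement reduces to Corollary~\ref{whl:cor:classical-diagonal}-type monotonicity in $u$.
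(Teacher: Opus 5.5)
Your proposal is correct and matches the paper's proof, which simply takes $s=2m$ in Theorem~\ref{thm:B}. Your checks that $q(m,2m)=2$ and $r(p,2m)=2mp/(2m-p)$, together with the homogeneity step, are exactly what that specialization requires.
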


\begin{proof}
Take $s=2m$ in Theorem~\ref{thm:B}.
\end{proof}

\begin{corollary}[Bohnenblust--Hille source]\label{whl:cor:BH}
Let $1\le p\le\infty$. Then
\begin{equation}\label{whl:eq:BH-branch}
 \left(\sum_{|\alpha|=m}|a_\alpha u^\alpha|^{\frac{2m}{m+1}}\right)^{\frac{m+1}{2m}}
 \le
 D_m\|u\|_p^m\|P\|_p,
\end{equation}
where $D_m=H^{\mathrm{pol}}_{m,\infty}(\C)$.
\end{corollary}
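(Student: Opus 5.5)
This is the case $s=\infty$ of Theorem~\ref{thm:B}, and I would prove it by specializing that transfer. With $s=\infty$ the defining relation \eqref{whl:eq:rps-intro} reads $1/r(p,\infty)=1/p-1/\infty=1/p$, so the weight exponent is $r(p,\infty)=p$. The source exponent is $q(m,\infty)=2m/(m+1)$, the Bohnenblust--Hille exponent. The hypothesis $1\le p\le s$ of Theorem~\ref{thm:B} holds for every $1\le p\le\infty$.

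The key steps, in order, are as follows.

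\begin{enumerate}[label=\textup{(\arabic*)}]
\item Fix $u\in\C^n$ and form $Q(z)=P(D_u z)=\sum_{|\alpha|=m}a_\alpha u^\alpha z^\alpha$ with $D_u:\ell_\infty^n\to\ell_p^n$.
\item Apply H\"older's inequality \eqref{whl:eq:Du}, which gives $\|D_u z\|_p\le\|u\|_p\|z\|_\infty$.
\item By $m$-homogeneity of $P$, deduce $\|Q\|_\infty\le\|u\|_p^m\|P\|_p$.
\item Apply the polynomial Bohnenblust--Hille inequality, i.e.\ \eqref{whl:eq:classical-HL} at $s=\infty$ with optimal constant $H^{\mathrm{pol}}_{m,\infty}(\C)=D_m$, to $Q$ on $\ell_\infty^n$.
\end{enumerate}

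Step (4) yields exactly \eqref{whl:eq:BH-branch}, since the coefficient sequence of $Q$ is $(a_\alpha u^\alpha)_{|\alpha|=m}$. Equivalently, one can simply read off the $s=\infty$ instance of \eqref{whl:eq:intro-transfer-functional} and use homogeneity in $u$ to drop the normalization $\|u\|_p\le1$. The case $u=0$ is trivial.

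The only point needing care is the endpoint $p=\infty$. There $D_u:\ell_\infty^n\to\ell_\infty^n$ and $\|D_u\|=\|u\|_\infty$, so the estimate degenerates to the classical Bohnenblust--Hille inequality when $u=(1,\ldots,1)$. No genuine obstacle arises. The constant $D_m$ is dimension-free because $H^{\mathrm{pol}}_{m,\infty}(\C)$ is, as recorded in \cite{BohnenblustHille1931,DefantEtAl2011}.
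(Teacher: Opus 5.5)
Your proposal is correct and is exactly the paper's argument: the corollary is the $s=\infty$ instance of Theorem~\ref{thm:B}, with $r(p,\infty)=p$ and $q(m,\infty)=2m/(m+1)$. Your unpacking of the transfer (H\"older's inequality for $D_u:\ell_\infty^n\to\ell_p^n$, $m$-homogeneity, then the polynomial Bohnenblust--Hille inequality applied to $P\circ D_u$) simply reproduces the proof of the transfer assertion given in Section~\ref{sec:proof-B}.
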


\begin{proof}
Take $s=\infty$ in Theorem~\ref{thm:B}.
\end{proof}

For fixed $p<m$, increasing $s$ from $m$ to $\infty$ moves continuously through the classical source scale: $s=m$ gives the maximum norm, $s=2m$ gives the coefficient $\ell_2$ norm, and $s=\infty$ gives the Bohnenblust--Hille exponent. The corresponding diagonal weight is always determined by
\[
 \frac1r=\frac1p-\frac1s.
\]

\subsection{The strip $1\le q\le2$}

We first record the unimodular polynomials used for necessity. A result of Bayart \cite{Bayart2012}, in the form recalled in \cite{GalicerMansillaMuro}, gives the following: for each $m\ge2$, $2\le p\le\infty$, and $n\ge1$, there is an $m$-homogeneous polynomial
\begin{equation}\label{whl:eq:unimodular-poly}
 U_n(z)=\sum_{|\alpha|=m}\varepsilon_\alpha z^\alpha,
 \qquad |\varepsilon_\alpha|=1,
\end{equation}
with
\begin{equation}\label{whl:eq:unimodular-norm}
 \|U_n\|_p
 \le K_{m,p}\,
 n^{m(\frac12-\frac1p)+\frac12},
\end{equation}
where $K_{m,p}$ is independent of $n$.

We prove sufficiency in two ranges. Suppose first that $q_m\le q\le2$. There is a unique $s\in[2m,\infty]$ such that
\begin{equation}\label{whl:eq:s-from-q-upper}
 \frac1q=\frac{m+1}{2m}-\frac1s.
\end{equation}
Since $p\le m\le s$, Theorem~\ref{thm:B} applies. Moreover,
\[
 \frac1p-\frac1s
 =\frac1p+\frac1q-\frac{m+1}{2m}
 =\frac1{t_*},
\]
so \eqref{whl:eq:sharp-strip-intro} follows.

Now let $1\le q<q_m$ and choose $h\in(0,\infty)$ by
\begin{equation}\label{whl:eq:h-factor}
 \frac1q=\frac1{q_m}+\frac1h.
\end{equation}
Equation \eqref{whl:eq:tstar-intro} becomes
\[
 \frac1{t_*}=\frac1p+\frac1h.
\]
Fix $u\in\C^n$. Replacing $u$ by $|u|$ does not change the left-hand side. The standard coordinatewise factorization $\ell_{t_*}=\ell_p\cdot\ell_h$ gives nonnegative vectors $v,w$ such that
\[
 |u_j|=v_jw_j,
 \qquad
 \|v\|_p\|w\|_h=\|u\|_{t_*}.
\]
For instance, when $u\neq0$ one may take
\[
 v_j=|u_j|^{t_*/p},
 \qquad
 w_j=|u_j|^{t_*/h},
\]
and then $\|v\|_p\|w\|_h=\|u\|_{t_*}$. H\"older's inequality in the coefficient index, \eqref{whl:eq:BH-branch}, and the multinomial theorem give
\begin{align*}
 \left(\sum_{|\alpha|=m}|a_\alpha u^\alpha|^q\right)^{1/q}
 &\le
 \left(\sum_{|\alpha|=m}|a_\alpha v^\alpha|^{q_m}\right)^{1/q_m}
 \left(\sum_{|\alpha|=m}|w^\alpha|^h\right)^{1/h}\\
 &\le
 D_m\|v\|_p^m\|P\|_p\,\|w\|_h^m\\
 &=D_m\|u\|_{t_*}^m\|P\|_p.
\end{align*}
Here
\[
 \sum_{|\alpha|=m}|w^\alpha|^h
 \le \left(\sum_{j=1}^n|w_j|^h\right)^m.
\]
This proves sufficiency for all $1\le q\le2$.

For necessity, suppose that for some $t\in[1,\infty]$ and $C>0$ independent of $n$,
\begin{equation}\label{whl:eq:hyp-sharp-strip}
 \left(\sum_{|\alpha|=m}|a_\alpha u^\alpha|^q\right)^{1/q}
 \le C\|u\|_t^m\|P\|_p
\end{equation}
holds for every $n,P,u$. Apply it to $U_n$ from \eqref{whl:eq:unimodular-poly} and to
\[
 u^{(n)}=n^{-1/t}(1,\ldots,1),
\]
with the usual interpretation when $t=\infty$. Since
\[
 \#\{\alpha\in\Nzero^n:|\alpha|=m\}
 =\binom{n+m-1}{m}\asymp_m n^m,
\]
we obtain
\[
 n^{m/q-m/t}
 \lesssim_{m,p,q}
 n^{m(\frac12-\frac1p)+\frac12}.
\]
Hence
\[
 \frac1t
 \ge
 \frac1p+\frac1q-\frac{m+1}{2m}
 =\frac1{t_*}.
\]
Thus $t\le t_*$, and part~\textup{(i)} of Theorem~\ref{thm:D} is proved.

\begin{remark}\label{whl:rem:q1-monomial}
The endpoint $q=1$ belongs to the broader theory of coefficient
multipliers and monomial convergence for homogeneous polynomials and
holomorphic functions; see
\cite{BayartDefantFrerickMaestreSevillaPeris2017,
BayartDefantSchlueters2019,
GalicerMansillaMuroSevillaPeris2021}.
In that literature the central question is typically absolute monomial
convergence, whereas Theorem~\ref{thm:D} places this endpoint inside a finite
$\ell_q$ scale and asks for the sharp dimension-free $\ell_t$
condition on the diagonal weight.
\end{remark}

\subsection{The critical line $p=m$}

For $1\le q\le2$, part~\textup{(ii)} follows directly from
part~\textup{(i)} with $p=m$.

Let $2\le q<\infty$ and set
\[
 s:=\frac{mq}{q-1}.
\]
Then $m<s\le2m$ and
\[
 q=\frac{s}{s-m}.
\]
Theorem~\ref{thm:B} with ambient exponent $p=m$ gives
\[
 \left\|(a_\alpha u^\alpha)_{|\alpha|=m}\right\|_{\ell_q}
 \le
 H^{\mathrm{pol}}_{m,s}(\C)
 \|u\|_{mq}^m\|P\|_m,
\]
because
\[
 \frac1m-\frac1s=\frac1{mq}.
\]
For $q=\infty$, Proposition~\ref{whl:prop:exact-weighted-maximum} with $p=m$ and $t=\infty$ gives the required estimate.

It remains to prove optimality of the weight exponent for $q\ge2$. Let
\[
 P_n(z):=\sum_{j=1}^n z_j^m.
\]
Since $\|P_n\|_m=1$, applying a hypothetical dimension-free estimate with weight exponent $t$ to $u^{(n)}=n^{-1/t}(1,\ldots,1)$ gives
\[
 n^{1/q-m/t}\le C.
\]
Therefore $1/t\ge1/(mq)$, or $t\le mq$. For $q=\infty$ this places no restriction, exactly as \eqref{whl:eq:rho-critical-intro} prescribes. This proves part~\textup{(ii)} of Theorem~\ref{thm:D}.

\subsection{The region $q>2$}

The direct source-transfer curve from Theorem~\ref{thm:B} is not the strongest estimate available when $p<m$ and $q>2$. The quadratic branch and the exact maximum-coefficient estimate can be combined without any interpolation theorem for polynomial spaces.

\begin{proposition}[Quadratic-to-maximum interpolation]\label{whl:prop:qgt2}
Let $m\ge2$, $2\le p\le m$, and $2\le q\le\infty$. Put
\[
 \theta:=\frac2q
\]
with $\theta=0$ for $q=\infty$, and define $\sigma=\sigma(m,p,q)$ by
\begin{equation}\label{whl:eq:sigma-qgt2}
 \frac1\sigma
 =\frac2q\left(\frac1p-\frac1{2m}\right).
\end{equation}
Then
\begin{equation}\label{whl:eq:qgt2-interp}
 \left\|(a_\alpha u^\alpha)_{|\alpha|=m}\right\|_{\ell_q}
 \le
 \bigl(H^{\mathrm{pol}}_{m,2m}(\C)\bigr)^\theta
 m^{\frac{m}{p}(1-\theta)}
 \|u\|_\sigma^m\|P\|_p.
\end{equation}
For $p<m$ and $q>2$, \eqref{whl:eq:qgt2-interp} allows a strictly larger weight exponent than the direct source-transfer estimate with the same coefficient exponent. For finite $q>2$, the exponent $\sigma$ is optimal when $p=2$ and when $p=m$. At $q=\infty$, one has $\sigma=\infty$, and optimality holds for every $2\le p\le m$ by Proposition~\ref{whl:prop:exact-weighted-maximum}.
\end{proposition}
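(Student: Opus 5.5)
The estimate \eqref{whl:eq:qgt2-interp} follows from log-convexity of $\ell_q$ norms on the coefficient sequence, combined with a splitting of the weight. Write $b_\alpha:=a_\alpha u^\alpha$. For $2\le q<\infty$ we have the elementary inequality
\[
 \|b\|_{\ell_q}\le\|b\|_{\ell_2}^{\theta}\|b\|_{\ell_\infty}^{1-\theta},\qquad \theta=\frac2q,
\]
since $\sum|b_\alpha|^q\le\|b\|_\infty^{q-2}\sum|b_\alpha|^2$. So no interpolation of polynomial spaces is needed. We cannot, however, use the same weight $u$ in both factors, because the admissible weights differ. The quadratic branch (Corollary~\ref{whl:cor:l2}) requires $\|u\|_{t_2}$ with $1/t_2=1/p-1/(2m)$. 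The maximum branch (Proposition~\ref{whl:prop:exact-weighted-maximum} with $t=\infty$, since $p\le m<\infty$ gives constant $m^{m/p}$) requires only $\|u\|_\infty$.

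The plan is therefore to factor the weight. Take $u\ge0$ without loss of generality and $\|u\|_\sigma=1$. Set $v_j:=u_j^{\sigma/t_2}$, so that $\|v\|_{t_2}=1$, and define $w$ by $u=v^{\theta}w^{1-\theta}$. Then $b_\alpha=(a_\alpha v^\alpha)^{\theta}(a_\alpha w^\alpha)^{1-\theta}$ up to moduli. Applying Hölder with exponents $2/\theta=q$ and $\infty$ gives
\[
 \|b\|_q\le\Bigl(\sum|a_\alpha v^\alpha|^2\Bigr)^{\theta/2}\Bigl(\max|a_\alpha w^\alpha|\Bigr)^{1-\theta}.
\]
Since $1/\sigma=\theta/t_2$, the natural choice is $w=\mathbf 1$ restricted to $\supp u$. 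Then $v^{\theta}=u$ exactly, because $u_j^{\sigma\theta/t_2}=u_j$. One must check that $u_j\le1$, so that $w^\alpha\le1$ wherever needed. This holds because $\|u\|_\infty\le\|u\|_\sigma=1$. Combining Corollary~\ref{whl:cor:l2} for the first factor with Corollary~\ref{whl:cor:unweighted-max} (or the weighted maximum with $w\le1$) for the second yields the constant $(H^{\mathrm{pol}}_{m,2m})^\theta m^{\frac mp(1-\theta)}$. The case $q=\infty$ is exactly Proposition~\ref{whl:prop:exact-weighted-maximum} with $t=\infty$.

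For the comparison claim, the direct source transfer at coefficient exponent $q>2$ uses $s$ with $q=s/(s-m)$, i.e. $1/s=(q-1)/(mq)$, and hence weight $1/r=1/p-1/m+1/(mq)$. We need to show $1/\sigma<1/r$, that is,
\[
 \frac2q\Bigl(\frac1p-\frac1{2m}\Bigr)<\frac1p-\frac1m+\frac1{mq}.
\]
Multiplying out, this is $(1-2/q)(1/p-1/m)>0$, which holds precisely when $p<m$ and $q>2$.

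Optimality uses test polynomials together with flat weights $u=n^{-1/t}\mathbf 1$. At $p=m$, the polynomial $P_n=\sum z_j^m$ forces $1/t\ge1/(mq)=1/\sigma$, as on the critical line. At $p=2$, we use the unimodular $U_n$ from \eqref{whl:eq:unimodular-norm}, whose norm is $\lesssim n^{1/2}$. This gives $n^{m/q-m/t}\lesssim n^{1/2}$, i.e. $1/t\ge1/q-1/(2m)$, and at $p=2$ the right side equals $\frac2q(\frac12-\frac1{2m})=1/\sigma$. At $q=\infty$ the exponent $\sigma=\infty$ is trivially largest. The main obstacle I foresee is the bookkeeping in the weight splitting, particularly confirming $w\le1$ on the support. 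The sharpness calculations are routine.
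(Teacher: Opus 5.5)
Your derivation of \eqref{whl:eq:qgt2-interp} is correct and coincides with the paper's proof. You set $v_j=u_j^{1/\theta}$ so that the whole weight sits in the quadratic factor, apply H\"older with exponents $q$ and $\infty$ to $(|a_\alpha|v^\alpha)^\theta|a_\alpha|^{1-\theta}$, and finish with Corollaries~\ref{whl:cor:l2} and~\ref{whl:cor:unweighted-max}. Your check $u_j\le1$ is superfluous, since $w$ is an indicator. The comparison with $t_{\mathrm{src}}$ and the optimality at $p=m$ via $\sum_j z_j^m$ are also correct.

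The gap is the optimality at $p=2$ for finite $q>2$. You assert $\frac2q\bigl(\frac12-\frac1{2m}\bigr)=\frac1q-\frac1{2m}$, but the left side equals $\frac1q-\frac1{mq}=\frac{m-1}{mq}$. Your test object is the full-support polynomial $U_n$ with $\|U_n\|_2\lesssim n^{1/2}$. It only yields $\frac1t\ge\frac1q-\frac1{2m}$, and
\[
 \frac1\sigma-\Bigl(\frac1q-\frac1{2m}\Bigr)=\frac1m\Bigl(\frac12-\frac1q\Bigr)>0\qquad(q>2).
\]
So your necessary condition is strictly weaker than $t\le\sigma$; for $m=2$, $q=4$ it is empty.

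This loss is intrinsic to full supports, not a matter of a better constant. Corollary~\ref{whl:cor:l2} with a flat weight shows $\|P\|_2\gtrsim n^{1/2}$ for every unimodular polynomial with all $\asymp n^m$ coefficients active. For $q>2$, dropping from $n^m$ to $n^{m-1}$ coefficients costs only $n^{1/q}$ in the $\ell_q$ norm but can save $n^{1/2}$ in $\|\cdot\|_2$.

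The missing idea is to test with sparse unimodular supports of dimension $d=m-1$. The partial Steiner polynomials $S_n$ have at least $c_m n^{m-1}$ unimodular coefficients and $\|S_n\|_2\le C_m(\log n)^{3/2}$. With $u=n^{-1/t}(1,\ldots,1)$ they give $n^{(m-1)/q-m/t}\lesssim(\log n)^{3/2}$, hence $1/t\ge(m-1)/(mq)=1/\sigma$; this is what the paper does. Equivalently, at $p=2$ one has $h=1$ and $D=G=m-1$. Then Theorem~\ref{thm:C} gives $\beta_{m,2}(q)=(m-1)/q$, which corresponds to $d=m-1$ with $\alpha_{m,m-1}(2)=0$ in the sparse KSZ law, and Proposition~\ref{whl:prop-ksz-obstruction} yields the same bound.
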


\begin{proof}
For $q=2$, the result is exactly \cref{whl:cor:l2}. If
$q=\infty$, then $\sigma=\infty$ and
\[
 \max_{|\alpha|=m}|a_\alpha u^\alpha|
 \le
 \|u\|_\infty^m
 \max_{|\alpha|=m}|a_\alpha|
 \le
 m^{m/p}\|u\|_\infty^m\|P\|_p
\]
by \cref{whl:cor:unweighted-max}. Thus the claimed estimate also holds
at $q=\infty$.

Assume from now on that $2<q<\infty$, and let $\theta=2/q$. Set
\[
 t_2:=\frac{2mp}{2m-p},
 \qquad
 \frac1{t_2}=\frac1p-\frac1{2m}.
\]
Then \eqref{whl:eq:sigma-qgt2} is $1/\sigma=\theta/t_2$, hence $\sigma=t_2/\theta$. Replacing $u$ by $|u|$, put
\[
 v_j:=|u_j|^{1/\theta}.
\]
Then $|u^\alpha|=(v^\alpha)^\theta$ and
\[
 \|v\|_{t_2}^{\theta}=\|u\|_\sigma.
\]
For
\[
 A_\alpha:=|a_\alpha|v^\alpha,
 \qquad
 B_\alpha:=|a_\alpha|,
\]
we have $|a_\alpha u^\alpha|=A_\alpha^\theta B_\alpha^{1-\theta}$. H\"older's inequality gives
\[
 \|(a_\alpha u^\alpha)\|_{\ell_q}
 \le \|A\|_{\ell_2}^{\theta}\|B\|_{\ell_\infty}^{1-\theta}.
\]
Now use \eqref{whl:eq:l2-branch} for $A$ and \cref{whl:cor:unweighted-max} for $B$. This proves \eqref{whl:eq:qgt2-interp}.

When $p=m$, \eqref{whl:eq:sigma-qgt2} gives $1/\sigma=1/(mq)$, so optimality follows from Theorem~\ref{thm:D}\textup{(ii)}. When $p=2$, the formula gives
\[
 \frac1\sigma=\frac{m-1}{mq}.
\]
The partial Steiner-system construction of Galicer, Muro and
Sevilla-Peris \cite{GalicerMuroSevillaPeris2018} provides
$m$-homogeneous unimodular polynomials with approximately $n^{m-1}$
active coefficients and small norm; see also
\cite[Section~5.1]{BayartJEMS} and \cite{GalicerMansillaMuro}.
In particular, there are $m$-homogeneous polynomials $S_n$ with at
least $c_m n^{m-1}$ unimodular coefficients and
\[
 \|S_n\|_2\le C_m(\log n)^{3/2}.
\]
If an estimate of the form \eqref{whl:eq:qgt2-interp} held with a larger weight exponent $t>\sigma$, then applying it to $S_n$ and $u^{(n)}=n^{-1/t}(1,\ldots,1)$ would give
\[
 n^{(m-1)/q-m/t}
 \lesssim_m (\log n)^{3/2},
\]
which is impossible because $(m-1)/q-m/t>0$. Thus $t\le\sigma$.
\end{proof}

For comparison, when $2<q<\infty$ the direct source choice $m<s<2m$ gives
\[
 \frac1{t_{\mathrm{src}}}
 =\frac1p-\frac1m+\frac1{mq}.
\]
A direct calculation shows
\[
 \frac1{t_{\mathrm{src}}}-\frac1\sigma
 =\left(1-\frac2q\right)\left(\frac1p-\frac1m\right).
\]
Thus $\sigma>t_{\mathrm{src}}$ whenever $p<m$ and $q>2$, while the two exponents agree on the critical line $p=m$.

\begin{proposition}[Sparse-KSZ obstruction for diagonal weights]\label{whl:prop-ksz-obstruction}
Let $m\ge2$, $2\le p\le m$, and $2\le q\le\infty$. Suppose that for some $t\in[1,\infty]$ there is a constant $C$, independent of the dimension, such that
\begin{equation}\label{whl:eq-weighted-general-q}
 \left\|(a_\alpha u^\alpha)_{|\alpha|=m}\right\|_{\ell_q}
 \le C\|u\|_t^m\|P\|_p
\end{equation}
for every $m$-homogeneous polynomial $P$ and every $u$. Then
\begin{equation}\label{whl:eq-ksz-necessary}
 \frac1t\ge \frac{\beta_{m,p}(q)}m,
\end{equation}
where $\beta_{m,p}(q)$ is the exponent determined in Theorem~\ref{thm:C}.
\end{proposition}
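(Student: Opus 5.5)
The plan is to test the hypothesized estimate \eqref{whl:eq-weighted-general-q} against the extremal polynomials behind the lower bound in Theorem~\ref{thm:C}, with the flat weight $u^{(n)}=n^{-1/t}(1,\ldots,1)$ (read as $(1,\ldots,1)$ when $t=\infty$). With this choice $\|u^{(n)}\|_t=1$ and $u^\alpha=n^{-m/t}$ for every $|\alpha|=m$. The estimate then becomes
\[
 n^{-m/t}|P|_q\le C\|P\|_p
\]
for every $m$-homogeneous polynomial $P$ on $\C^n$. In other words, $\mathfrak A^m_{p,q}(n)\le C n^{m/t}$ for all $n$, with $C$ independent of $n$.

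Next I invoke the lower half of Theorem~\ref{thm:C}, namely \eqref{eq:poly-liminf-beta}, with $r=q\in[2,\infty]$:
\[
 \liminf_{n\to\infty}\frac{\log\mathfrak A^m_{p,q}(n)}{\log n}\ge\beta_{m,p}(q).
\]
Combining this with $\log\mathfrak A^m_{p,q}(n)\le\log C+(m/t)\log n$, dividing by $\log n$ and letting $n\to\infty$ gives $m/t\ge\beta_{m,p}(q)$, which is \eqref{whl:eq-ksz-necessary}.

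If one prefers to avoid the limit statement, the same argument can be run at fixed $d\in[1,m]$ and $\eta>0$. Use the block polynomial $P_k$ from \eqref{eq:block-polynomial}, built from the sparse form of Theorem~\ref{thm:A} on $\ell_p^k$. By \eqref{eq:block-coeff-norm} and \eqref{eq:block-poly-norm} it satisfies $|P_k|_q\gtrsim k^{d/q}$ and $\|P_k\|_p\lesssim k^{\alpha_{m,d}(p)+\eta}$. Apply the hypothesis in $n=mk$ variables with the flat weight to get
\[
 k^{d/q-m/t}\lesssim k^{\alpha_{m,d}(p)+\eta}.
\]
Letting $k\to\infty$, then $\eta\downarrow0$, and taking the supremum over $d$ gives $m/t\ge\sup_d\{d/q-\alpha_{m,d}(p)\}=\beta_{m,p}(q)$.

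There is no real obstacle here. The only points to check are the conventions at $q=\infty$ (where $d/q=0$ and $|P_k|_\infty=1$) and at $t=\infty$ (where $u=(1,\ldots,1)$). Also, the constant $C$ must not depend on $n$, so that no power of $n$ is absorbed into it. The substantive work, the sparse constructions and the computation of $\beta_{m,p}$, is already contained in Theorems~\ref{thm:A} and~\ref{thm:C}.
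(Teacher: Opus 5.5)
Your proposal is correct and follows the paper's argument: the flat weight $u^{(n)}=n^{-1/t}(1,\ldots,1)$ reduces the hypothesis to $\mathfrak A^m_{p,q}(n)\le Cn^{m/t}$, and the growth exponent from Theorem~\ref{thm:C} then forces $m/t\ge\beta_{m,p}(q)$; you rightly note that only the lower bound \eqref{eq:poly-liminf-beta} is needed. Your fixed-$d$ variant simply unpacks the proof of that lower bound and is equally valid.
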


\begin{proof}
For dimension $N$, take $u^{(N)}=N^{-1/t}(1,\ldots,1)$, with the usual interpretation when $t=\infty$. Since $|\alpha|=m$, one has $(u^{(N)})^\alpha=N^{-m/t}$. Hence \eqref{whl:eq-weighted-general-q} implies
\[
 |P|_q\le C N^{m/t}\|P\|_p.
\]
Thus $\mathfrak A_{p,q}^m(N)\le C N^{m/t}$. Theorem~\ref{thm:C} gives $\mathfrak A_{p,q}^m(N)=N^{\beta_{m,p}(q)+o(1)}$, and therefore \eqref{whl:eq-ksz-necessary} follows after taking logarithms and dividing by $\log N$.
\end{proof}

\begin{corollary}[Sharp part of the region $q>2$]\label{whl:cor-qgt2-sharp}
Let $m\ge2$, $2\le p\le m$, $2\le q\le\infty$, and put
\[
 h:=\lceil p\rceil-1,\qquad D:=m-h,\qquad G:=\frac{2m}{p}-1.
\]
If $D\ge G$, then the exponent $\sigma$ in \eqref{whl:eq:sigma-qgt2} is optimal. Equivalently,
\[
 \frac1{t_{\rm crit}}=\frac{G}{mq}
 =\frac2q\left(\frac1p-\frac1{2m}\right).
\]
\end{corollary}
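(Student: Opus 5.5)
The proof has two halves: sufficiency from Proposition~\ref{whl:prop:qgt2}, and necessity from Proposition~\ref{whl:prop-ksz-obstruction} combined with the explicit formula for $\beta_{m,p}$ in Theorem~\ref{thm:C}.

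\emph{Sufficiency.} Proposition~\ref{whl:prop:qgt2} already shows that \eqref{whl:eq-weighted-general-q} holds dimension-free with $t=\sigma$, where
\[
 \frac1\sigma=\frac2q\left(\frac1p-\frac1{2m}\right)=\frac{1}{mq}\left(\frac{2m}{p}-1\right)=\frac{G}{mq}.
\]
This uses only $2\le p\le m$ and $2\le q\le\infty$. So every $t\le\sigma$ is admissible (replace $\|u\|_\sigma$ by the larger $\|u\|_t$), and it remains to show that no $t>\sigma$ works.

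\emph{Necessity.} Suppose \eqref{whl:eq-weighted-general-q} holds with some $t$. Proposition~\ref{whl:prop-ksz-obstruction} then gives
\[
 \frac1t\ge\frac{\beta_{m,p}(q)}{m}.
\]
Under the hypothesis $D\ge G$, formula \eqref{eq:beta-case-DgeG} of Theorem~\ref{thm:C} gives $\beta_{m,p}(q)=G/q$ for every $2\le q\le\infty$. Hence $1/t\ge G/(mq)=1/\sigma$, that is, $t\le\sigma$. Together with sufficiency this gives $1/t_{\rm crit}=G/(mq)$, and the second displayed equality is the identity computed above.

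\emph{Endpoints and the main point of care.} At $q=\infty$ both sides give $t_{\rm crit}=\infty$, consistent with the last assertion of Proposition~\ref{whl:prop:qgt2}. The one step to check is that the lower bound in Theorem~\ref{thm:C} really delivers $\beta_{m,p}(q)=G/q$ in the case $D\ge G$. That case analysis shows $\alpha_{m,d}(p)=0$ for $1\le d\le G$, so taking $d=G$ gives exponent $G/q$. One must confirm $G\in[1,m]$, which holds because $2\le p\le m$ implies $1\le 2m/p-1\le m-1$. Then Theorem~\ref{thm:A} supplies sparse unimodular forms with about $n^{G}$ coefficients and norm $n^{o(1)}$. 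Since Proposition~\ref{whl:prop-ksz-obstruction} only needs the $\liminf$ lower bound \eqref{eq:poly-liminf-beta}, the $n^{\eta}$ loss in the sparse construction is harmless: it disappears after dividing by $\log N$ and letting $\eta\downarrow0$.
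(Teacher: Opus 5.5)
Your proof is correct and follows the paper's argument: sufficiency comes from Proposition~\ref{whl:prop:qgt2}, and necessity from Proposition~\ref{whl:prop-ksz-obstruction} together with $\beta_{m,p}(q)=G/q$ from Theorem~\ref{thm:C}. Your two added checks are accurate and make the dependence clearer than the paper's three-line proof: that $G\in[1,m]$, and that only the lower bound $\beta_{m,p}(q)\ge G/q$ is needed, which comes from $\alpha_{m,G}(p)=0$ under $D\ge G$.
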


\begin{proof}
The upper bound is Proposition~\ref{whl:prop:qgt2}. If $D\ge G$, Theorem~\ref{thm:C} gives $\beta_{m,p}(q)=G/q$. Proposition~\ref{whl:prop-ksz-obstruction} forces
\[
 \frac1t\ge\frac{G}{mq}=\frac1\sigma,
\]
so no exponent larger than $\sigma$ is admissible.
\end{proof}

\section{Weighted coefficient estimates in the unresolved range}
\label{sec:remaining}

Let
\[
2<p<m,\qquad 2<q<\infty,\qquad D<G.
\]
Suppose that, for some $t\in[1,\infty]$ and a constant $C$
independent of $n$,
\[
\left(
\sum_{|\alpha|=m}|a_\alpha u^\alpha|^q
\right)^{1/q}
\le
C\|u\|_t^m\|P\|_p
\]
holds for every $m$-homogeneous polynomial $P$ on $\ell_p^n$
and every $u\in\ell_t^n$. Taking
\[
u=n^{-1/t}(1,\ldots,1),
\]
with the usual interpretation for $t=\infty$, gives
\[
\left\|(a_\alpha)_{|\alpha|=m}\right\|_{\ell_q}
\le Cn^{m/t}\|P\|_p.
\]
Theorem~\ref{thm:C} therefore implies the necessary condition
\[
\frac1t\ge\frac{\beta_{m,p}(q)}m.
\]

Proposition~\ref{whl:prop:qgt2} gives a dimension-free weighted
coefficient estimate for the exponent determined by
\[
\frac1t
=
\frac2q\left(\frac1p-\frac1{2m}\right).
\]
These necessary and sufficient conditions do not identify the
largest admissible weight exponent throughout the stated parameter
range. The growth exponent of the unweighted coefficient embedding
is given by Theorem~\ref{thm:C}.

\section*{Use of artificial intelligence}
Generative artificial intelligence tools were used during the preparation of this manuscript to assist with exposition, organization, and the checking of intermediate calculations. The authors reviewed the resulting material and take full responsibility for the mathematical content and the final text.

\medskip

\section*{Funding}
D. Núñez-Alarcón, A. Raposo, and J. Santos are supported by the Conselho Nacional de Desenvolvimento Científico e Tecnológico (CNPq, Brazil). D. Núñez-Alarcón, A. Raposo, and J. Santos hold Grants No. 406457/2023-9 (CNPq/MCTI No. 10/2023) and No. 403964/2024-5 (MCTI/CNPq No. 16/2024). In addition, A. Raposo and J. Santos are supported by Grants No. 302341/2025-0 and No. 305655/2025-6, respectively.

\medskip

\section*{Competing interests}
The authors declare that they have no competing interests.

\section*{Data availability}
No data were used in this work.

\end{document}